\theoremstyle{plain}
\newtheorem{thm}{Theorem}[section]
\newtheorem{lem}[thm]{Lemma}
\newtheorem{prop}[thm]{Proposition}
\newtheorem{cor}[thm]{Corollary}
\theoremstyle{definition}
\newtheorem{dfn}[thm]{Definition}
\newtheorem{ex}[thm]{Example}
\theoremstyle{remark}
\newtheorem{rmk}[thm]{Remark}
\DeclareMathOperator{\PSp}{PSp}
\DeclareMathOperator{\U}{U}
\DeclareMathOperator{\red}{red}
\DeclareMathOperator{\lcm}{lcm}
\DeclareMathOperator{\Gammahat}{\widehat{\Gamma}}
\newcommand{\cE}{\mathcal{E}}
\newcommand{\cO}{\mathcal{O}}
\newcommand{\bF}{\mathbf{F}}
\newcommand{\frakp}{\mathfrak{p}}
\newcommand{\veps}{\varepsilon}
\DeclareMathOperator{\Gal}{Gal}
\DeclareMathOperator{\Sp}{Sp}
\DeclareMathOperator{\Lie}{Lie}
\DeclareMathOperator{\GL}{GL}
\DeclareMathOperator{\SL}{SL}
\DeclareMathOperator{\PSL}{PSL}
\DeclareMathOperator{\Sym}{Sym}
\DeclareMathOperator{\Ind}{Ind}
\newcommand*{\df}{\mathrel{\vcenter{\baselineskip0.5ex \lineskiplimit0pt
                     \hbox{\scriptsize.}\hbox{\scriptsize.}}} =}
\providecommand{\abs}[1]{\left\lvert#1\right\rvert}
\providecommand{\twomat}[4]{\left(\begin{matrix}#1&#2\\#3&#4\end{matrix}\right)}
\providecommand{\pseries}[2]{#1[\![ #2 ]\!]}
\providecommand{\floor}[1]{{\left\lfloor #1 \right \rfloor}}
\providecommand{\ceil}[1]{{\left\lceil #1 \right \rceil}}
\newcommand{\Zp}{\mathbf{Z}_p}
\newcommand{\QQ}{\mathbf{Q}}
\newcommand{\FF}{\mathbf{F}}
\newcommand{\CC}{\mathbf{C}}
\newcommand{\Qbar}{\overline{\mathbf{Q}}}
\newcommand{\ZZ}{\mathbf{Z}}
\newcommand{\PP}{\mathbf{P}}
\begin{document}
%Title
\title{Families of $\phi$-congruence subgroups of the modular group}

\author{Angelica Babei}
\address{McMaster University}
\email{babeia@mcmaster.ca}
\author{Andrew Fiori}
\address{University of Lethbridge}
\email{andrew.fiori@uleth.ca}
\author{Cameron Franc}
\address{McMaster University}
\email{franc@math.mcmaster.ca}

\thanks{The authors gratefully acknowledge financial support received from NSERC through their respective Discovery Grants, the financial support of the University of Lethbridge, and the use of computational resources made available through WestGrid and Compute Canada.}
\date{}

\begin{abstract}
We introduce and study families of finite index subgroups of the modular group that generalize the congruence subgroups. Such groups, termed $\phi$-congruence subgroups, are obtained by reducing homomorphisms $\phi$ from the modular group into a linear algebraic group modulo integers. In particular, we examine two families of examples, arising on the one hand from a map into a quasi-unipotent group, and on the other hand from maps into symplectic groups of degree four. In the quasi-unipotent case we also provide a detailed discussion of the corresponding modular forms, using the fact that the tower of curves in this case contains the tower of isogenies over the elliptic curve $y^2=x^3-1728$ defined by the commutator subgroup of the modular group.
\end{abstract}
\maketitle

\setcounter{tocdepth}{1}
\tableofcontents

\section{Introduction}
The modular group $\Gamma = \SL_2(\ZZ)$ and its finite index subgroups play a fundamental role throughout mathematics and physics. The most well-known of these subgroups are the congruence subgroups, which have the property that membership in the group can be tested against a finite list of congruence conditions. Equivalently, a subgroup is congruence if it contains one of the principal congruence subgroups
\[
  \Gamma(N) = \{\gamma \in \Gamma \mid \gamma \equiv 1 \pmod{N}\}.
\]
The quotients of the complex upper-half plane by these congruence subgroups define the modular curves, which classify elliptic curves decorated with additional structures related to torsion points on the curves. In this paper we study a generalization of this notion of congruence subgroup to families of noncongruence subgroups.

Most subgroups of $\Gamma$ of finite index are not congruence subgroups. In recent years, it has been shown that they too define natural moduli spaces \cite{Chen}, and their arithmetic has been shown to impinge on problems of classical interest. For example, Chen has shown \cite{Chen20} that the arithmetic of certain noncongruence moduli spaces is related to the description of Markoff triples \cite{BourgainGamburdSarnak}. Now that the unbounded denominators conjecture on noncongruence modular forms has been resolved \cite{CalegariDimitrovTang}, one of the most interesting arithmetic open problems in this area, aside from the Grothendieck-Teichmuller conjecture \cite{Schneps}, is to determine the Eisenstein ideals \cite{DworkVanDerPoorten} associated to noncongruence subgroups, which encode the unbounded denominators phenomenon. Determining the Eisenstein ideals arising from $\SL_2(\FF_p)$-Teichmuller level structures would shed light on the work of \cite{Chen20}. 

Future exploration of phenomena such as this could be aided by having at hand a healthy supply of interesting spaces and groups to study. Thinking along these lines, in this paper we generalize the congruence subgroups in a natural way. The notion depends on a group homomorphism
\[
\phi \colon \Gamma \to G(\Qbar)
\]
where $G$ is some algebraic group defined over a number field. Given such data, there exists a number field $K/\QQ$ such that $\phi$ takes values in $G(\cO_K[1/M])$ for some integer $M\geq 1$, where $\cO_K$ is the ring of integers in $K$. Therefore it makes sense to reduce this homomorphism $\phi$ modulo all but finitely many prime ideals of $K$. The corresponding \emph{principal $\phi$-congruence subgroups} of level $N$ for $\gcd(M,N)=1$ are defined as
\[
  \Gamma(\phi,N) = \{\gamma \in \Gamma \mid \phi(\gamma) \equiv 1 \pmod{N}\}.
\]
A subgroup of $\Gamma$ is said to be \emph{$\phi$-congruence} if it contains some principal $\phi$-congruence subgroup. These are all finite index subgroups of the modular group, and when $\phi$ is the inclusion of $\SL_2(\ZZ)$ into $\SL_2(\QQ)$ we recover the usual congruence subgroups. When $\phi$ has Zariski dense image, these groups are related to the Malcev completion of $\Gamma$ (see \cite{Hain} for a nice discussion), where $\Gamma$ is interpreted as the fundamental group of the moduli stack of elliptic curves.

Below we study in Section \ref{s:families} basic properties of these $\phi$-congruence subgroups and, in particular, introduce some tools that allow us under suitable hypotheses to lift surjective maps $\Gamma \to G(\FF_p)$ to continuous surjections $\widehat \Gamma \to G(\ZZ_p)$, where $\widehat \Gamma$ denotes the profinite completion of $\Gamma$ and $\ZZ_p$ denotes the $p$-adic integers.

In Section \ref{s:unipotent} we study a particular family of $\phi$-congruence subgroups arising from an upper-triangular but not decomposable representation of $\Gamma$ of rank $2$. The modular curves that arise as $\phi$-congruence modular curves are precisely the curves that live beneath the abelian covers of the elliptic curve $y^2 = x^3-1728$. These curves have quite a long history, going back at least to investigations of Poincar\'{e}, though there remain some open problems (for example, describing the modular curves and Eisenstein constants for the genus zero groups $G_p$ introduced in Subsection \ref{ss:Gp}). We describe how to compute equations for the curves defined by $\phi$-congruence subgroups that lie between the commutator $\Gamma'$ and the double-commutator $\Gamma''$, as well as give a detailed discussion of the corresponding modular forms and the Eisenstein constants. This material uses classical results on division polynomials associated to elliptic curves.

Finally, in Section \ref{s:symplectic} we use the moduli space of rank $4$ irreducible representations of $\Gamma$ described by Tuba-Wenzl \cite{TW} to identify some $\phi$-congruence subgroups related to $\Sp_4$. We were led to consider this case in view of the probabilistic results of \cite{LS} which show that it is rarer to find surjective maps from $\Gamma$ onto $\Sp_4(\FF_p)$ than for other linear algebraic groups. We compute genus and dimension formulas for the corresponding modular curves and modular forms, though we say nothing about finding equations for these curves, which are very high degree covers of the moduli space of elliptic curves, nor do we say anything about the corresponding Eisenstein constants.

By combining our symplectic results with the general theory worked out in Section \ref{s:families}, we are able to establish the following result:
\begin{thm}
\label{t:symplecticmain}
Let $x\in\QQ^\times \setminus (\QQ^\times)^2$, and let $\phi \colon \Gamma \to \Sp_4(\QQ)$ be defined by the matrices
  \begin{align*}
\phi \twomat 1101 =& \left(\begin{matrix}
x & 3x^{-1} & 3x & x^{-1} \\
0 & x^{-1}& 2x & x^{-1} \\
0 & 0 & x & x^{-1} \\
0 & 0 & 0 & x^{-1}
                 \end{matrix}\right), & \phi\twomat{0}{-1}10=& \left(\begin{matrix}
0 & 0 & 0 & -x^{-1} \\
0 & 0 & x & 0 \\
0 & -x^{-1} & 0 & 0 \\
x & 0 & 0 & 0
\end{matrix}\right).
\end{align*}
Let $p>7$ be a prime such that $x$ is a primitive root mod $p$, and let $\widehat \Gamma$ denote the profinite completion of $\Gamma$. Then $\phi$ gives rise to a continuous surjection
\[
\widehat \phi \colon \widehat \Gamma \to \Sp_4(\ZZ_p).
\]
Equivalently, the map $\phi$ induces isomorphisms $\Gamma/\Gamma(\phi,p^n) \cong \Sp_4(\ZZ/p^n\ZZ)$ for all $n\geq 0$.
\end{thm}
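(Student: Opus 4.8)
The plan is to verify the hypotheses of the lifting machinery developed in Section \ref{s:families} and then feed them the explicit symplectic representation. First I would check that $\phi$ is a well-defined homomorphism: since $\Gamma = \SL_2(\ZZ)$ is generated by $T = \stwomat 1101$ and $S = \stwomat 0{-1}10$ subject to the relations $S^4 = 1$ and $(ST)^3 = S^2$, it suffices to confirm that the two displayed matrices lie in $\Sp_4(\QQ)$ (for an appropriate symplectic form) and satisfy these relations; this is the routine matrix computation I would not grind through here, but note that the reduction mod $p$ lands in $\Sp_4(\FF_p)$ once $x$ is a $p$-adic unit, which holds because $x$ is a primitive root mod $p$.

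Next I would establish surjectivity of the reduced map $\bar\phi \colon \Gamma \to \Sp_4(\FF_p)$. This is where the Tuba--Wenzl classification from Section \ref{s:symplectic} and the probabilistic/group-theoretic input come in: the image $H = \bar\phi(\Gamma)$ is a subgroup of $\Sp_4(\FF_p)$ containing the image of $T$, which is a regular unipotent-type element (its eigenvalues are $x, x^{-1}, x, x^{-1}$ but its Jordan structure is nontrivial), together with the order-dividing-$4$ element $\bar\phi(S)$. One shows $H$ acts irreducibly on $\FF_p^4$ via the irreducibility established earlier for this family, so $H$ is not contained in a parabolic; then one rules out the other maximal subgroups of $\Sp_4(\FF_p)$ (reducible stabilizers, imprimitive subgroups preserving a decomposition $\FF_p^2 \oplus \FF_p^2$ or $\FF_{p^2}^2$, subfield subgroups, and the few "exotic" classes, using $p > 7$ to clear the small exceptional cases), leaving $H = \Sp_4(\FF_p)$. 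The condition that $x$ be a primitive root presumably forces the eigenvalues of $\bar\phi(T)$ to generate $\FF_p^\times$, which is exactly what is needed to exclude subfield and imprimitive subgroups.

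With surjectivity mod $p$ in hand, the final step is to invoke the lifting result advertised in the introduction: under suitable hypotheses a surjection $\Gamma \to G(\FF_p)$ lifts to a continuous surjection $\widehat\Gamma \to G(\ZZ_p)$. Here $G = \Sp_4$ is simply connected and semisimple, so for $p$ not too small the kernel of $\Sp_4(\ZZ/p^{n+1}) \to \Sp_4(\ZZ/p^n)$ is a nontrivial irreducible module for $\Sp_4(\FF_p)$ with no nonzero coinvariants (the congruence kernel is $\mathfrak{sp}_4(\FF_p)$, whose $\Sp_4(\FF_p)$-coinvariants vanish for $p>7$); a standard Frattini-type / Nakayama argument then shows inductively that $\bar\phi$ lifted over $\ZZ/p^n$ stays surjective, giving $\Gamma/\Gamma(\phi,p^n) \cong \Sp_4(\ZZ/p^n)$ for all $n$, hence the continuous surjection onto $\Sp_4(\ZZ_p) = \varprojlim \Sp_4(\ZZ/p^n)$. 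The main obstacle is the mod-$p$ surjectivity: carefully navigating the subgroup structure of $\Sp_4(\FF_p)$ and checking that the primitive-root hypothesis on $x$ genuinely eliminates every proper maximal overgroup of the image is the crux, since everything downstream (the $p$-adic lifting) is essentially formal once that is settled.
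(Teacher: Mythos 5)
Your overall strategy is the paper's strategy (mod-$p$ surjectivity by ruling out maximal subgroups, then a congruence-kernel/adjoint-module lifting argument), but the step you yourself identify as the crux is not actually carried out, and the two shortcuts you propose for it do not work as stated. First, you claim the image acts irreducibly on $\FF_p^4$ ``via the irreducibility established earlier for this family.'' The Tuba--Wenzl irreducibility is a characteristic-zero statement about the family over $\Qbar$; reduction mod $p$ of an irreducible representation need not stay irreducible, so containment in a parabolic must be excluded by hand. The paper does this by checking directly that $\phi_p(T)$ and $\phi_p(S)$ do not simultaneously stabilize a line and a hyperplane, and by a separate fixed-point analysis on the Lagrangian Grassmannian to exclude the Siegel parabolic. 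Second, your remark that the primitive-root hypothesis ``presumably'' kills subfield and imprimitive subgroups misses where the actual difficulty lies: over the prime field there are no proper subfield subgroups at all, while the genuinely hard items on Mitchell's list (as organized in King's Theorem 2.8) are the stabilizer of a pair of skew polar lines, the regular-spread subgroup $\PSL_2(\FF_{p^2}).2$, and the two maximal subgroups of orders $p(p-1)^2(p+1)$ and $p(p-1)(p+1)^2$. Eliminating these requires the precise fact that $\phi_p(T)$ has order exactly $p(p-1)$ (proved in the paper by conjugating $\phi(T)$ to a normal form with two Jordan blocks for $x$ and $x^{-1}$, the conjugating matrix being invertible mod $p$ precisely because $x$ is a primitive root and $p>7$), a Sylow-counting argument exhibiting $p+2$ distinct subgroups of order $p$ in the image, element-order constraints in $\SL_2(\FF_{p^2})$, and explicit fixed-point computations for $S$ and $R$ on the Lagrangian Grassmannian. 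None of this is ``navigated'' by the eigenvalues of $\phi_p(T)$ generating $\FF_p^\times$ alone.

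There is also a smaller but real gap in your lifting step. Vanishing of the $\Sp_4(\FF_p)$-coinvariants of $\mathfrak{sp}_4(\FF_p)$ does not by itself prevent the image in $\Sp_4(\ZZ/p^2\ZZ)$ from being a complement to the congruence kernel; to rule that out you need either the non-splitness of the extension $1\to\mathfrak{sp}_4(\FF_p)\to\Sp_4(\ZZ/p^2\ZZ)\to\Sp_4(\FF_p)\to 1$ over the image, or (as the paper's Section on lifting does) the observation that a unipotent element of the image lifts to an element whose $p$-th power is a \emph{nontrivial} element of the congruence kernel when $p$ exceeds twice the nilpotency degree; only then does irreducibility of the adjoint module (via Burnside) force the image to contain the whole kernel, and the induction on $n$ proceed. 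So the skeleton of your plan matches the paper, but both the mod-$p$ surjectivity and the base case of the lifting need the concrete arguments sketched above rather than the appeals you make.
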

See Section \ref{s:symplectic} for more technical details on this result, and for proof of the main statements in Theorem \ref{t:symplecticmain}. In particular, we warn the reader that the homomorphism $\phi$ of Theorem \ref{t:symplecticmain} keeps a nonstandard symplectic form invariant, though the corresponding symplectic group is equivalent to that defined by more standard conventions. Briefly, the proof strategy of Theorem \ref{t:symplecticmain} has two key steps: we first establish the theorem mod $p$ by showing that the image of $\phi$ is not contained in any of the maximal subgroups of $\Sp_4(\FF_p)$, using the explicit enumeration and description of these subgroups as detailed in \cite{King}. Next we use the material from Section \ref{ss:lifting} to give a Hensel lifting type argument to deduce surjectivity mod prime powers from the prime case. The difficulty of executing this strategy more generally rests on the complicated nature of the maximal subgroup structure of classes of finite groups of Lie type, cf. \cite{Aschbacher}. To give the flavor of the difficulties that one encounters, even when considering symplectic groups of degree four over general finite fields, one must consider stabilizers of much more complicated symplectic spreads \cite{BallZieve} than those that arise here over $\FF_p$. For other groups the possibilities are presumably even more foreboding.

There remain many interesting open questions about $\phi$-congruence subgroups, aside from constructing nice examples. For example, how does the absolute Galois group $\Gal(\Qbar/\QQ)$ act on the corresponding curves and associated dessins d'enfants? If the prime $p$ is fixed in Theorem \ref{t:symplecticmain} but $x$ is varied, does this describe a single orbit under the action of the Galois group? In a slightly different direction, under what conditions do these families give rise to expander graphs as in the case of modular curves \cite{Lubotzky}? We hope to return to some of these questions in future work.

\subsection{Acknowledgements} The authors thank Alun Stokes for the illustration of the dessin d'enfants in Figure \ref{f:dessin}, as well as Geoff Mason and Benjamin Breen for some helpful discussions.

\subsection{Notation}
\begin{itemize}
\item[---] $\Gamma = \SL_2(\ZZ)$ or $\PSL_2(\ZZ)$, depending on circumstances. When relevant we will recall which choice is currently in effect;
\item[---]  $ S=(\begin{smallmatrix} 0 & -1 \\ 1 & 0\end{smallmatrix})$, $T=(\begin{smallmatrix} 1 & 1 \\ 1 & 0\end{smallmatrix}) $, $R=ST$;
\item[---] if $G$ is a group, then $G'$ denotes its commutator subgroup;
\item[---] if $K/\QQ$ is a number field, then $\cO_K$ denotes the ring of integers in $K$;
\item[---] $\QQ_p$ and $\ZZ_p$ denote the $p$-adic numbers and the $p$-adic integers, respectively;
\item[---] $v_p$ denotes the normalized $p$-adic valuation on $\QQ_p$ defined such that $v_p(p)=1$;
\item[---] $\tau$ denotes a coordinate on the complex upper-half plane;
\item[---] we write $q = e^{2\pi i\tau/6}$ in Section \ref{s:unipotent}. Any other occurrences of $q$ should be interpreted as meaning $q=e^{2\pi i\tau}$.
\end{itemize}

\section{Families of subgroups}
\label{s:families}
\subsection{Definitions}
Let $\Gamma = \SL_2(\ZZ)$, let $G$ be a finite group, and suppose given a homomorphism
\[
  \phi \colon \Gamma \to G.
\]
The kernel of $\phi$ is necessarily of finite index, and we are interested in two basic questions:
\begin{enumerate}
\item[(a)] When is $\phi$ surjective?
\item[(b)] When is $\ker \phi$ noncongruence?
\end{enumerate}
Since most subgroups of finite index in $\Gamma$ are noncongruence, the answer to question (b) is ``almost always'', and in practice it can be tested easily using Wohlfahrt's criterion. For example, one has the following well-known result whose proof demonstrates this principle:
\begin{lem}
  \label{l:noncong}
If $n \geq 5$ and $\phi \colon \Gamma \to S_n$ is surjective, then $\ker \phi$ is noncongruence.
\end{lem}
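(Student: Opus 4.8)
The plan is to argue by contradiction: assume $\ker\phi$ is a congruence subgroup, use Wohlfahrt's criterion to force $\phi$ to factor through a finite reduction of $\Gamma$, and then contradict the normal structure of $S_n$ for $n\geq 5$. First I would pin down the Wohlfahrt level of $\ker\phi$. Because $\ker\phi$ is normal, all of its cusps are equivalent under $\Gamma/\ker\phi\cong S_n$ and hence share a common width, while the width at $\infty$ is the least $w$ with $T^w\in\ker\phi$, namely $m:=\order(\phi(T))$ (recall $-I\in\ker\phi$ since $\phi(-I)$ is central in $S_n$). Thus the level of $\ker\phi$ is $m$, and Wohlfahrt's criterion yields $\Gamma(m)\subseteq\ker\phi$, so $\phi$ factors as a composite of surjections $\Gamma\twoheadrightarrow\SL_2(\ZZ/m\ZZ)\xrightarrow{\,\psi\,}S_n$.

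Next I would analyse the normal structure of $\SL_2(\ZZ/m\ZZ)$. Writing $m=\prod_i p_i^{e_i}$, the Chinese remainder theorem gives $\SL_2(\ZZ/m\ZZ)\cong\prod_i\SL_2(\ZZ/p_i^{e_i}\ZZ)$. The reduction $\SL_2(\ZZ/p^e\ZZ)\to\SL_2(\FF_p)$ is surjective with kernel a $p$-group, so when $p_i\geq 5$ the preimage $R_i$ of $\{\pm I\}$ is a solvable normal subgroup with quotient $\PSL_2(\FF_{p_i})$, while for $p_i\in\{2,3\}$ the whole group $\SL_2(\ZZ/p_i^{e_i}\ZZ)$ is already solvable and I set $R_i$ equal to it. Then $R:=\prod_i R_i$ is a solvable normal subgroup of $\SL_2(\ZZ/m\ZZ)$ whose quotient $Q\cong\prod_{p_i\geq 5}\PSL_2(\FF_{p_i})$ is a (possibly trivial) direct product of nonabelian simple groups, hence perfect.

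Finally I would close the argument. The image $\psi(R)$ is a solvable normal subgroup of $S_n$; since $n\geq 5$ the only normal subgroups of $S_n$ are $1$, $A_n$, $S_n$, and the latter two are not solvable, so $\psi(R)=1$ and $\psi$ descends to a surjection $\overline\psi\colon Q\twoheadrightarrow S_n$. But $Q$ is perfect, so $S_n=\overline\psi(Q)=\overline\psi(Q')\subseteq(\overline\psi(Q))'=S_n'=A_n$, which is absurd. Hence $\ker\phi$ is noncongruence.

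I do not expect a serious obstacle. The only inputs beyond bookkeeping are the classical facts that the congruence kernel $\ker(\SL_2(\ZZ/p^e\ZZ)\to\SL_2(\FF_p))$ is a $p$-group and that $\PSL_2(\FF_p)$ is simple for $p\geq 5$; the slightly delicate point is getting the Wohlfahrt level of a normal subgroup right, but in any case the argument only truly needs that $\ker\phi$ contains \emph{some} $\Gamma(N)$, so the conclusion is robust. The lone edge case is when $m$ is a $\{2,3\}$-number, so that $Q$ is trivial; then $S_n$ would be a quotient of a solvable group, equally impossible, and this is already subsumed in the final displayed contradiction.
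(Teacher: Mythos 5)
Your proof is correct, but it takes a genuinely different route from the paper's. Both arguments begin identically: the congruence assumption gives $\Gamma(N)\subseteq\ker\phi$, so $\phi$ factors through a surjection $\SL_2(\ZZ/N\ZZ)\to S_n$, and both exploit the CRT decomposition, the fact that the kernel of $\SL_2(\ZZ/p^e\ZZ)\to\SL_2(\FF_p)$ is a $p$-group, and the fact that the only normal subgroups of $S_n$ for $n\geq 5$ are $1$, $A_n$, $S_n$. The paper then proceeds in two stages: it first uses the uniqueness of the nontrivial proper normal subgroup of $S_n$ to force $N$ to be a prime power, then kills the $p$-group kernel to obtain a surjection $\SL_2(\FF_p)\to S_n$, which it rules out by comparing $\PSL_2(\FF_p)$ with $A_n$ as finite simple groups. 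You instead keep the full modulus: you construct the solvable normal subgroup $R=\prod_i R_i$ (preimages of $\{\pm I\}$ at primes $\geq 5$, the entire factor at $p=2,3$) whose quotient $Q$ is a product of groups $\PSL_2(\FF_{p_i})$ and hence perfect, observe that $S_n$ has no nontrivial solvable normal subgroup so $\psi(R)=1$, and then contradict the non-perfectness of $S_n$. What your version buys: no prime-power reduction is needed, and the endgame uses only that $S_n' = A_n \neq S_n$, avoiding any classification-flavored comparison of simple groups --- a point where the paper's stated justification is slightly delicate at $n=5$, since $A_5\cong\PSL_2(\FF_5)$ and the surjection $\SL_2(\FF_5)\to S_5$ must instead be excluded by perfectness or the center. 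What the paper's version buys is brevity. One small remark: your Wohlfahrt-level discussion is superfluous, as you yourself note --- the definition of congruence already provides $\Gamma(N)\subseteq\ker\phi$ for some $N$, which is all the argument uses (and $N=1$ is impossible since $\phi$ surjects onto a nontrivial group).
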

\begin{proof}
  Assume that $\Gamma(N) \subseteq \ker \phi$ for some $N \geq 2$. This yields an isomorphism
\[
  \frac{\prod_{p^r \mid \mid N}\SL_2(\ZZ/p^r\ZZ)}{K}\stackrel{\cong}{\longrightarrow} S_n
\]
where $K$ is the image of $\ker \phi$ in the quotient $\Gamma/\Gamma(N)\cong\prod_{p^r \mid \mid N}\SL_2(\ZZ/p^r\ZZ)$. Since $S_n$ has a unique nontrivial normal subgroup, we must have that $N =p^r$ is a prime power. Let $H$ be the kernel of the reduction map $\SL_2(\ZZ/p^r\ZZ) \to \SL_2(\FF_p)$ and let $\phi \colon \SL_2(\ZZ/p^r\ZZ) \to S_n$ denote the surjective homomorphism obtained from our congruence hypothesis. Then $\phi(H)$ is a normal subgroup of $S_n$, and so must be $\{1\}$, $A_n$ or $S_n$. But $H$ is a $p$-group, and so the only possibility is $\phi(H)=\{1\}$. Hence $H\subseteq K$ and we obtain a surjection $\SL_2(\FF_p) \to S_n$, which is absurd, since $\PSL_2(\FF_p)$ and $A_n$ belong to members of different families of finite simple groups.
\end{proof}

The answer to question (a) is more delicate as, for example, the maximal subgroup structure of $G$ can be quite complicated, cf. \cite{Aschbacher}. Nevertheless, the results of papers such as \cite{LS} suggest again that question (a) should frequently have a positive answer. Our aim in this paper is to describe some methods for using global homomorphisms $\phi$ into linear algebraic groups to identify and compute with families of finite-index subgroups of the modular group. We begin with some notation and basic properties. 

Let $G$ be a linear algebraic group defined over a number field and let $\phi \colon \Gamma \to G(\Qbar)$ denote a homomorphism. Since $\Gamma$ is finitely generated, there exists a number field $K/\QQ$ with ring of integers $\cO_K$ such that $\phi$ takes values in $G(\cO_K[1/M])$ for some integer $M\geq 1$. That is, we have a map
\[
  \phi \colon \Gamma \to G(\cO_K[1/M]).
\]
It makes sense to reduce $\phi$ mod integers $N$ coprime to $M$, and the kernels of such maps are our analogues of principal congruence subgroups:
\begin{dfn}
  For $\phi$ as above, we define the corresponding \emph{principal $\phi$-congruence subgroups} of level $N\geq 1$ for $\gcd(M,N)=1$ as
  \[
  \Gamma(\phi,N) = \{\gamma \in \Gamma \mid \phi(\gamma) \equiv 1\pmod{N}\}.
\]
\end{dfn}

\begin{dfn}
  More generally, a subgroup $\Lambda \subseteq \Gamma$ is said to be a \emph{$\phi$-congruence subgroup} if there exists $N\geq 1$ with $\Gamma(\phi,N) \subseteq \Lambda$.
\end{dfn}

\begin{rmk}
Since isomorphism of representations corresponds to matrix conjugation, it is clear that the $\phi$-congruence subgroups of $\Gamma$ only depend on $\phi$ up to isomorphism.
\end{rmk}

\begin{rmk}
Notice that $\Gamma(\phi,1) = \Gamma$ for all representations $\phi$. Therefore one does not have, for example, $\Gamma(\phi,1) \cap \Gamma(N) = \Gamma(\phi,N)$.
\end{rmk}

\begin{rmk}
Suppose that $\Lambda \subseteq \Gamma$ is of finite index, and let $\rho$ be a representation of $\Lambda$ defined over $K$. One can define $\rho$-congruence subgroups $\Lambda(\rho,N)$ analogously to above. However, by considering the block description of matrices defining $\Ind_{\Lambda}^\Gamma\rho$, one sees that
\[
  \Gamma(\Ind_{\Lambda}^\Gamma \rho,N) \subseteq \Lambda(\rho,N).
\]
Therefore every $\rho$-congruence subgroup of $\Lambda$ is an $\Ind_{\Lambda}^\Gamma\rho$-congruence subgroup of $\Gamma$ and so, from this perspective, it suffices to consider $\phi$-congruence subgroups of the full modular group $\Gamma$ as defined above.
\end{rmk}

\begin{lem}
\label{l:basicprops}
  The following properties hold:
  \begin{enumerate}
  \item Each $\Gamma(\phi,N)$ is a normal subgroup of finite index in $\Gamma$, and so each $\phi$-congruence subgroup of $\Gamma$ is of finite index in $\Gamma$.
  \item If $N_1 \mid N_2$ then $\Gamma(\phi,N_2)\subseteq \Gamma(\phi,N_1)$.
  \item One has $\Gamma(\phi,N_1)\cap \Gamma(\phi,N_2) = \Gamma(\phi, \lcm(N_1,N_2))$ and
    \[\bigcap_{\gcd(M,N)=1} \Gamma(\phi,N) = \{1\}.\]
\item Let $\phi_1$ and $\phi_2$ denote two representations of $\Gamma$ defined over $K$. Then
\[
 \Gamma(\phi_1,N)\cap \Gamma(\phi_2,N)\subseteq \Gamma(\phi_1\otimes \phi_2,N).
\]
\end{enumerate}
\end{lem}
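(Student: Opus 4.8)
The plan is to verify each of the four properties directly from the definition of $\Gamma(\phi, N)$, treating the claims in order since later parts reuse the elementary observations established in earlier ones.

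For (1), I would observe that reduction modulo $N$ gives a group homomorphism $\mathrm{red}_N \colon G(\cO_K[1/M]) \to G(\cO_K/N\cO_K)$ (valid since $\gcd(M,N)=1$), and that $\Gamma(\phi,N)$ is the kernel of the composite $\mathrm{red}_N \circ \phi \colon \Gamma \to G(\cO_K/N\cO_K)$. A kernel is automatically a normal subgroup, and it has finite index because $G(\cO_K/N\cO_K)$ is finite (the ring $\cO_K/N\cO_K$ is finite). Normality and finite index of an arbitrary $\phi$-congruence subgroup $\Lambda$ then follow: finite index is immediate from $\Gamma(\phi,N)\subseteq\Lambda\subseteq\Gamma$, though one should note $\Lambda$ itself need not be normal.

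For (2), if $N_1 \mid N_2$ and $\phi(\gamma) \equiv 1 \pmod{N_2}$, then certainly $\phi(\gamma) \equiv 1 \pmod{N_1}$, entrywise; this is just the statement that the ideal $N_2\cO_K$ is contained in $N_1\cO_K$. For (3), the inclusion $\Gamma(\phi,\lcm(N_1,N_2)) \subseteq \Gamma(\phi,N_1)\cap\Gamma(\phi,N_2)$ follows from (2); conversely, if $\phi(\gamma)-1$ has all entries in both $N_1\cO_K$ and $N_2\cO_K$, then each entry lies in $N_1\cO_K \cap N_2\cO_K = \lcm(N_1,N_2)\cO_K$ — here I would use that $N_1, N_2$ are coprime to $M$ and invoke the standard fact about intersections of ideals generated by integers, or argue prime-by-prime using the CRT decomposition of $\cO_K/N\cO_K$. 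For the intersection over all $N$ coprime to $M$: if $\gamma \neq 1$ lies in every $\Gamma(\phi,N)$, then $\phi(\gamma) = 1$ in $G(\cO_K[1/M])$, so $\gamma \in \ker\phi$; but then I need $\ker\phi = \{1\}$, which is \emph{not} true in general. So instead I would phrase this more carefully — the intended statement must be $\bigcap_N \Gamma(\phi,N) = \ker\phi$, or else there is a standing hypothesis that $\phi$ is injective; I would check the surrounding text and adjust the claim to $\bigcap_{\gcd(M,N)=1}\Gamma(\phi,N) = \ker\phi$, whose proof is then: $\gamma$ lies in the left side iff every entry of $\phi(\gamma)-1$ lies in $N\cO_K$ for all such $N$, iff every such entry lies in $\bigcap_N N\cO_K = \{0\}$ (as the positive integers coprime to $M$ have no common divisor beyond $1$, and $\cO_K$ is a domain that is torsion-free over $\ZZ$), iff $\phi(\gamma) = 1$.

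For (4), I would use that the matrix entries of $(\phi_1 \otimes \phi_2)(\gamma)$ are products $(\phi_1(\gamma))_{ij}(\phi_2(\gamma))_{kl}$, so that $(\phi_1\otimes\phi_2)(\gamma) = \phi_1(\gamma) \otimes \phi_2(\gamma)$; if $\gamma \in \Gamma(\phi_1,N) \cap \Gamma(\phi_2,N)$, write $\phi_1(\gamma) = 1 + NA$ and $\phi_2(\gamma) = 1 + NB$ with $A,B$ matrices over $\cO_K[1/M]$ (legitimate since their entries lie in $N\cO_K[1/M]$), and expand $(1+NA)\otimes(1+NB) = 1 \otimes 1 + N(A\otimes 1 + 1\otimes B) + N^2(A\otimes B) \equiv 1 \pmod N$. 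The main obstacle I anticipate is not any single computation but rather the bookkeeping in part (3): one must be slightly careful that ``$\equiv 1 \pmod N$'' for a matrix over $\cO_K[1/M]$ means entries in $N\cO_K$, and that this notion interacts correctly with lcm and with the infinite intersection — in particular, as noted, the stated form of the infinite-intersection claim appears to need $\ker\phi$ on the right-hand side unless injectivity of $\phi$ is being assumed, and I would reconcile this with the paper's conventions before finalizing.
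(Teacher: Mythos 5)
Your proposal is correct and follows essentially the same route as the paper: the paper also proves (1) by exhibiting $\Gamma(\phi,N)$ as the kernel of $\red_N\circ\phi\colon \Gamma \to G(\cO_K[1/M]/N\cO_K[1/M])$, notes that this quotient ring is Artinian (hence the target group is finite, bounding the index), dismisses (2) and (3) as immediate from the definition, and handles (4) via Kronecker products exactly as you do. Your substitution of $\cO_K/N\cO_K$ for $\cO_K[1/M]/N\cO_K[1/M]$ is harmless since these are isomorphic when $\gcd(M,N)=1$.

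The one substantive point where you go beyond the paper is your flag on the infinite intersection in (3), and you are right to raise it. The paper's proof says nothing about it --- ``(2) and (3) follow directly from the definition'' --- but the literal claim $\bigcap_{\gcd(M,N)=1}\Gamma(\phi,N)=\{1\}$ does require $\ker\phi=\{1\}$; the argument you give (entries of $\phi(\gamma)-1$ lie in $\bigcap_N N\cO_K[1/M]=\{0\}$) only yields $\bigcap_{\gcd(M,N)=1}\Gamma(\phi,N)=\ker\phi$, which is the correct general statement. The paper itself furnishes a counterexample to the statement as printed: the indecomposable $\phi$ of Section \ref{s:unipotent} has $\ker\phi=\Gamma''\neq\{1\}$, and there $\bigcap_p\ker\phi_p=\Gamma''$. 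So your more careful formulation is the one that should stand; everything else in your write-up matches the intended proof.
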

\begin{proof}
  Properties (2) and (3) follow directly from the definition, and the normality condition in (1) is clear since $\Gamma(\phi,N)$ is the kernel of the composed homomorphism
  \[
  \Gamma \stackrel{\phi}{\longrightarrow} G(\cO_K[1/M]) \stackrel{\red_N}{\longrightarrow} G(\cO_K[1/M]/N\cO_K[1/M]).
\]
The ring $\cO_K[1/M]/N\cO_K[1/M]$ is Artinian, and so $\red_N\circ \phi$ takes values in a finite group. Therefore its kernel $\Gamma(\phi,N)$ is finite index with
\[
  [\Gamma\colon \Gamma(\phi,N)] \leq \abs{G(\cO_K[1/M]/N\cO_K[1/M])}.
\]

Finally, (4) follows easily by considering the Kronecker products of matrices.
\end{proof}

Recall that the character group of $\Gamma$ is cyclic of order $12$, generated by $\chi$, where $\chi ((\begin{smallmatrix} 1 & 1 \\ 0&1\end{smallmatrix})) = e^{2 \pi i/12}$. This representation is the character of the square of the Dedekind $\eta$-function.

\begin{lem}
With $\chi$ as above, 
\[
  \Gamma(\chi^r,N) = \begin{cases}
\Gamma & N=1,\\
\ker \chi^{2r}& N=2,\\
\ker \chi^r & N> 2.
\end{cases}
\]
\end{lem}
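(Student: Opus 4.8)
The plan is to realise $\chi^r$ as a one-dimensional representation valued in a cyclotomic ring and then reduce the question to a computation with roots of unity. Since the character group of $\Gamma$ is cyclic of order $12$, the homomorphism $\chi$ takes values in the group $\mu_{12}$ of $12$-th roots of unity; these are units of $R := \ZZ[\zeta_{12}]$, so in the notation of the definitions we may take $K = \QQ(\zeta_{12})$, $G = \GL_1$ and $M = 1$, and reduction modulo every $N \geq 1$ is defined, with
\[
  \Gamma(\chi^r, N) = \{\gamma \in \Gamma \mid \chi^r(\gamma) \equiv 1 \pmod{NR}\}.
\]
Thus everything comes down to the arithmetic question: for which $\zeta \in \mu_{12}$ is $\zeta \equiv 1 \pmod{NR}$?

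The case $N = 1$ is the general fact that $\Gamma(\phi,1) = \Gamma$. For $N \geq 2$ I would argue as follows. Let $\zeta \in \mu_{12}$ have exact order $e \mid 12$; if $e = 1$ then $\zeta = 1$, so assume $e \geq 2$, whence $1 - \zeta \neq 0$. If $\zeta \equiv 1 \pmod{NR}$ then $(1-\zeta) \subseteq (N)$ as ideals of $R$, so taking absolute ideal norms, $N^4 = \mathbf N\bigl((N)\bigr)$ divides $\mathbf N\bigl((1-\zeta)\bigr) = \bigl\lvert N_{\QQ(\zeta_{12})/\QQ}(1-\zeta)\bigr\rvert$. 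As $\sigma$ ranges over $\Gal(\QQ(\zeta_{12})/\QQ)$, the conjugate $\sigma\zeta$ runs through the primitive $e$-th roots of unity, each hit $[\QQ(\zeta_{12}):\QQ(\zeta_e)] = 4/\varphi(e)$ times, so this norm equals $\Phi_e(1)^{4/\varphi(e)}$, which is $16, 9, 4, 1, 1$ for $e = 2, 3, 4, 6, 12$ respectively (using $\Phi_2(1) = \Phi_4(1) = 2$, $\Phi_3(1) = 3$, $\Phi_6(1) = \Phi_{12}(1) = 1$). Hence $N^4 \leq 16$, forcing $N = 2$; and then $16 \mid \Phi_e(1)^{4/\varphi(e)}$ only for $e = 2$, i.e.\ $\zeta = -1$. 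Conversely $-1 \equiv 1 \pmod{2R}$ since $1-(-1) = 2 \in 2R$.

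Assembling the cases then finishes the proof. For $N = 2$ we have shown $\chi^r(\gamma) \equiv 1 \pmod{2R}$ if and only if $\chi^r(\gamma) \in \{\pm 1\}$, equivalently $\chi^r(\gamma)^2 = \chi^{2r}(\gamma) = 1$, so $\Gamma(\chi^r, 2) = \ker \chi^{2r}$. For $N > 2$ no root of unity of order $> 1$ is $\equiv 1 \pmod{NR}$, so the condition $\chi^r(\gamma) \equiv 1 \pmod{NR}$ forces $\chi^r(\gamma) = 1$ and $\Gamma(\chi^r, N) = \ker \chi^r$.

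The one point needing care is the norm estimate: it is essential to work in the full ring $\ZZ[\zeta_{12}]$ rather than in $\ZZ[\zeta_e]$, since $1 - \zeta_4$ and $1 - \zeta_3$ already divide $2$ and $3$ respectively inside their own cyclotomic rings, so a norm argument over those smaller fields would fail to exclude $\zeta_4 \equiv 1 \pmod 2$ or $\zeta_3 \equiv 1 \pmod 3$. The extra exponent $4/\varphi(e)$ picked up over $\QQ(\zeta_{12})$ — equivalently, the way $2$ and $3$ ramify or remain inert in $\ZZ[\zeta_{12}]$ relative to the ideal $(1-\zeta)$ — is exactly what sharpens the bound enough; everything else is routine bookkeeping with $\Phi_e(1)$.
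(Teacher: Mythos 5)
Your proof is correct and takes essentially the paper's route: both reduce the lemma to deciding which twelfth roots of unity are congruent to $1$ modulo $N$ in $\ZZ[\zeta_{12}]$, the paper simply asserting the answer (only $\pm 1$ for $N=2$, only $1$ for $N>2$) while you verify it via the divisibility $N^4 \mid \Phi_e(1)^{4/\varphi(e)}$. Only your closing aside is slightly off --- a norm argument over $\ZZ[i]$ or $\ZZ[\zeta_3]$ would in fact also exclude $\zeta_4 \equiv 1 \pmod{2}$ and $\zeta_3 \equiv 1 \pmod{3}$, since $4 \nmid 2$ and $9 \nmid 3$ --- but this remark plays no role in your argument, which is carried out entirely in $\ZZ[\zeta_{12}]$ and is sound.
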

\begin{proof}
 By definition
 \[
   \Gamma(\chi^r,N) = \{\gamma \in \Gamma \mid \chi(\gamma)^r \equiv 1 \pmod{N}\}.
 \]
 Now, $\chi(\gamma)^r$ is a twelfth root of unity. When $N=1$ there is nothing to show. If $N=2$ then $\gamma \in \Gamma(\chi^r,2)$ if and only if $\chi(\gamma)^r = \pm 1$, equivalently, if and only if $\chi(\gamma)^{2r}=1$. When $N > 2$ then $\gamma \in \Gamma(\chi^r,N)$ if and only if $\chi(\gamma)^r=1$, since $1$ is the only twelfth root of unity congruent to $1$ mod $N$ in this case. This concludes the proof.
\end{proof}

\begin{rmk}
Since $\chi^r = \chi^{\otimes r}$, the preceding Lemma illustrates that the inclusion in part (4) of Lemma \ref{l:basicprops} is typically a proper inclusion. Though, if a subgroup is both $\phi_1$-congruence and $\phi_2$-congruence, then Lemma \ref{l:basicprops} shows that it is $\phi_1\otimes\phi_2$-congruence.
\end{rmk}

\begin{dfn}
  The \emph{$\phi$-completion} of $\Gamma$ is the inverse limit
  \[
  \Gammahat_\phi \df \varprojlim_N \frac{\Gamma}{\Gamma(\phi,N)}.
  \]
\end{dfn}

Suppose that $G$ satisfies strong approximation. For each prime $\frakp$ of $\cO_K$, let $\cO_{K,\frakp}$ denote the $\frakp$-adic completion of $\cO_K$. Then $\phi$ induces canonical maps
\[
  \widehat \Gamma \longrightarrow \Gammahat_\phi \longrightarrow \prod_{\gcd(\frakp,M)=1}G(\cO_{K,\frakp}),
\]
where $\widehat \Gamma$ denotes the profinite completion of $\Gamma$.  In the next subsection we introduce some basic tools, likely known to experts, that will be used in Section \ref{s:symplectic} to describe explicit surjections
\[
  \widehat{\Gamma} \to \prod_{p \in S} \Sp_4(\ZZ_p)
\]
for certain (conjecturally) infinite sets of primes $S$. See Corollary \ref{c:sp4surject} below for more details.

\begin{rmk}
All modular curves obtained from the $\phi$-congruence groups $\Gamma(\phi,M)$ have moduli interpretations in the sense of \cite{Chen}. One expects the finite groups underlying the Teichmuller level structures of these moduli interpretations to be closely related to the image of $\phi$, though the relationship is not always clear. The proof of Theorem 5.1 in \cite{BER} can be used to make this relationship explicit in concrete examples. The possible discrepancy between the image of $\phi$ and the corresponding Teichmuller level structures should be borne in mind while reading Section \ref{s:unipotent} below, where we study a family of noncongruence groups arising from a homomorphism $\phi$ with metabelian image. In \cite{ChenDeligne} it is shown that the subgroups of $\Gamma$ defined by metabelian Teichmuller level structures are all congruence subgroups of $\Gamma$. Thus, in the examples of Section \ref{s:unipotent}, the groups underlying the Teichmuller level structures certainly differ from the images of the reductions of $\phi$. 
\end{rmk}

\subsection{Lifting surjectivity mod prime powers}
\label{ss:lifting}
When considering families as above, it is natural to ask when the image of a map
\[ \phi_N : \SL_2(\ZZ) \rightarrow G(\ZZ/N\ZZ) \]
is surjective. It is automatic, by the Chinese remainder theorem, that it suffices to check that $\phi_{p^r}$ is surjective for each $p^{r} || N$.
In this section we show that typically, it would suffice to check each $\phi_p$ where $p|N$.

For the remainder of this section we fix a reductive group $G$ defined over $\cO$,
the ring of integers of some number field, and a prime $\frakp | p$ of $\cO$ which is unramified.
We shall denote by $q = \abs{ \cO/\frakp }$ and $\bF_q = \cO/\frakp$.
We shall also fix a subgroup $L \subset G(\mathcal{O})$.

We will denote by $G_{r} = G(\cO/\frakp^r)$ and for $s<r$ by $G_{r,s}$ the kernel of the map $G_{r} \rightarrow G_s$.
Similarly we shall denote by $L_r$ the image of $L$ in $G_r$ and $L_{r,s}$ the kernel of the map $L_r \rightarrow G_s$.

\begin{lem}
For all but finitely many $p$ we have for each $r\ge 1$ that $G_{r+1,r}$ is isomorphic to the Lie algebra, $\Lie(G_{\bF_q})$, over $\bF_q$ and
 the conjugation action of $G_{r+1}$ on the normal subgroup $G_{r+1,r}$ induces an action of $G_1$ on $G_{r+1,r}$ given by 
 the usual adjoint action on $\Lie(G_{\bF_q})$.
 
 Moreover, for almost all $p$, the map $G_{r+2,r} \rightarrow G_{r+2,r}$ given by $x\mapsto x^p$ induces an isomorphism of groups
 $G_{r+1,r} \rightarrow G_{r+2,r+1}$.
 
Consequently, if for $r_0\ge 1$ we have  $L_{r_0+1,r_0} = G_{r_0+1,r_0}$ then for all $r>r_0$ we have $L_{r,r_0} = G_{r,r_0}$.
 \end{lem}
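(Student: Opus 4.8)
The plan is to treat the three assertions in order, the last (the ``consequently'' clause) being a formal consequence of the second together with the vector-space structure furnished by the first. Two standing reductions, both subsumed in ``all but finitely many $p$'': since $G$ is reductive it extends to a smooth affine group scheme over $\cO[1/n]$ for some integer $n$, and we restrict to primes $\frakp\mid p$ with $p\nmid n$ and $\frakp$ unramified; and, enlarging $n$ if need be, we fix a closed immersion $G\hookrightarrow\GL_N$ over $\cO[1/n]$, so that $\exp$ and $\log$ are available as matrix power series on sufficiently deep congruence subgroups once $p$ is large. For assertion~(1): the ideal $I_r=\frakp^r/\frakp^{r+1}$ of $\cO/\frakp^{r+1}$ satisfies $I_r^2=0$ (because $2r\ge r+1$ for $r\ge1$) and is a one-dimensional vector space over $\cO/\frakp=\bF_q$, so the standard square-zero deformation isomorphism for smooth group schemes gives
\[
G_{r+1,r}\;\cong\;\Lie(G)\otimes_{\cO}I_r\;\cong\;\Lie(G_{\bF_q}),
\]
the last step using $I_r\cong\bF_q$ and base change $\Lie(G)\otimes_{\cO}\bF_q=\Lie(G_{\bF_q})$. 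The congruence-filtration commutator estimate $[G_{r+1,a},G_{r+1,b}]\subseteq G_{r+1,a+b}$ gives $[G_{r+1,1},G_{r+1,r}]\subseteq G_{r+1,r+1}=\{1\}$, so conjugation by $G_{r+1}$ on the abelian group $G_{r+1,r}$ factors through $G_{r+1}/G_{r+1,1}=G_1$; that the resulting $G_1$-action is the adjoint action on $\Lie(G_{\bF_q})$ is part of the same infinitesimal computation.

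For assertion~(2), fix $r\ge1$ and work inside $\GL_N$. For $p$ large the maps $\exp,\log$ restrict to a bijection $G_{r+2,r}\leftrightarrow\Lie(G)\otimes(\frakp^r/\frakp^{r+2})$ (the ideal $\frakp^r/\frakp^{r+2}$ is nilpotent of bounded class, so only finitely many small primes are excluded, the tightest constraint occurring at $r=1$), and since $X$ commutes with itself one has $\exp(X)^p=\exp(pX)$ exactly; thus $x\mapsto x^p$ corresponds to multiplication by $p$ on $\Lie(G)\otimes(\frakp^r/\frakp^{r+2})$. Because $\frakp\mid p$ is \emph{unramified} one has $p\cO_{\frakp}=\frakp\cO_{\frakp}$, so multiplication by $p$ on $\frakp^r/\frakp^{r+2}$ has both image and kernel equal to $\frakp^{r+1}/\frakp^{r+2}$ and therefore induces an isomorphism $\frakp^r/\frakp^{r+1}\xrightarrow{\,\sim\,}\frakp^{r+1}/\frakp^{r+2}$. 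Tensoring with the free $\cO$-module $\Lie(G)$ and transporting back through $\exp$ identifies $x\mapsto x^p$ with an isomorphism $G_{r+1,r}\xrightarrow{\,\sim\,}G_{r+2,r+1}$; in particular the $p$-th power map carries $G_{r+2,r}$ into $G_{r+2,r+1}$ and depends only on the class modulo $\frakp^{r+1}$. (One can dispense with $\exp/\log$ via the Hall--Petrescu formula: the correction terms lie in $\bF_q$-vector spaces of characteristic $p$ and so vanish, using $p$ odd at $r=1$ for the $\binom{p}{2}$-term.)

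For assertion~(3) I first reduce to the one-step claim $(\ast)_s\colon L_{s+1,s}=G_{s+1,s}$ for all $s\ge r_0$. Granting $(\ast)_s$, I induct on $r>r_0$ to obtain $L_{r,r_0}=G_{r,r_0}$: the case $r=r_0+1$ is the hypothesis, and for the inductive step, surjectivity of $L_{r+1}\to L_r$ restricts to a short exact sequence $1\to L_{r+1,r}\to L_{r+1,r_0}\to L_{r,r_0}\to1$ (the kernel is $L_{r+1,r}$ since $r\ge r_0$), and likewise for $G$; as $L_{r,r_0}=G_{r,r_0}$ by induction and $L_{r+1,r}=G_{r+1,r}$ by $(\ast)_r$, the finite groups $L_{r+1,r_0}\subseteq G_{r+1,r_0}$ have equal order, hence coincide. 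To prove $(\ast)_s$ I induct on $s\ge r_0$; the base case $s=r_0$ is the hypothesis. Assume $(\ast)_s$ and take $g\in G_{s+2,s+1}$. By~(2) there is $h\in G_{s+1,s}$ such that every lift of $h$ to $G_{s+2,s}$ has $p$-th power equal to $g$. By $(\ast)_s$, $h$ is the image of some $\lambda\in L\subseteq G(\cO)$ with $\lambda\equiv1\pmod{\frakp^{s}}$. Then $\lambda^p\in L$, and the reduction of $\lambda$ modulo $\frakp^{s+2}$ is a lift of $h$ lying in $G_{s+2,s}$, so its $p$-th power --- the reduction of $\lambda^p$ modulo $\frakp^{s+2}$ --- equals $g$. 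Hence $g\in L_{s+2}$ and $g\equiv1\pmod{\frakp^{s+1}}$, i.e.\ $g\in L_{s+2,s+1}$; so $G_{s+2,s+1}\subseteq L_{s+2,s+1}\subseteq G_{s+2,s+1}$, which is $(\ast)_{s+1}$.

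The substantive work is in~(1) and~(2): one must keep careful track of the excluded primes --- bad reduction of $G$, ramification of $\frakp$, and the small primes at which $\exp/\log$ or the Hall--Petrescu corrections misbehave (worst at $r=1$, where $G_{r+2,r}$ is not even abelian) --- and invoke the deformation theory of smooth group schemes correctly. Assertion~(3) is then bookkeeping; the only point worth flagging is that it is closure of the \emph{global} group $L$ under $p$-th powers, rather than any feature of the $L_r$ in isolation, that lets the isomorphism of~(2) be realized by an element of $L$.
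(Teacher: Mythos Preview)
Your proof is correct and complete. It differs from the paper's argument chiefly in the level of abstraction.

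For assertions (1) and (2), the paper works concretely inside a fixed embedding $G\hookrightarrow\GL_n$: it identifies $\Lie(G)$ with matrices $X$ such that $1+\epsilon X\in G(\cO[\epsilon]/\epsilon^2)$, transports this via $\epsilon\mapsto p^r$ to get the injection $\Lie(G)\otimes\bF_q\hookrightarrow G_{r+1,r}$, and then computes directly that $(1+p^rX)^p\equiv 1+p^{r+1}X\pmod{p^{r+2}}$ to produce the map $G_{r+1,r}\to G_{r+2,r+1}$. Bijectivity is then a size count: the implicit function theorem gives $\lvert G_{r+1,r}\rvert=q^{\dim G}=\lvert\Lie(G)\otimes\bF_q\rvert$, forcing both injections to be isomorphisms. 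You instead invoke the square-zero deformation isomorphism for smooth group schemes and the $\exp/\log$ correspondence (with Hall--Petrescu as a fallback). Your route is cleaner conceptually and makes the role of unramifiedness of $\frakp$ more transparent; the paper's route is more elementary and self-contained, and makes the exclusion of finitely many primes concretely visible in the binomial expansion.

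For assertion (3), the paper's argument is a one-line size comparison: since $x\mapsto x^p$ restricts to $L$, the isomorphism of (2) carries $L_{r+1,r}$ onto a subgroup of $L_{r+2,r+1}$, so $L_{s+1,s}=G_{s+1,s}$ propagates to all $s\ge r_0$, whence $\lvert L_{r,r_0}\rvert=\lvert G_{r,r_0}\rvert$. Your two-layer induction via $(\ast)_s$ and the short exact sequences $1\to L_{r+1,r}\to L_{r+1,r_0}\to L_{r,r_0}\to 1$ unpacks exactly the same idea, but you make explicit the point the paper leaves tacit: that the relevant lift of $h$ can be taken in the \emph{global} group $L$, so its $p$-th power again lies in $L$. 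This is the right thing to emphasize.
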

 \begin{proof}
 We recall that $\Lie(G)$ can be identified with the kernel of the map
    \[        G(\cO[\epsilon]/\epsilon^2) \rightarrow  G(\cO[\epsilon]/\epsilon). \]
 So with $G$ embedded in $\GL_n$ we may identify this with the $n$ by $n$ matrices $X$ for which
  \[   1 + \epsilon X  \in  G(\cO[\epsilon]/\epsilon^2). \]
  Similarly, we have that $G_{r+1,r}$ is precisely the $n$ by $n$ integer matrices $X$ for which
 \[            1+p^r X \in G( \cO/\frakp^{r+1} ). \]
The map $\cO[\epsilon]/\epsilon^2 \rightarrow \cO/\frakp^{r+1}$ given by $\epsilon \mapsto p^r$ then induces the map $\Lie(G)\otimes \bF_q \hookrightarrow G_{r+1,r}$,
which is clearly equivariant for the action of conjugation by $G(\bF_q)$.

One can readily verify by case analysis that for almost all $p$ the map is bijective for all semi-simple groups. In particular, in the applications in this paper we only require this for $\Sp_4$, and so this concludes the proof in that case.

Alternatively, for the general case, notice that for almost all $p$ we have that the implicit function theorem allows us to lift elements of $G_{r+1,r}$ to $G_{r+2,r}$, so that the map  $G_{r+2,r} \rightarrow G_{r+2,r}$ given by $x\mapsto x^p$, so:
  \[  (1 + p^rX)^p = 1+p^{r+1}X \pmod{p^{r+2}}  \]
  induces an injection $G_{r+1,r} \hookrightarrow G_{r+2,r+1}$ which is seen to be a group homomorphism. 
  Whenever the implicit function theorem applies we have that $|G_{r+2,r+1}| = q^{{\rm dim}(G)} = | {\rm Lie}(G) \otimes \bF_q|$.  
  If follows that the injection   
\[  \Lie(G) \otimes \bF_q \rightarrow G_{r+1,r} \rightarrow G_{r+2,r+1} \]
is indeed an isomorphism, as are the maps  $G_{r+1,r} \rightarrow G_{r+2,r+1}$.

 It follows that if $L_{r_0+1,r_0} = G_{r_0+1,r_0}$ then $L_{r+1,r} = G_{r+1,r}$ for all $r\ge r_0$ so that the groups $L_{r,r_0}$ and $G_{r,r_0}$. have the same size, hence are isomorphic.
\end{proof}

% \begin{lem}
% For $p\neq 2$ and each $r\ge 1$, or for $p=2$ and each $r\ge 2$,  the function $G_{r+2,r} \rightarrow G_{r+2,r}$ given by $x\mapsto x^p$ induces an isomorphism of groups
% $G_{r+1,r} \rightarrow G_{r+2,r+1}$.
% Consequently, if for $r_0\ge 1$ we have  $L_{r_0+1,r_0} = G_{r_0+1,r_0}$ then for all $r>r_0$ we have $L_{r,r_0} = G_{r,r_0}$.
% \end{lem}
% \begin{proof}
% Using that $\frakp$ is unramified we have that $p$ is a uniformizer for $\cO_{\frakp}$  then using that elements of $G_{r+2,r}$, once it is embedded in $\GL_n$, can be seen as elements of the form
% $1+p^rX$ we then have
%   \[  (1 + p^rX)^p = 1+p^{r+1}X \pmod{p^{r+2}}.  \]
%   In particular the function $G_{r+2,r} \rightarrow G_{r+2,r}$ induces an injective function $G_{r+1,r} \hookrightarrow G_{r+2,r+1}$ which is then easily seen to be a group isomorphism.
%   
% It follows that if $L_{r_0+1,r_0} = G_{r_0+1,r_0}$ then $L_{r+1,r} = G_{r+1,r}$ for all $r\ge r_0$ so that the groups $L_{r,r_0}$ and $G_{r,r_0}$. have the same size, hence are isomorphic.
% \end{proof}

\begin{lem}
If the adjoint action of $G(\bF)$ on the Lie algebra, $\Lie(G_{\bF_q})$, is irreducible and $L_{2,1}\neq \{0\}$, then $L_{2,1} = G_{2,1}$ and hence $L_{r,1} = G_{r,1}$ for all $r>1$.
\end{lem}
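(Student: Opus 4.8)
The plan is to deduce the first claim in a single step from the preceding lemma together with the irreducibility hypothesis, and then to bootstrap to all $r$ using the last assertion of that lemma.

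First I would invoke the preceding lemma: for all but finitely many $p$, the abelian group $G_{2,1}$ is canonically identified with $\Lie(G_{\bF_q})$, and the conjugation action of $G_2$ on $G_{2,1}$ factors through $G_1 = G(\bF_q)$ and agrees with the adjoint action. Next I would observe that $L_{2,1} = L_2 \cap G_{2,1}$ is normal in $L_2$, hence stable under conjugation by $L_2$; since that action factors through the image $L_1$ of $L_2$ in $G_1$, the subgroup $L_{2,1}$ is stable under $L_1$. In the situation relevant to this paper the reduction mod $\frakp$ has already been shown surjective, so $L_1 = G_1$, and then $L_{2,1}$ is a $G(\bF_q)$-submodule of $\Lie(G_{\bF_q})$ for the adjoint action. (Since $L_{2,1}$ is killed by $p$ it is automatically an $\bF_p$-subspace; when $q = p$ --- the only case needed for the $\Sp_4$ application --- this is all one needs, and the pertinent irreducible module is the adjoint representation of $G(\bF_p)$.) As $L_{2,1} \neq \{0\}$ by hypothesis, irreducibility forces $L_{2,1} = G_{2,1}$.

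With $L_{2,1} = G_{2,1}$ in hand, I would finish by applying the last assertion of the preceding lemma with $r_0 = 1$, which gives $L_{r,1} = G_{r,1}$ for all $r > 1$.

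The only place requiring real care is the first step, namely checking that $L_{2,1}$ is genuinely a submodule for the group whose representation is assumed irreducible; this is precisely where surjectivity of the reduction mod $\frakp$ is used, together with the identification of the adjoint action (valid for almost all $p$) furnished by the preceding lemma. Once that is secured, the rest is a formal consequence of the bootstrapping statement already established, so I do not anticipate any further obstacle.
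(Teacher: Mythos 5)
Your argument is essentially the paper's own: the published proof is a one-line appeal to Burnside's irreducibility criterion, which amounts to exactly your observation that $L_{2,1}$ is a nonzero conjugation-invariant subgroup of $G_{2,1}\cong\Lie(G_{\bF_q})$ with its adjoint action and hence, by irreducibility, all of it, after which the final clause of the preceding lemma with $r_0=1$ gives $L_{r,1}=G_{r,1}$ for all $r>1$. Your explicit use of $L_1=G_1$ is not among the lemma's stated hypotheses, but you are right to insert it: a priori $L_{2,1}$ is only stable under $L_1$, and without such a condition the statement literally fails (take $L$ generated by one element congruent to $1$ mod $\frakp$ but not mod $\frakp^2$, so that $L_{2,1}$ is a single line); since $L_1=G_1$ is a hypothesis of Theorem \ref{thm:irred}, where this lemma is applied, your version --- including the remark distinguishing $\bF_p$- from $\bF_q$-subspaces, which matters only when $q>p$ --- is the more careful rendering of the same argument.
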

 \begin{proof}
 This is a simple application of Burnside's irreducibility criterion \cite[Thm. 3.10]{isaacs}.
  \end{proof}

\begin{rmk}
Note that the adjoint action would typically be irreducible for simple groups, but not for instance for $\GL_n$, where the adjoint action has a summand being the trivial representation.
\end{rmk}

\begin{lem}
Suppose $\ell$ is the maximum nilpotency degree of an element of $\Lie(G)$.
If $p > 2\ell$ and $L_1$ contains a nilpotent element of $G_1$, then $L_{2,1}\neq \{0\}$.
\end{lem}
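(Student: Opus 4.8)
The plan is to produce the required nonzero element of $L_{2,1}$ completely explicitly, namely as $g^{p}\bmod\frakp^{2}$ for any $g\in L$ whose reduction $u:=\bar g\in L_{1}\subseteq G(\bF_q)$ is a nontrivial unipotent element (this is what ``nilpotent element of $G_1$'' should be taken to mean). Fixing as before a faithful embedding $G\hookrightarrow\GL_{n,\cO}$ and working in the completion $\cO_\frakp$ — in which $p$ is a uniformizer, since $\frakp$ is unramified — I would set $N:=g-1$, so that $\bar N=m:=u-1\in M_{n}(\bF_q)$ is nilpotent.

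First I would record the two consequences of the hypothesis $p>2\ell$. Since $p>\ell$, the truncated exponential and logarithm are mutually inverse bijections between the nilpotent elements of $\Lie(G)(\bF_q)$ and the unipotent elements of $G(\bF_q)$; in particular $\log u\in\Lie(G)(\bF_q)$, it is nonzero because $u\neq 1$, and $m=\exp(\log u)-1$ is a polynomial with vanishing constant term in the nilpotent element $\log u$, which satisfies $(\log u)^{\ell}=0$ by the choice of $\ell$, so $m^{\ell}=0$. (For $\Sp_{4}$, where $\ell=4$, the relation $m^{\ell}=0$ is just the statement that a unipotent element has Jordan blocks of size at most $4$, so nothing beyond this elementary fact is needed in the paper's application.) Two consequences follow: $u^{p}=(1+m)^{p}=1+m^{p}=1$ in characteristic $p$, so $g^{p}$ reduces to the identity in $G_{1}$ and its image in $G_{2}$ lies in $L_{2,1}=L_{2}\cap G_{2,1}$; and, because $p\geq 2\ell$, from $N^{\ell}\in p\,M_{n}(\cO_\frakp)$ we get $N^{p}=N^{p-2\ell}\cdot N^{\ell}\cdot N^{\ell}\in p^{2}M_{n}(\cO_\frakp)$, i.e. $N^{p}\equiv 0\pmod{\frakp^{2}}$.

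With these in hand the computation is routine: expand $g^{p}=(1+N)^{p}=1+pM+N^{p}$ where $M:=\sum_{i=1}^{p-1}\tfrac1p\binom{p}{i}N^{i}\in M_{n}(\cO_\frakp)$, using $p\mid\binom{p}{i}$ and $\tfrac1p\binom{p}{i}\equiv\tfrac{(-1)^{i-1}}{i}\pmod p$ for $1\leq i\leq p-1$. Reducing modulo $\frakp$ and using $m^{\ell}=0$ with $\ell<p$ gives $\bar M=\sum_{i=1}^{\ell-1}\tfrac{(-1)^{i-1}}{i}m^{i}=\log(1+m)=\log u$, which is nonzero. Hence $g^{p}\equiv 1+pM\pmod{\frakp^{2}}$, and under the identification $G_{2,1}\cong\Lie(G)(\bF_q)$, $1+pX\mapsto\bar X$, supplied by the first lemma of this subsection, the image of $g^{p}$ in $L_{2,1}$ is precisely the nonzero element $\log u$. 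Therefore $L_{2,1}\neq\{0\}$. The only step that is not pure $p$-adic bookkeeping is the claim that $p>2\ell$ forces $m^{\ell}=0$ for every unipotent $u$ — equivalently, that $p$ is large enough for the exponential/logarithm correspondence between nilpotent and unipotent elements to hold — and this is the point I would be most careful to state precisely; it is classical in good characteristic and, as noted, immediate for the symplectic groups relevant here, so I expect no serious obstacle.
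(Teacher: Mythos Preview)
Your proof is correct and follows essentially the same route as the paper's: lift a nontrivial unipotent $u\in L_1$ to $g=1+N\in L$, expand $g^{p}$ binomially modulo $\frakp^{2}$, use $p>2\ell$ to force $N^{p}\equiv 0\pmod{\frakp^{2}}$, and conclude that the result has the form $1+p\cdot(\text{nonzero})$. The only difference is in the final nonvanishing step: the paper appeals to the linear independence of $\bar N,\bar N^{2},\ldots,\bar N^{\ell'-1}$ over $\bF_q$, whereas you identify the relevant combination explicitly as $\log u$ via the congruence $\tfrac{1}{p}\binom{p}{i}\equiv\tfrac{(-1)^{i-1}}{i}\pmod p$ --- a minor refinement of the same idea, which also makes precise (through $m=\exp(\log u)-1$) why $m^{\ell}=0$, a point the paper asserts without comment.
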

 \begin{proof}
 Without loss of generality we may suppose that the element $g\in L$ is an element whose image in $L_1$ is nilpotent.
 Then $g$ has the form $1+X$, will satisfy $g^p=1\pmod{p}$ and has nilpotency degree $\ell' \leq \ell$ so that for $i=1,\ldots,\ell'-1$ we have $X^i$  are linearly independent over $\bF_q$, for $i=\ell',\ldots,p-1$ we have $X^i=0\pmod{p}$ and finally 
$ X^p = 0 \pmod{p^2}$.
 Then we have
\[ (1+X)^p = 1+ pX +  \left( \begin{smallmatrix} p \\ 2 \end{smallmatrix} \right) X^2 + \left( \begin{smallmatrix} p \\ 3 \end{smallmatrix} \right) X^3 + \cdots + \left( \begin{smallmatrix} p \\ \ell-1 \end{smallmatrix}  \right) X^{\ell-1} \pmod{p^2}.\qedhere \]
\end{proof}
\begin{rmk}
%The matrix $\left(\begin{smallmatrix} 3 &1\\0&1\end{smallmatrix}\right)$ is nilpotent modulo $2$ but its square is the identity modulo $4$.
The matrix $\left(\begin{smallmatrix} 4&1\\0&4\end{smallmatrix}\right)$ is nilpotent modulo $3$ but its cube is the identity modulo $9$.
\end{rmk}

Combining these we obtain:
\begin{thm}\label{thm:irred}
Suppose $p$ is at least twice the maximum nilpotency degree of $G$ and $\frakp | p$ is such that $G_{2,1} \cong \Lie(G_{\bF_q})$. Suppose also that the adjoint action of $G(\bF)$ on the lie algebra, $\Lie(G_{\bF_q})$, is irreducible (and non-trivial).
Then if $L_1 = G_1$ we have for all $r\ge 0$ that $L_r = G_r$.
\end{thm}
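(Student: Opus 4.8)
The plan is to chain together the three preceding lemmas, so that essentially no new argument is required beyond checking that their hypotheses are met. We must show $L_r = G_r$ for all $r\ge 0$; the cases $r\le 1$ are immediate (for $r=1$ this is the hypothesis $L_1 = G_1$, and for $r=0$ both groups are trivial), so fix $r\ge 2$.

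The one genuinely substantive point is to verify the remaining hypothesis of the nilpotency lemma above, namely that $L_1$ contains a nilpotent element of $G_1$ --- concretely, an element $1+X$ of $G(\bF_q)$ with $X$ a nonzero nilpotent matrix. Since $L_1 = G_1 = G(\bF_q)$, it is enough to exhibit a nontrivial unipotent element of $G(\bF_q)$, and such an element exists as soon as $G$ is semisimple of positive dimension: then $G$ has root subgroups isomorphic to $\GG_a$, whose $\bF_q$-points are nontrivial unipotent elements. And $G$ must be semisimple: if $Z(G)^\circ$ were a positive-dimensional torus, then $\Lie(Z(G)^\circ)$ would be a nonzero subspace of $\Lie(G_{\bF_q})$ on which the adjoint action is trivial, contradicting irreducibility unless $\Lie(G_{\bF_q})$ were one-dimensional with trivial adjoint action, which is excluded by the nontriviality hypothesis.

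Given this, the chain runs as follows. First, since $p$ is at least twice the maximal nilpotency degree $\ell$ and $L_1$ contains a nilpotent element, the nilpotency lemma above gives $L_{2,1}\ne\{0\}$. Next, $L_{2,1} = \ker(L_2\to G_1)$ is stable under conjugation by $L_2$, and by the structure lemma above --- this is where the hypothesis $G_{2,1}\cong\Lie(G_{\bF_q})$ enters --- this conjugation action factors through the image $L_1 = G_1$ of $L_2$ in $G_1$ and is identified with the adjoint action. Thus $L_{2,1}$ is a nonzero, adjoint-stable additive subgroup of the irreducible module $G_{2,1}\cong\Lie(G_{\bF_q})$, so $L_{2,1} = G_{2,1}$ by the Burnside-criterion lemma above, which also yields $L_{r,1} = G_{r,1}$ for all $r>1$. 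Finally I would conclude by a cardinality count: the exact sequences
\[ 1 \longrightarrow L_{r,1} \longrightarrow L_r \longrightarrow L_1 \longrightarrow 1 \]
and the analogue for $G$ give $\abs{L_r} = \abs{L_{r,1}}\cdot\abs{L_1} = \abs{G_{r,1}}\cdot\abs{G_1} = \abs{G_r}$, and since $L_r\subseteq G_r$ this forces $L_r = G_r$.

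The main obstacle is really the existence of a nontrivial unipotent in $G(\bF_q)$ handled in the second paragraph, since the rest is pure bookkeeping with the earlier lemmas and the hypotheses on $p$ and $\frakp$ are calibrated precisely to make those lemmas applicable. A secondary point worth flagging is that ``irreducible'' in the Burnside step should be read over the prime field $\bF_p$ (or else one replaces $L_{2,1}$ by the $\bF_q$-subspace it spans and uses $\bF_q$-irreducibility), since a priori $L_{2,1}$ is only an $\bF_p$-subspace of $\Lie(G_{\bF_q})$; for the cases of interest in this paper, e.g.\ $\mathfrak{sp}_4$ with $p$ large, this causes no difficulty.
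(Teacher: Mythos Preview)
Your proposal is correct and matches the paper's approach exactly: the paper writes only ``Combining these we obtain'' before stating the theorem, so the intended proof is precisely the chaining of the three preceding lemmas that you carry out. You supply two details the paper leaves implicit---that $L_1=G_1$ contains a nontrivial unipotent (forced by irreducibility and nontriviality of the adjoint action) and the final cardinality count---and your caveat about $\bF_p$- versus $\bF_q$-irreducibility is a fair technical point that the paper does not address.
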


\section{A unipotent family}
\label{s:unipotent}
\subsection{Generalities on the commutator subgroup}

Let $\Gamma=\SL_2(\ZZ)$, and $K = \QQ(\zeta)$ where $\zeta$ is a primitive twelfth root of unity, so that  $\zeta^4-\zeta^2+1=0$. If $p \equiv 1 \pmod{4}$ then $\FF_{p^2}$ contains all of the roots of $x^4-x^2+1$, so at times below we will restrict to such primes. On the other hand, we will sometimes want the image of $\phi_p$ to not be contained in $\SL_2(\FF_p)$, thus we'll insist that $p\equiv 5\pmod{12}$. This ensures that $x^4-x^2+1$ factors into two irreducible quadratics over $\FF_{p}$. 

Begin by defining a global representation $\phi \colon \Gamma \to \SL_2(\cO_K)$ by setting:
\begin{align*}
\phi(T) &= \twomat{\zeta}{0}{0}{\zeta^{-1}}, & \phi(S) &= \twomat {-\zeta^3}{1}{0}{\zeta^3}, & \phi(R) &= \twomat{1-\zeta^2}{\zeta-\zeta^3}0{\zeta^2}.
\end{align*}
This representation is not irreducible, but it is also not reducible into a direct sum of two one-dimensional subrepresentations. Notice that $T^{12} \in \ker \phi$, so that this representation is certainly not injective!
\begin{lem}
  \label{l:image}
  One has
  \[
  \phi(\Gamma) = \left\{\twomat{u}{uv}{0}{u^{-1}} \mid u^{12}=1,~ v \in \ZZ\zeta\oplus \ZZ\zeta^3\right\}.
  \]
\end{lem}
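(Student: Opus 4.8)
Write $V=\left\{\twomat{1}{v}{0}{1}\mid v\in\ZZ\zeta\oplus\ZZ\zeta^3\right\}$ and $T_0=\left\{\twomat{u}{0}{0}{u^{-1}}\mid u^{12}=1\right\}$, and let $U$ denote the set on the right-hand side of the Lemma, so that $U=V\cdot T_0$ as a set. I plan to prove the two inclusions $\phi(\Gamma)\subseteq U$ and $U\subseteq\phi(\Gamma)$ separately. First I would record the arithmetic of $\zeta$ used throughout: from $\zeta^4=\zeta^2-1$ one gets $\zeta^5=\zeta^3-\zeta$ and $\zeta^6=-1$, so $\mu_{12}=\langle\zeta\rangle$ and $\zeta^{-3}=-\zeta^3$; one also checks directly that $\zeta^{\pm2}\cdot\zeta$ and $\zeta^{\pm2}\cdot\zeta^3$ lie in $\ZZ\zeta\oplus\ZZ\zeta^3$, so by linearity and iteration the lattice $\ZZ\zeta\oplus\ZZ\zeta^3$ is stable under multiplication by every even power of $\zeta$, hence by $u^2$ for every $u\in\mu_{12}$. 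This last fact is exactly what is needed to see that $U$ is a subgroup of $\SL_2(\cO_K)$: $V$ is an additive copy of $\ZZ^2$, $T_0$ is cyclic of order $12$, and conjugating $\twomat{1}{v}{0}{1}$ by $\twomat{u}{0}{0}{u^{-1}}$ yields $\twomat{1}{u^2v}{0}{1}\in V$, so $T_0$ normalizes $V$ and $U=V\rtimes T_0$.

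For $\phi(\Gamma)\subseteq U$: since $\Gamma=\langle S,T\rangle$ and $U$ is a group, it suffices to note that $\phi(T)=\twomat{\zeta}{0}{0}{\zeta^{-1}}\in T_0\subseteq U$, and that $\phi(S)=\twomat{-\zeta^3}{1}{0}{\zeta^3}\in U$: here $u=-\zeta^3$ satisfies $u^{12}=1$, and the off-diagonal entry equals $1=u\cdot u^{-1}=u\cdot\zeta^3$ with $\zeta^3\in\ZZ\zeta\oplus\ZZ\zeta^3$ (using $\zeta^{-3}=-\zeta^3$).

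For the reverse inclusion the crucial point is that $\phi(\Gamma)$ contains all of $V$. I would compute the commutator $[\phi(T),\phi(S)]=\phi(T)\phi(S)\phi(T)^{-1}\phi(S)^{-1}$; using $\zeta^6=-1$ and $\zeta^5=\zeta^3-\zeta$ this collapses to $\twomat{1}{\zeta}{0}{1}$, and conjugating it by $\phi(T)$ gives $\twomat{1}{\zeta^3}{0}{1}$. Since $V$ is generated as a group by $\twomat{1}{\zeta}{0}{1}$ and $\twomat{1}{\zeta^3}{0}{1}$, this shows $V\subseteq\phi(\Gamma)$. Finally, given an arbitrary $x=\twomat{u}{uv}{0}{u^{-1}}\in U$, write $u=\zeta^k$; then $\phi(T)^k=\twomat{\zeta^k}{0}{0}{\zeta^{-k}}\in\phi(\Gamma)$ and $x\,\phi(T)^{-k}=\twomat{1}{u^2v}{0}{1}\in V\subseteq\phi(\Gamma)$, so $x\in\phi(\Gamma)$. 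Combining the two inclusions gives $\phi(\Gamma)=U$.

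Everything here is elementary; the only step needing a small idea is the observation that one commutator together with a single conjugate by $\phi(T)$ already produces a $\ZZ$-basis of the unipotent lattice $V$, after which the semidirect-product structure of $U$ finishes the argument. The main obstacle, such as it is, is careful bookkeeping in $\ZZ[\zeta]$ — in particular keeping track of $\zeta^{-1}$, $\zeta^{-3}$, and the stability of $\ZZ\zeta\oplus\ZZ\zeta^3$ under the relevant unit multiplications.
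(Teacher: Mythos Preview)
Your proof is correct and follows essentially the same strategy as the paper: show that the right-hand side $U$ is a subgroup containing $\phi(S)$ and $\phi(T)$ (giving $\phi(\Gamma)\subseteq U$), and exhibit explicit elements of $\phi(\Gamma)$ hitting a $\ZZ$-basis of the unipotent lattice $V$ (giving $U\subseteq\phi(\Gamma)$). The only cosmetic difference is the choice of witnesses for the generators of $V$: the paper uses $\phi(T^2ST)=\twomat{1}{\zeta}{0}{1}$ and $\phi(T^3S)=\twomat{1}{\zeta^3}{0}{1}$ directly, whereas you obtain the same two matrices via the commutator $[\phi(T),\phi(S)]$ and its $\phi(T)$-conjugate; you are also more explicit than the paper about why $U$ is closed under multiplication.
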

\begin{proof}
   First observe the following identities:
  \begin{align*}
\phi(T^2ST) &= \twomat 1{\zeta}01, &\phi(T^3S) &= \twomat 1{\zeta^3}01.
  \end{align*}
  This, combined with the description of $\phi(T)$, shows that the set on the right of the equality is contained inside $\phi(\Gamma)$. The reverse inclusion is obvious: notice that set on the right is a subgroup, containing $\phi(T)$ and $\phi(S)$, which generate  $\phi(\Gamma)$.
\end{proof}

\begin{lem}
\label{l:pimage}
  Let $p\equiv 5 \pmod{12}$. Then $\phi_p(\Gamma)$ is the following subgroup of $\SL_2(\FF_{p^2})$ of order $12p^2$,
  \[
\phi_p(\Gamma) = \left\{\twomat {u} v0{u^{-1}}\mid u \in \FF_{p^2},~ u^{12}=1,~ v\in\FF_{p^2}\right\}.
  \]
\end{lem}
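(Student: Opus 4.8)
The plan is to reduce the explicit description of $\phi(\Gamma)$ from Lemma~\ref{l:image} modulo a prime $\frakp$ of $\cO_K$ lying over $p$, and then to check that nothing collapses. First I would record the arithmetic of $p\equiv 5\pmod{12}$: this is equivalent to $p\equiv 1\pmod 4$ and $p\equiv 2\pmod 3$, so $\sqrt{-1}\in\FF_p$ while $\sqrt{-3}\notin\FF_p$; moreover the Frobenius at $p$ has order $2$ in $\Gal(K/\QQ)$, so $p$ has residue degree $2$ and $\cO_K/\frakp\cong\FF_{p^2}$. Since $\prod_{k=1}^{11}(1-\zeta^k)=12$ is a $\frakp$-unit, reduction modulo $\frakp$ is injective on $\mu_{12}(\cO_K)$, so $\bar\zeta:=\zeta\bmod\frakp$ is a primitive twelfth root of unity in $\FF_{p^2}$; and as $12\mid p^2-1$, the cyclic group $\FF_{p^2}^\times$ has a unique subgroup of order $12$, namely $\{u\in\FF_{p^2}:u^{12}=1\}$, which is therefore $\langle\bar\zeta\rangle$.

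The inclusion ``$\subseteq$'' is then immediate: by Lemma~\ref{l:image} every element of $\phi(\Gamma)$ has the form $\stwomat{u}{uv}{0}{u^{-1}}$ with $u^{12}=1$, so its reduction lies in $H_p:=\{\stwomat{u}{v}{0}{u^{-1}}:u,v\in\FF_{p^2},\ u^{12}=1\}$. This is a subgroup of $\SL_2(\FF_{p^2})$ of order $12p^2$, since the torus $\{\stwomat{u}{0}{0}{u^{-1}}:u^{12}=1\}$ (of order $12$) normalizes the unipotent subgroup $\{\stwomat{1}{x}{0}{1}:x\in\FF_{p^2}\}$ (of order $p^2$), and their product is all of $H_p$ as $\stwomat{u}{0}{0}{u^{-1}}\stwomat{1}{x}{0}{1}=\stwomat{u}{ux}{0}{u^{-1}}$.

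For the reverse inclusion I would use the identities recorded in the proof of Lemma~\ref{l:image}: $\phi(T^2ST)=\stwomat{1}{\zeta}{0}{1}$, $\phi(T^3S)=\stwomat{1}{\zeta^3}{0}{1}$ and $\phi(T)=\stwomat{\zeta}{0}{0}{\zeta^{-1}}$. Reducing, $\phi_p(\Gamma)$ contains $\stwomat{1}{\bar\zeta}{0}{1}$, $\stwomat{1}{\bar\zeta^3}{0}{1}$, and $\stwomat{\bar\zeta}{0}{0}{\bar\zeta^{-1}}$. The first two generate $\{\stwomat{1}{x}{0}{1}:x\in\FF_p\bar\zeta+\FF_p\bar\zeta^3\}$, and the crucial point is that $\{\bar\zeta,\bar\zeta^3\}$ is an $\FF_p$-basis of $\FF_{p^2}$: indeed $\bar\zeta^3$ is a primitive fourth root of unity, hence lies in $\FF_p$ because $\sqrt{-1}\in\FF_p$, whereas $\bar\zeta\notin\FF_p$ since its order $12$ does not divide $p-1$; a nonzero scalar and a non-scalar are $\FF_p$-independent in the two-dimensional space $\FF_{p^2}$. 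So $\phi_p(\Gamma)$ contains the full unipotent subgroup $\{\stwomat{1}{x}{0}{1}:x\in\FF_{p^2}\}$ together with the torus $\langle\stwomat{\bar\zeta}{0}{0}{\bar\zeta^{-1}}\rangle$, hence $\phi_p(\Gamma)\supseteq H_p$. Combined with the previous paragraph this gives $\phi_p(\Gamma)=H_p$, of order $12p^2$.

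I do not anticipate a genuine obstacle here: the argument is a direct reduction of Lemma~\ref{l:image}. The only steps requiring care are the congruence bookkeeping for $p\equiv 5\pmod{12}$ (which pins down the residue degree of $\frakp$ and ensures $\bar\zeta\notin\FF_p$, so that the image genuinely escapes $\SL_2(\FF_p)$) and the $\FF_p$-linear independence of $\bar\zeta$ and $\bar\zeta^3$; if one wished to avoid the latter, conjugating $\stwomat{1}{\bar\zeta}{0}{1}$ by powers of $\phi_p(T)$ yields $\stwomat{1}{\bar\zeta^{2k+1}}{0}{1}$ for all $k$, which again fills out the unipotent subgroup once one knows some root of unity of order dividing $6$ lies outside $\FF_p$, but the basis argument is the shortest route.
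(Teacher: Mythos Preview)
Your argument is correct and follows exactly the approach the paper indicates: the paper's entire proof is the single sentence ``This follows by reducing Lemma~\ref{l:image} mod $p$,'' and you have carefully supplied the details that this sentence leaves implicit (the residue degree of $\frakp$, the injectivity of reduction on $\mu_{12}$, and the fact that $\bar\zeta,\bar\zeta^3$ span $\FF_{p^2}$ over $\FF_p$). There is no substantive difference in strategy.
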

\begin{proof}
This follows by reducing Lemma \ref{l:image} mod $p$.
\end{proof}

Let $\Gamma' = [\Gamma ,\Gamma]$ denote the commutator subgroup. For details on the group $\Gamma'$ used below without proof, the reader can consult \cite{Newman}. If $U_p$ is the unipotent radical in $\phi_p(\Gamma)$, then $\phi_p(\Gamma)/U_p \cong \ZZ/12\ZZ$ whenever $p\equiv 5 \pmod{12}$, and so $\phi$ induces an isomorphism $\Gamma/\Gamma' \cong \phi_p(\Gamma)/U_p$. It follows that if $p\equiv 5 \pmod{12}$ then $\phi_p^{-1}(U_p) = \Gamma'$.

It is known that $\Gamma'$ is free nonabelian of rank $2$ generated by $A = [S,R]$ and $B=[S,R^2]$, with the identity $[A,B^{-1}]=-T^6$. The group $\Gamma'$ defines a genus one curve via its action on the upper half plane. For each prime $p\equiv 5\pmod{12}$, the group $\ker \phi_p$ is a normal subgroup of $\Gamma'$ with abelian quotient
\[
  \frac{\Gamma'}{\ker\phi_p} \cong \FF_{p^2}.
\]
Now consider $\Gamma''=[\Gamma', \Gamma']$. Notice that $\Gamma'' \subseteq \ker \phi_p \subseteq \Gamma'$ for all primes $p$, and thus in particular
\[
  \Gamma'' \subseteq \bigcap_{p\equiv 5\pmod{12}} \ker \phi_p \subseteq \Gamma'.
\]
If $U$ denotes the unipotent matrices in $\phi(\Gamma)$, then likewise $\phi(\Gamma') = U$. Since  $U$ is abelian, we have $\Gamma''\subseteq \ker \phi$.

\begin{lem}
  We have
  \[
  \ker \phi = \bigcap_{p\equiv 5\pmod{12}} \ker \phi_p = \Gamma''.
  \]
\end{lem}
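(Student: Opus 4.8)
The plan is to prove the three-way equality by a chain of inclusions, using the free metabelian structure of $\Gamma/\Gamma''$. We already know $\Gamma'' \subseteq \ker\phi$ (since $\phi(\Gamma') = U$ is abelian) and $\Gamma'' \subseteq \bigcap_p \ker\phi_p$. So it suffices to show the reverse inclusion $\bigcap_{p\equiv 5 \pmod{12}} \ker\phi_p \subseteq \Gamma''$ together with $\ker\phi \subseteq \Gamma''$; in fact the cleanest route is to prove $\ker\phi \subseteq \Gamma''$ and $\bigcap_p \ker\phi_p \subseteq \ker\phi$, which sandwiches everything.

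For the inclusion $\ker\phi \subseteq \Gamma''$: take $\gamma \in \ker\phi$. Since $\phi$ induces an isomorphism $\Gamma/\Gamma' \cong \phi(\Gamma)/U$ (the argument is the same as the mod $p$ one, using that $\phi(\Gamma)/U \cong \ZZ/12\ZZ$ is abelian and $\phi(T)$ has order $12$ modulo $U$), we get $\gamma \in \Gamma'$. Now restrict $\phi$ to $\Gamma'$: it lands in $U \cong \ZZ\zeta \oplus \ZZ\zeta^3 \cong \ZZ^2$ by Lemma \ref{l:image}, and the induced map $\Gamma'/\Gamma'' \to U$ is what we must show is injective. Since $\Gamma'$ is free of rank $2$ on $A = [S,R]$, $B = [S,R^2]$, the abelianization $\Gamma'/\Gamma''$ is free abelian of rank $2$ (modulo torsion coming from $[A,B^{-1}] = -T^6$ — but note $T^6 \in \Gamma'$, and one should check whether $\Gamma'/\Gamma''$ is genuinely $\ZZ^2$ or has a torsion part; since $[A,B^{-1}]$ is a commutator it dies in $\Gamma'/\Gamma''$, so indeed $\Gamma'^{\mathrm{ab}} \cong \ZZ^2$ generated by the images of $A, B$). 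Then I would compute $\phi(A)$ and $\phi(B)$ explicitly from the given matrices for $\phi(S), \phi(R)$: each is of the form $\left(\begin{smallmatrix} 1 & v \\ 0 & 1 \end{smallmatrix}\right)$ with $v \in \ZZ\zeta \oplus \ZZ\zeta^3$, and the key point is that the two resulting values of $v$ form a $\ZZ$-basis of $\ZZ\zeta \oplus \ZZ\zeta^3$. If so, $\Gamma'/\Gamma'' \to U$ is an isomorphism, hence injective, and $\gamma \in \ker\phi \cap \Gamma'$ forces $\gamma \in \Gamma''$.

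For $\bigcap_{p\equiv 5\pmod{12}} \ker\phi_p \subseteq \ker\phi$: if $\gamma$ lies in $\ker\phi_p$ for all such $p$, then $\phi(\gamma) - 1$ (an integral matrix over $\cO_K[1/M]$, here actually over $\cO_K$) is divisible by $p$ for infinitely many primes $p$, hence $\phi(\gamma) = 1$. This is the standard "congruent to $1$ mod infinitely many primes implies equal to $1$" argument applied entrywise in $\cO_K$; one only needs that there are infinitely many primes $p \equiv 5 \pmod{12}$, which is Dirichlet. Combining: $\Gamma'' \subseteq \bigcap_p \ker\phi_p \subseteq \ker\phi \subseteq \Gamma''$, giving equality throughout.

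The main obstacle is the explicit linear-algebra verification in the middle step: computing $\phi(A) = \phi([S,R])$ and $\phi(B) = \phi([S,R^2])$ from the matrices for $\phi(S)$ and $\phi(R)$, and checking that the off-diagonal entries $v_A, v_B \in \ZZ\zeta \oplus \ZZ\zeta^3$ span that rank-$2$ lattice over $\ZZ$ (not just over $\QQ(\zeta)$ — one needs the determinant of the change-of-basis matrix to be a unit, i.e. $\pm 1$). One also needs to be slightly careful that $\Gamma'/\Gamma''$ is torsion-free; the relation $[A,B^{-1}] = -T^6$ shows $-T^6 \in \Gamma''$, consistent with $\Gamma'$ being free on $A,B$ so that its abelianization is exactly $\ZZ^2$, but this should be stated cleanly with a reference to \cite{Newman}. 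Everything else is routine.
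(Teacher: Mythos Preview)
Your proposal is correct but runs the sandwich in the opposite direction from the paper. The paper first proves $\bigcap_{p\equiv 5\pmod{12}} \ker\phi_p = \Gamma''$ directly: since the quotient map $\Gamma'/\Gamma'' \cong \ZZ^2 \twoheadrightarrow \Gamma'/\ker\phi_p \cong (\ZZ/p\ZZ)^2$ is a surjection onto a group of exponent $p$, its kernel is exactly $p\ZZ^2$; intersecting over infinitely many $p$ gives zero. Then it uses the \emph{trivial} inclusion $\ker\phi \subseteq \ker\phi_p$ (reduction of the identity is the identity) to sandwich $\ker\phi$. You instead prove $\ker\phi = \Gamma''$ first and then sandwich $\bigcap_p \ker\phi_p$ via the nontrivial (but easy) entrywise ``divisible by infinitely many primes implies zero'' argument in $\cO_K$. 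Both orderings work.

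One simplification you are missing: the explicit computation of $\phi(A)$ and $\phi(B)$ that you flag as the ``main obstacle'' is unnecessary. The text just before the lemma already records $\phi(\Gamma') = U$, and Lemma~\ref{l:image} gives $U \cong \ZZ\zeta \oplus \ZZ\zeta^3 \cong \ZZ^2$. So the induced map $\Gamma'/\Gamma'' \to U$ is a \emph{surjection} of free abelian groups of rank $2$, hence automatically an isomorphism; injectivity comes for free and there is no lattice-index determinant to check. With that observation your argument becomes just as short as the paper's, and arguably more transparent, since the ``infinitely many primes'' step shows $\ker\phi = \bigcap_p \ker\phi_p$ without ever invoking the mod-$p$ image computation of Lemma~\ref{l:pimage}.
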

\begin{proof}
  Since $\Gamma'$ is freely generated by $A$ and $B$, which are of infinite order, it follows that
  \[
  \frac{\Gamma'}{\Gamma''} \cong \ZZ^2.
\]
If $p\equiv 5\pmod{12}$, then the intermediate identity
\[
\frac{\Gamma'}{\ker \phi_p} \cong \FF_{p^2} \cong (\ZZ/p\ZZ)^2
\]
is the reduction mod $p$ map. It follows that $\Gamma'' = \bigcap_{p\equiv 5\pmod{12}} \ker \phi_p$. Since we have seen that $\Gamma'' \subseteq \ker \phi \subseteq \ker \phi_p$ for all $p$, the lemma follows.
\end{proof}

Summarizing all of this, we have shown that if
\[
  L = \left\{\twomat {u}{uv}0{u^{-1}} \in \GL_2(\CC) \mid u^{12}=1,~ v \in \ZZ\zeta \oplus \ZZ\zeta^3\right\},
\]
then there is an exact sequence
\begin{equation}
  \label{eq:exact}
  1 \to \Gamma'' \to \Gamma \stackrel{\phi}{\to} L \to 1.
\end{equation}

The next lemma illustrates that the notion of $\phi$-congruence subgroups is natural and useful.

\begin{lem}
  \label{l:phicong}
The discrete group $L$ satisfies the congruence subgroup property. Therefore, the finite index subgroups $G\subseteq \Gamma$ containing $\Gamma''$ are exactly the $\phi$-congruence subgroups for this indecomposable representation $\phi$.
\end{lem}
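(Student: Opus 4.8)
The approach is to split the statement into (i) the congruence subgroup property for $L$ and (ii) a formal deduction from it. Part (ii) I would handle first. Since $\phi(S),\phi(T)\in\SL_2(\cO_K)$, the homomorphism $\phi$ has no denominators, so $\Gamma(\phi,N)=\phi^{-1}\bigl(L(N)\bigr)$ makes sense for every $N\ge 1$, where $L(N):=L\cap\ker\bigl(\SL_2(\cO_K)\to\SL_2(\cO_K/N\cO_K)\bigr)$ is the $N$-th principal congruence subgroup of $L$. Any $\phi$-congruence subgroup of $\Gamma$ contains some $\Gamma(\phi,N)$, hence contains $\ker\phi=\Gamma''$ and is of finite index by Lemma~\ref{l:basicprops}(1). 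Conversely, if $\Gamma''\subseteq G\subseteq\Gamma$ is of finite index, then the exact sequence~\eqref{eq:exact} identifies $\Gamma/G$ with $L/\phi(G)$, so $\phi(G)\le L$ is of finite index; granting (i) there is an $N$ with $L(N)\subseteq\phi(G)$, and applying $\phi^{-1}$ (valid since $\ker\phi=\Gamma''\subseteq G$) gives $\Gamma(\phi,N)\subseteq G$. So everything reduces to the claim: every finite-index subgroup of $L$ contains some $L(N)$.

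To prove this I would exploit the explicit shape of $L$ from Lemma~\ref{l:image}. Let $U=\bigl\{\stwomat{1}{v}{0}{1}:v\in\ZZ\zeta\oplus\ZZ\zeta^{3}\bigr\}\cong\ZZ^{2}$ be the unipotent part of $L$; conjugation by $\stwomat{u}{0}{0}{u^{-1}}$ scales $v$ by $u^{2}$ and preserves the lattice $\ZZ\zeta\oplus\ZZ\zeta^{3}$, so $U$ is normal in $L$ with finite quotient $L/U\cong\ZZ/12\ZZ$ recording the diagonal entry. Two facts then suffice. First, $L(N)\cap U=NU$, because $N\cO_K\cap(\ZZ\zeta\oplus\ZZ\zeta^{3})=N(\ZZ\zeta\oplus\ZZ\zeta^{3})$ inside $\cO_K=\ZZ[\zeta]$. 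Second, the image of $L(N)$ in $L/U\cong\ZZ/12\ZZ$ is the group of $12$th roots of unity $u$ with $u\equiv1\pmod{N\cO_K}$; for such a $u\ne1$ the ideal $(N)$ divides $(u-1)$, whence $N^{4}\le\bigl|\mathrm{Nm}_{K/\QQ}(u-1)\bigr|=\prod_{\sigma\colon K\hookrightarrow\CC}\abs{\sigma(u)-1}\le2^{4}$ because every $\sigma(u)$ has absolute value $1$. Thus for $N\ge3$ there is no nontrivial such $u$, so $L(N)\subseteq U$ and hence $L(N)=L(N)\cap U=NU$. Now let $H\le L$ be of finite index: then $U\cap H$ has finite index $e:=[U:U\cap H]$ in $U\cong\ZZ^{2}$, so $eU\subseteq U\cap H$; choosing any $N\ge3$ with $e\mid N$ yields $L(N)=NU\subseteq eU\subseteq H$, which is the congruence subgroup property.

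I do not expect a serious obstacle: the argument is mostly bookkeeping around the exact sequence~\eqref{eq:exact} and the structure of $L$. The one place where a genuine, if short, computation enters is the second fact above — that the finite cyclic quotient $L/U$ is ``seen'' by only finitely many congruence subgroups — which reduces to the elementary archimedean bound $\abs{\mathrm{Nm}_{K/\QQ}(u-1)}\le16$ for roots of unity $u$. One should keep an eye on the exceptional level $N=2$, where $L(2)$ contains the central element $-I\notin U$; this causes no trouble since we are free to restrict to $N\ge3$ throughout. It is also worth recording why the strategy works at all: the normal subgroup $U$ underlying it is torsion-free abelian of finite rank, so $\{NU\}_{N\ge1}$ is automatically cofinal among its finite-index subgroups — the same template would require extra work if this subgroup were nonabelian.
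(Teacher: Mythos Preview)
Your proof is correct and follows essentially the same approach as the paper: both reduce to the normal unipotent subgroup $U\cong\ZZ^2$, intersect a given finite-index subgroup with $U$, and use that finite-index subgroups of $\ZZ^2$ contain $NU$ for some $N$. You supply more detail than the paper does, explicitly verifying via the archimedean norm bound that $L(N)=NU$ for $N\ge3$, whereas the paper simply asserts without justification that $U^N$ is a congruence subgroup of $L$ and leaves the passage from $L$ back to $\Gamma$ as a one-line appeal to the exact sequence~\eqref{eq:exact}.
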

\begin{proof}
Let $L'\subseteq L$ be a finite index subgroup, and let $L'_0 = L'\cap U$, which is finite index in $L$ since an intersection of finite index subgroups is of finite index. Now $L'_0 \subseteq U$ is abelian, and since $U\cong \ZZ\zeta\oplus \ZZ\zeta^3$ visibly satisfies the congruence subgroup property, we find that $L'_0$ contains the congruence subgroup $U^N$ for some $N\geq 1$. But then $L'$ also contains $U^N$, so that it is a congruence subgroup. By the exact sequence \eqref{eq:exact}, the inverse images under $\phi$ of the finite index subgroups of $L$ are exactly the finite index subgroups between $\Gamma''$ and $\Gamma$.
\end{proof}

We can describe $\Gamma''$ more explicitly.  Define $\Delta(N)^-$ to be the smallest normal subgroup of $\Gamma$ containing $-T^N$.
\begin{lem}
\label{l:doublecommutator}
  One has $\Gamma''=\Delta(6)^-$.
\end{lem}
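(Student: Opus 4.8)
The plan is to prove the two inclusions $\Gamma'' \subseteq \Delta(6)^-$ and $\Delta(6)^- \subseteq \Gamma''$ separately, using the explicit free generators of $\Gamma'$ recorded above together with the identity $[A,B^{-1}] = -T^6$. For the inclusion $\Delta(6)^- \subseteq \Gamma''$, first I would observe that $-T^6 = [A,B^{-1}]$ lies in $[\Gamma',\Gamma'] = \Gamma''$ by the displayed commutator identity, and that $\Gamma''$ is normal in $\Gamma$ (being characteristic in the normal subgroup $\Gamma'$). Since $\Delta(6)^-$ is by definition the \emph{smallest} normal subgroup of $\Gamma$ containing $-T^6$, it follows immediately that $\Delta(6)^- \subseteq \Gamma''$.

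The reverse inclusion $\Gamma'' \subseteq \Delta(6)^-$ is the substantive direction, and the main obstacle. The strategy is to show that $\Gamma'/\Delta(6)^-$ is abelian, which forces $\Gamma'' = [\Gamma',\Gamma'] \subseteq \Delta(6)^-$. To see this, work in the quotient $Q = \Gamma/\Delta(6)^-$ and let $\bar A, \bar B$ denote the images of the free generators of $\Gamma'$. From $[A, B^{-1}] = -T^6 \in \Delta(6)^-$ we get that $\bar A$ and $\bar B^{-1}$, hence $\bar A$ and $\bar B$, commute in $Q$. Since $\Gamma' = \langle A, B\rangle$ is generated by $A$ and $B$, the image of $\Gamma'$ in $Q$ is generated by two commuting elements and is therefore abelian. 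Hence the commutator subgroup of $\Gamma'$ maps to the identity in $Q$, i.e.\ $\Gamma'' \subseteq \Delta(6)^-$, completing the argument.

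One point that deserves a careful check is that $\Delta(6)^- \subseteq \Gamma'$ in the first place, so that "the image of $\Gamma'$ in $Q$" and the comparison of commutator subgroups make sense cleanly; this holds because $-T^6 \in \Gamma'$ (indeed $-T^6 = [A,B^{-1}]$ exhibits it even in $\Gamma''$) and $\Gamma'$ is normal in $\Gamma$, so the normal closure of $-T^6$ stays inside $\Gamma'$. The only other subtlety is to make sure the commutator identity $[A,B^{-1}] = -T^6$ is being used with a consistent sign/convention for $[x,y]$; with that fixed, both inclusions are short. I expect the genuine content here is simply the free-generation of $\Gamma'$ by $A = [S,R]$, $B = [S,R^2]$ together with the single relation-type identity $[A,B^{-1}] = -T^6$, both of which are quoted from \cite{Newman}, so the proof is essentially a two-line argument once that structure theory is in hand.
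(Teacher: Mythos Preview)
Your proposal is correct and follows essentially the same approach as the paper's proof: both directions are established via the identity $[A,B^{-1}]=-T^6$, using normality of $\Gamma''$ in $\Gamma$ for $\Delta(6)^-\subseteq\Gamma''$, and the fact that $\Gamma'$ is freely generated by $A,B$ to conclude $\Gamma'/\Delta(6)^-$ is abelian for the reverse inclusion. Your version adds a few helpful justifications (e.g.\ that $\Gamma''$ is characteristic in $\Gamma'$, and that $\Delta(6)^-\subseteq\Gamma'$), but the argument is the same.
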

\begin{proof}
  This is due to Newman \cite{Newman}. We noted above that $[A,B^{-1}] = -T^6 \in \Gamma''$, so that $\Delta(6)^-\subseteq \Gamma''$. Since $\Gamma'$ is freely generated by $A$ and $B$, it follows from this that $\Gamma'/\Delta(6)^-$ is abelian. Hence also $\Gamma''\subseteq \Delta(6)^-$.
\end{proof}

\begin{lem}
Let $\chi$ be a one-dimensional representation of $\Gamma'$ such that $\ker \chi$ is a congruence subgroup. Then $\Gamma(6) \subseteq \ker\chi$.
\end{lem}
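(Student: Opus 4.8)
The idea is to push $\chi$ through the congruence quotients of $\Gamma$ and reduce everything to a statement about the abelianization of the commutator subgroup of $\SL_2(\ZZ/N)$. Throughout I would work modulo $\pm I$ (so that ``$\Gamma(6)$'' means its image in $\PSL_2(\ZZ)$): this is harmless, since $-I$ acts trivially on $\uhp$ and $-I\notin\Gamma'$, and it is exactly the setting in which the level $6$ is sharp, because $-T^6=[A,B^{-1}]\in\Gamma'$ forces $\Gamma''\subseteq\Gamma(6)$ while $\Gamma(6)\not\subseteq\Gamma'$ inside $\SL_2(\ZZ)$. Now $\Gamma'/\ker\chi$ is abelian (in fact cyclic, being a finite subgroup of $\CC^\times$), so $\Gamma''=[\Gamma',\Gamma']\subseteq\ker\chi$ and $\chi$ factors through $\Gamma'/\Gamma''\cong\ZZ^2$. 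Since $\ker\chi$ is a congruence subgroup it contains $\Gamma(N)$ for some $N$, and replacing $N$ by $\lcm(N,6)$ we may take $6\mid N$, so $\Gamma(N)\subseteq\Gamma'$. Then $\ker\chi$ contains $H_N\df\Gamma''\cdot\Gamma(N)$, a subgroup normal in $\Gamma$ with $\Gamma''\subseteq H_N\subseteq\Gamma'$. Hence it suffices to prove $H_N=H_6$ for every $N$ with $6\mid N$; then $\ker\chi\supseteq H_6\supseteq\Gamma(6)$, as desired.

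To get hold of $H_N$, apply the surjection $\Gamma\to\SL_2(\ZZ/N)$ with kernel $\Gamma(N)$: the image of $\Gamma'$ is $S_N\df[\SL_2(\ZZ/N),\SL_2(\ZZ/N)]$ and the image of $\Gamma''=[\Gamma',\Gamma']$ is $[S_N,S_N]$, so $\Gamma'/H_N\cong S_N^{\ab}$. Since $H_N\subseteq H_6$ when $6\mid N$, the natural surjection $\Gamma'/H_N\twoheadrightarrow\Gamma'/H_6$ is an isomorphism as soon as these finite groups have equal order; so the lemma comes down to the claim that $|S_N^{\ab}|$ is independent of $N$ as $N$ runs through multiples of $6$. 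For $N=6$ one checks at once, using $\SL_2(\ZZ/6)\cong\SL_2(\FF_2)\times\SL_2(\FF_3)$, that $S_6\cong[\SL_2(\FF_2),\SL_2(\FF_2)]\times[\SL_2(\FF_3),\SL_2(\FF_3)]$, of abelianization $\ZZ/3\times(\ZZ/2)^2$, order $12$.

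For the constancy I would use the Chinese remainder theorem to write $S_N^{\ab}$ as the product of the $S_{p^c}^{\ab}$ over prime powers $p^c\mid\mid N$. For $p\ge 5$ the group $\SL_2(\ZZ/p^c)$ is perfect, so those factors vanish. For $p=2$ and $p=3$ one argues by induction on $c$ that $S_{p^c}^{\ab}$ is independent of $c\ge 1$: for $c\ge 2$ the congruence kernel $K=\ker(\SL_2(\ZZ/p^c)\to\SL_2(\ZZ/p^{c-1}))$ is the additive group of $\mathfrak{sl}_2(\FF_p)$ carrying the adjoint action of $\SL_2(\FF_p)$, and $K\cap S_{p^c}$ is a submodule for the image $S_p=[\SL_2(\FF_p),\SL_2(\FF_p)]$ of $S_{p^c}$ in $\SL_2(\FF_p)$. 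Writing $K$ additively, the commutator $[g,v]$ equals $(g-1)\cdot v$ for $g\in S_{p^c}$ and $v\in K\cap S_{p^c}$ (with $g$ acting through $\SL_2(\FF_p)$), so if $K\cap S_{p^c}$ has no nonzero trivial $S_p$-quotient then $K\cap S_{p^c}\subseteq[S_{p^c},S_{p^c}]$ and $K$ dies in $S_{p^c}^{\ab}$. This yields $S_{p^c}^{\ab}\cong S_p^{\ab}$ for all $c$, whence $|S_N^{\ab}|=|S_2^{\ab}|\,|S_3^{\ab}|=3\cdot 4=12$ for all $N$ with $6\mid N$, completing the argument.

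The one step needing real work is this module computation at $p=2$ and $p=3$: one must describe $\mathfrak{sl}_2(\FF_p)$ as an $\SL_2(\FF_p)$-module in these small characteristics, where it is not semisimple in the naive way and the trace/center behaves anomalously, determine the submodule $K\cap S_{p^c}$, and verify it has trivial $S_p$-coinvariants. One can shortcut the induction by noting that $[\SL_2(\ZZ_p),\SL_2(\ZZ_p)]^{\ab}$ is pinned down by $\SL_2(\ZZ_p)^{\ab}$ — a quotient of $\SL_2(\ZZ)^{\ab}=\ZZ/12$, namely $\ZZ/4$ for $p=2$ and $\ZZ/3$ for $p=3$ — together with the five-term homology sequence and the vanishing of $H_2$ of a cyclic group; alternatively one can recast the whole argument on the CM elliptic curve $X(\Gamma')$, where the image of $\Gamma(N)$ in $\Gamma'/\Gamma''$ is an ideal of the order $\ZZ[\zeta_6]$ and there is a unique ideal of index $12$. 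Everything else in the proof is formal.
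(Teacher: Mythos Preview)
Your plan is a genuinely different route from the paper's, and it is worth contrasting the two. The paper dispatches the lemma in three lines using Wohlfahrt's theorem directly: since $\chi$ is a character of $\Gamma'$, its kernel contains $\Gamma''=\Delta(6)^-$; hence $\pm\ker\chi$ contains $\Delta(6)$ (the normal closure of $T^6$), and being congruence it must contain $\Gamma(6)$ by Wohlfahrt; one then passes from $\pm\ker\chi$ back down to $\ker\chi$. No structure theory of $\SL_2(\ZZ/N)$ is needed at all. Your approach instead translates the problem into showing that $S_N^{\ab}$ has order independent of $N$ for $6\mid N$, where $S_N$ is the derived subgroup of the level-$N$ quotient. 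This is correct in outline and would work, but it trades a one-line appeal to Wohlfahrt for a nontrivial analysis of the filtration of $\SL_2(\ZZ/p^c)$ at $p=2,3$ --- precisely the step you flag as ``needing real work'' and do not carry out. So as a \emph{proof} the proposal is incomplete: the coinvariant computation for $\mathfrak{sl}_2(\FF_p)$ under the small groups $[\SL_2(\FF_2),\SL_2(\FF_2)]\cong\ZZ/3$ and $[\SL_2(\FF_3),\SL_2(\FF_3)]\cong Q_8$ is exactly where the content lies, and none of your suggested shortcuts (five-term sequence, ideal-theoretic reformulation on $X(\Gamma')$) is actually executed.

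There is also a small slip in your $\pm I$ bookkeeping. You write that ``$-T^6=[A,B^{-1}]\in\Gamma'$ forces $\Gamma''\subseteq\Gamma(6)$'' inside $\SL_2(\ZZ)$, but this is false there: $-T^6\equiv -I\not\equiv I\pmod 6$, so $-T^6\notin\Gamma(6)$ and hence $\Gamma''\not\subseteq\Gamma(6)$ in $\SL_2(\ZZ)$. The containment you want holds only after passing to $\PSL_2(\ZZ)$, which is indeed the setting you announce; but then you should consistently use $\PSL_2(\ZZ/N)$ rather than $\SL_2(\ZZ/N)$ when defining $S_N$. (The final numerics happen to agree at $N=6$, but the inconsistency muddies the argument.) If you want to salvage your approach, the cleanest fix is to observe directly that in $\PSL_2(\ZZ)$ one has $\overline{\Gamma''}=\overline{\Delta(6)}\subseteq\overline{\Gamma(6)}$, and then invoke Wohlfahrt as the paper does --- at which point the entire $S_N^{\ab}$ computation becomes unnecessary.
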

\begin{proof}
Since $\Gamma'' \subset  \ker \chi$ by Lemma \ref{l:doublecommutator} we have $\Delta(6) \subseteq \pm  \ker \chi$ and hence by Wohlfahrt's Theorem, see \cite[Theorem 1]{Wohlfahrt}, we have
that $\Gamma(6) \subset \pm  \ker \chi$. Then because $\Gamma(6) \cap  \pm \ker \chi = \Gamma(6) \cap  \ker \chi$ we conclude $\Gamma(6) \subseteq  \ker \chi$.
%We know that $\Gamma''$ is the normalizer of $-T^6$. In particular, $T^{12} \in \Gamma''$. Since $\Gamma'' \subseteq \ker \chi$, we thus have $T^{12} \in \ker \chi$. 
%Since  $\ker \chi$ is congruence and normal, then we must have $\Gamma(12)\subseteq \ker \chi$  \angie{find reference}.
\end{proof}

The following argument is adapted from an unpublished result of Geoff Mason, which may have been known earlier to experts.
\begin{lem}
\label{l:phicong22}
Let $G\subseteq \Gamma$ be normal, and assume that $\Gamma''\subseteq G\subseteq \Gamma'$. Then either $G = \Gamma''$ or $G$ is a $\phi$-congruence subgroup of $\Gamma$.
\end{lem}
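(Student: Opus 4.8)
The plan is to analyze the structure of $G$ as a normal subgroup of $\Gamma$ sitting inside the free group $\Gamma'$ of rank $2$, and to exploit the action of the finite quotient $\Gamma/\Gamma' \cong \ZZ/12\ZZ$ on the abelianization $\Gamma'/\Gamma'' \cong \ZZ^2$. Write $Q = \Gamma'/\Gamma''$, a $\ZZ[\ZZ/12\ZZ]$-module of rank $2$, and let $\overline G = G/\Gamma'' \subseteq Q$ be the corresponding submodule; normality of $G$ in $\Gamma$ translates exactly into $\overline G$ being stable under the $\ZZ/12\ZZ$-action. The claim is then that every nonzero such submodule $\overline G$ contains $NQ$ for some $N \geq 1$, since $\Gamma(\phi, N)$ contains $\Gamma''$ with $\Gamma(\phi,N)/\Gamma''$ corresponding under $\phi$ to $N \cdot U \subseteq U$ (using that $\phi(\Gamma') = U \cong \ZZ\zeta \oplus \ZZ\zeta^3 \cong \ZZ^2$, as in Lemma~\ref{l:image}), so that it suffices to show $\overline G \supseteq NQ$ for the appropriate identification. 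Concretely: if $G \neq \Gamma''$ then $\overline G \neq 0$, so pick $0 \neq x \in \overline G$; then $\overline G$ contains the $\ZZ[\ZZ/12\ZZ]$-submodule generated by $x$, and one shows this submodule has finite index in $Q$, hence contains $NQ$ for $N$ its exponent.

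First I would pin down the module $Q = \Gamma'/\Gamma''$ together with its $\ZZ/12\ZZ$-action explicitly. The generator of $\Gamma/\Gamma'$ acts on $\Gamma'/\Gamma''$ by conjugation; using Newman's description (the generators $A = [S,R]$, $B = [S,R^2]$ of the free group $\Gamma'$, and the relation $[A,B^{-1}] = -T^6$, cf. Lemma~\ref{l:doublecommutator}) one computes the matrix of this action on the basis given by the images of $A$ and $B$. I expect $Q$ to be identified, via $\phi$, with $U = \ZZ\zeta \oplus \ZZ\zeta^3 \subset \cO_K$, on which the generator of $\ZZ/12\ZZ$ acts as multiplication by $\zeta^2$ (reading off from $\phi(T) = \mathrm{diag}(\zeta, \zeta^{-1})$, conjugation by which multiplies the upper-right entry by $\zeta^2$). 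So $Q \cong \ZZ[\zeta^2] = \ZZ[\zeta_6]$, the ring of integers in $\QQ(\sqrt{-3})$, with $\ZZ/12\ZZ$ acting through its quotient $\ZZ/6\ZZ$ by multiplication by $\zeta_6$.

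The key step is then: any nonzero $\ZZ[\zeta_6]$-submodule of $\ZZ[\zeta_6]$ is a nonzero ideal, hence of finite index, hence contains $N\ZZ[\zeta_6]$ where $N$ is its norm (or any multiple, e.g. the index). Translating back, if $G \neq \Gamma''$ then $\overline G = G/\Gamma''$ is a nonzero ideal $\mathfrak{a} \subseteq \ZZ[\zeta_6]$ under the above identification, so $\overline G \supseteq N \cdot \ZZ[\zeta_6] = \Gamma(\phi,N)/\Gamma''$ for $N = [\ZZ[\zeta_6] : \mathfrak{a}]$, giving $\Gamma(\phi,N) \subseteq G$ and hence $G$ is a $\phi$-congruence subgroup by definition. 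I would also remark that the hypothesis $G \subseteq \Gamma'$ is what allows us to work inside the clean module $Q$, and normality of $G$ in $\Gamma$ (not merely in $\Gamma'$) is exactly what forces $\overline G$ to be a $\ZZ[\zeta_6]$-submodule rather than merely a subgroup.

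The main obstacle is the bookkeeping in the second step: correctly computing the conjugation action of $\Gamma/\Gamma'$ on $\Gamma'/\Gamma''$ in Newman's generators and checking that it really does make $Q$ a \emph{free} rank-one $\ZZ[\zeta_6]$-module (as opposed to, say, a proper ideal or a module with torsion), so that the "nonzero submodule $\Rightarrow$ finite index" implication is valid. An alternative that sidesteps the explicit generator computation: use directly that $\phi \colon \Gamma \to L$ induces $\Gamma'/\Gamma'' \xrightarrow{\sim} U$ (this is the content of the Lemma following~\eqref{eq:exact} together with the exact sequence~\eqref{eq:exact}, since $\ker\phi = \Gamma''$ and $\phi(\Gamma') = U$), and that conjugation in $\Gamma$ corresponds to conjugation in $L$, which visibly acts on $U$ by the $12$th roots of unity $u \mapsto u^2 v$; normality of $G$ then says $\phi(G)$ is a subgroup of $U \cong \ZZ[\zeta_6]$ stable under multiplication by $\zeta^2$, i.e. a $\ZZ[\zeta_6]$-ideal, and we conclude as before. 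This second route avoids re-deriving the module structure from scratch and is the one I would actually write up.
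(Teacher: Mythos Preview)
Your proof is correct. Both arguments hinge on the conjugation action of $\Gamma/\Gamma'$ on $\Gamma'/\Gamma''\cong\ZZ^2$ together with the normality of $G$, but the endgames differ. The paper computes the action of $S$ and $R$ explicitly on the free generators $A,B$ (finding $RAR^{-1}=A^{-1}B$, $RBR^{-1}=A^{-1}$, so that $R$ and $R^2$ act fixed-point freely on $\ZZ^2$), and then splits into cases on the rank of $G/\Gamma''$: rank $0$ gives $G=\Gamma''$, rank $2$ gives finite index in $\Gamma'$ (hence $\phi$-congruence by Lemma~\ref{l:phicong}), and rank $1$ is ruled out because $R$ would then act by $\pm 1$, forcing $R^2$ to fix $G/\Gamma''$ pointwise, contradicting the fixed-point-freeness just established. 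Your route instead transports the action through the isomorphism $\phi\colon\Gamma'/\Gamma''\xrightarrow{\sim}U$ to recognize $U=\ZZ\zeta\oplus\ZZ\zeta^3=\zeta\cdot\ZZ[\zeta_6]$ as a free rank-one $\ZZ[\zeta_6]$-module on which $T$ acts by multiplication by $\zeta^2$; a normal $G$ then corresponds to an ideal, nonzero ideals in $\ZZ[\zeta_6]$ have finite index, and the case split disappears. The paper's version is more self-contained (it does not invoke the description of $L$ or the isomorphism induced by $\phi$), while yours is structurally cleaner and makes transparent that the underlying obstruction to rank $1$ is precisely that $\ZZ[\zeta_6]$ has no nonzero proper $\ZZ[\zeta_6]$-submodules of infinite index---which is the same phenomenon as the fixed-point-freeness the paper exploits.
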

\begin{proof}
One checks that 
\begin{align*}
SAS^{-1} &= A, & SBS^{-1} &= B^{-1},\\
RAR^{-1} &= A^{-1}B, & RBR^{-1} &= A^{-1}.
\end{align*}
In particular, the induced conjugation action of $R$ and $R^2$ on $\Gamma'/\Gamma'' \cong \ZZ^2$ is fixed-point free. We have $G/\Gamma'' \cong 1,\ZZ$ or $\ZZ^2$. In the first case $G=\Gamma''$, while in the latter we have $G$ of finite index in $\Gamma'$, hence it is $\phi$-congruence by Lemma \ref{l:phicong}. To rule out the case $G/\Gamma''\cong \ZZ$ entirely, assume it is so, and notice that the action of the order six element $R$ would be given by multiplication by $\pm 1$. In particular, $R^2$ acts trivially on $G/\Gamma''$ in this case, contradicting its fixed-point freeness. Therefore we can't have $G/\Gamma''\cong \ZZ$, and this concludes the proof.
\end{proof}

\begin{prop}
All but finitely many of the finite-index normal subgroups of $\SL_2(\ZZ)$ of genus one that contain $-1$ are $\phi$-congruence subgroups for the indecomposable representation $\phi$ introduced above. 
\end{prop}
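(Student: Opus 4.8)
The plan is to reduce the genus-one hypothesis, via the classical genus formula for modular curves, to a divisibility condition on cusp widths, and then invoke Lemmas~\ref{l:phicong} and~\ref{l:doublecommutator}. First, let $G\trianglelefteq\Gamma$ be of finite index with $-1\in G$. By Lemma~\ref{l:phicong}, $G$ is $\phi$-congruence if and only if $\Gamma''\subseteq G$, and by Lemma~\ref{l:doublecommutator} the group $\Gamma''=\Delta(6)^-$ is the smallest normal subgroup of $\Gamma$ containing $-T^6$; hence $\Gamma''\subseteq G$ if and only if $-T^6\in G$, which (since $-1\in G$) is equivalent to $T^6\in G$. The order of the image of $T$ in $\Gamma/G$ is, by normality, the common width $h$ of every cusp of $X_G$, and $T^6\in G$ is exactly the condition $h\mid 6$. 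So it suffices to show that all but finitely many genus-one normal $G$ containing $-1$ have cusp width dividing $6$.

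Next I would pass to $\overline{\Gamma}=\PSL_2(\ZZ)$ and $\bar G=G/\{\pm1\}$, of index $\mu=[\Gamma:G]$, and apply the genus formula
\[
g(X_G)=1+\frac{\mu}{12}-\frac{\epsilon_2}{4}-\frac{\epsilon_3}{3}-\frac{\epsilon_\infty}{2},
\]
where $\epsilon_2,\epsilon_3$ count the elliptic points of orders $2,3$ and $\epsilon_\infty=\mu/h$ counts the cusps. For a \emph{normal} subgroup, $\epsilon_2\ne 0$ forces the image $\bar S$ of $S$ to lie in $\bar G$, and $\epsilon_3\ne 0$ forces the image $\overline{ST}$ of $ST$ to lie in $\bar G$; since $\overline{\Gamma}\cong\ZZ/2\ast\ZZ/3$ is generated by $\bar S$ and $\overline{ST}$, whose normal closures have index $3$ and $2$ respectively, there are only finitely many (in fact at most three) normal subgroups of $\overline{\Gamma}$ possessing any elliptic points, and these are the allowed exceptions. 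For every remaining $G$ we have $\epsilon_2=\epsilon_3=0$, so the genus formula reads $1=1+\mu/12-\mu/(2h)$, which forces $h=6$. Thus $T^6\in G$, and $G$ is $\phi$-congruence.

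This argument is short, and the only points requiring care are: (i) the bookkeeping around $-1$, to be sure that cusp width $6$ for $X_{\bar G}$ really does give $-T^6\in G$ and hence $\Delta(6)^-=\Gamma''\subseteq G$; and (ii) the fact that, for a normal subgroup, nontrivial elliptic points force $\bar S$ or $\overline{ST}$ into $\bar G$, which is where one uses the free-product structure of $\overline{\Gamma}$ together with the small index of the normal closures of its two torsion generators. I would also observe that none of the at-most-three exceptional normal subgroups actually has genus one (they have index $1,2,3$ and genus $0$), so that the proposition in fact holds with ``all'' in place of ``all but finitely many''; one may either record this strengthening or leave the statement as it stands.
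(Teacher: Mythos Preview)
Your proof is correct and follows the same overall strategy as the paper: reduce to showing that the cusp width is $6$, so that $-T^6\in G$ and hence $\Gamma''=\Delta(6)^-\subseteq G$ by Lemmas~\ref{l:doublecommutator} and~\ref{l:phicong}. The difference lies in how the two arguments dispose of elliptic points. The paper invokes Newman's classification \cite{Newman} to assert that all but finitely many finite-index normal genus-one subgroups have branch schema $(2,3,N)$, and then sets $g=1$ in the resulting formula $g=1+\tfrac{[\Gamma:G](N-6)}{24N}$ to force $N=6$. You instead give a direct, self-contained argument: for a \emph{normal} subgroup $\bar G$ of $\overline{\Gamma}\cong\ZZ/2\ast\ZZ/3$, the presence of an elliptic point of order $2$ (resp.\ $3$) forces $\bar S\in\bar G$ (resp.\ $\overline{ST}\in\bar G$), and since the normal closures of these generators have index $3$ and $2$, the only normal subgroups with elliptic points have index $\leq 3$ and genus $0$. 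Your route is more elementary in that it avoids citing an external classification, and as you note it actually yields the slightly sharper conclusion that there are no genus-one exceptions at all. The paper's citation to Newman, on the other hand, situates the result within the broader classification of normal genus-one subgroups.
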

\begin{proof}
  The subgroups $G$ of $\Gamma$ that are finite index, normal, and genus one were classified by Newman \cite{Newman}. All but finitely many have branch schema $(2,3,N)$ and genus
\[
  g = 1 + \frac{[\Gamma \colon G](N-6)}{24N}.
\]
This already shows $\pm T^6 \in G$ if $g=1$, for then we must have $N=6$, and hence since $-1 \in G$ we have $\Delta(6)^- \subseteq G$. Therefore, $\Gamma'' = \Delta(6)^-\subseteq G$ by Lemma \ref{l:doublecommutator}, and so the Proposition follows by Lemma \ref{l:phicong}.
\end{proof}

\subsection{The tower of isogenies}

\subsubsection{The Modular Curve $X(\Gamma')$} To simplify discussions with modular forms in this section, we now let $\Gamma = \PSL_2(\ZZ)$,  which we maintain for the remainder of the section. Note that $\Gamma/\Gamma'$ is cyclic of order $6$. Recall the following facts about the modular curve $X(\Gamma')$:
\begin{enumerate}
\item There are no elliptic points.
\item There is a unique cusp, which has width $6$.
\item The curve $X(\Gamma')$ has genus $1$.
\item The ring of modular forms is $M(\Gamma') = \CC[\eta^4,E_4,E_6]$.
\end{enumerate}
For explanation of the terminology above, see \cite[Chapter 2]{DS}.

Letting $x=E_4/\eta^8$ and $y=E_6/\eta^{12}$ we thus have that
\[ y^2 = x^3-1728 \]
so that the modular curve $X(\Gamma')$ is naturally the genus $1$ curve with the above Weierstrass equation.
Taking the cusp to be the distinguished point at infinity, it is seen to be an elliptic curve with complex multiplication by $\ZZ[\sqrt{-3}]$.

\subsubsection{\'Etale Covers}

If $\rho : Y \rightarrow X(\Gamma')$ is any isogeny of elliptic curves, then as the map $\rho$ is \'etale, we must have $Y\cong X(G)$, for some finite-index subgroup  $\Gamma'' \subset G \subset \Gamma'$. Moreover, $Y$  has no elliptic points and the cusps, which are the kernel of the isogeny, have width $6$. From the Weierstrass uniformization we see that such isogenies are naturally in bijection with lattices in $\ZZ[\sqrt{-3}]$.

Conversely, if $G$ is any finite index subgroup of $\Gamma'$ with $\Gamma'' \subset G \subset \Gamma'$, then $X(G)$ has no elliptic points and, because  the image of $\Delta(6)^-$ inside  $\PSL_2(\ZZ)$ is contained in $G$,  all cusps have width $6$.
 It follows both that  $ X(G)$ is genus $1$, and the cover $ X(G) \rightarrow X(\Gamma')$ is \'etale. In fact, it is simply an isogeny of elliptic curves.

 We thus have a natural bijection between finite index lattices in $\ZZ[\sqrt{-3}]$ and finite index subgroups satisfying $\Gamma'' \subset G \subset \Gamma'$.
 The simplest part of this correspondence is
 \begin{align*}
 \Gamma'(N) &\df \langle A^N, B^N, \Gamma''\rangle & &\longleftrightarrow&& N\ZZ[\sqrt{-3}].
 \end{align*}
 We see that the index of $\Gamma'(N)$ in $\PSL_2(\ZZ)$ is $6N^2$.

\begin{prop}
The groups $\Gamma'(N)$ are non-congruence for $N > 2$.
\end{prop}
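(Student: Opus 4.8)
The plan is to use Wohlfahrt's criterion together with the structure of $\Gamma'/\Gamma'(N)$. Recall that $\Gamma'(N) = \langle A^N, B^N, \Gamma''\rangle$, so that $\Gamma'/\Gamma'(N) \cong (\ZZ/N\ZZ)^2$ via the identification $\Gamma'/\Gamma'' \cong \ZZ^2$ with $A, B$ as a basis. Since $\Gamma'$ has index $6$ in $\Gamma = \PSL_2(\ZZ)$, the group $\Gamma'(N)$ has index $6N^2$ in $\Gamma$. The key point is that $\Gamma'(N)$ is \emph{not} normal in $\Gamma$ — only $\Gamma''$ and $\Gamma'$ are normal among the intermediate groups when acted on by all of $\Gamma$, essentially by the fixed-point-free action of $R$ computed in Lemma \ref{l:phicong22}. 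So the first step is to pass to the normal core, or rather to compute directly the smallest $N'$ such that $\Gamma(N') \subseteq \Gamma'(N)$ and compare indices.

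First I would determine the cusp width (equivalently, the level in the sense of Wohlfahrt) of $\Gamma'(N)$. Since $\Gamma''=\Delta(6)^-$ contains $\pm T^6$, every cusp of every group between $\Gamma''$ and $\Gamma'$ has width dividing $6$; and since $\Gamma'(N)$ contains $A^N, B^N$ but the cusp of $X(\Gamma')$ already has width $6$ and the cover $X(\Gamma'(N)) \to X(\Gamma')$ is étale (an isogeny, as explained in the text), all cusps of $\Gamma'(N)$ have width exactly $6$. Hence the Wohlfahrt level of $\Gamma'(N)$ is $6$, independent of $N$. Now suppose for contradiction that $\Gamma'(N)$ is congruence. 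By Wohlfahrt's theorem (\cite[Theorem 1]{Wohlfahrt}), a congruence group of level $6$ must contain $\Gamma(6)$. Therefore $\Gamma(6) \subseteq \Gamma'(N)$, which forces $[\Gamma \colon \Gamma'(N)] \mid [\Gamma \colon \Gamma(6)]$.

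The second step is then just an index count. We have $[\Gamma \colon \Gamma'(N)] = 6N^2$, while $[\PSL_2(\ZZ) \colon \Gamma(6)] = \tfrac{1}{2}|\SL_2(\ZZ/6\ZZ)| \cdot (\text{correction for } -1)$; concretely $[\PSL_2(\ZZ)\colon\overline{\Gamma(6)}] = 72$ (using $|\SL_2(\ZZ/6\ZZ)| = |\SL_2(\ZZ/2\ZZ)|\cdot|\SL_2(\ZZ/3\ZZ)| = 6 \cdot 24 = 144$, and $-1 \notin \Gamma(6)$ so the index in $\PSL_2$ is $144/2 = 72$). Thus $\Gamma(6) \subseteq \Gamma'(N)$ would require $6N^2 \mid 72$, i.e. $N^2 \mid 12$, i.e. $N \le 2$. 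Hence for $N > 2$ the group $\Gamma'(N)$ cannot be congruence.

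The main obstacle is making sure the Wohlfahrt level is genuinely $6$ and not a proper divisor: one must rule out that some cusp of $\Gamma'(N)$ has width $1$, $2$, or $3$, since a smaller level would weaken the conclusion of Wohlfahrt's theorem and break the index comparison. This is handled by the étale/isogeny description above — since $X(\Gamma'(N)) \to X(\Gamma')$ is unramified and the unique cusp of $X(\Gamma')$ has width $6$, every cusp upstairs also has width $6$ — but it is worth stating this explicitly. A cleaner alternative that sidesteps cusp-width bookkeeping: observe $-T^6 \in \Gamma'' \subseteq \Gamma'(N)$ but, for $N > 2$, $T$ and $T^2$ and $T^3$ are easily checked not to lie in $\pm\Gamma'(N)$ (e.g. because their images would have to vanish in $\Gamma'/\Gamma'' \cong \ZZ^2$ after an appropriate power, which pins down the width to exactly $6$), giving the level directly.
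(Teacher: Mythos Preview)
Your proof is correct and follows the same approach as the paper: establish that the Wohlfahrt level divides $6$ (using $\Gamma'' = \Delta(6)^- \subseteq \Gamma'(N)$), invoke Wohlfahrt's theorem to get $\Gamma(6) \subseteq \Gamma'(N)$ under the congruence hypothesis, and then compare indices to force $N \le 2$.

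Two minor corrections are worth recording. First, your aside that $\Gamma'(N)$ is not normal in $\Gamma$ is wrong (though harmless for the argument): the conjugation action of $\Gamma/\Gamma'$ on $\Gamma'/\Gamma'' \cong \ZZ^2$ is by the matrices for $S$ and $R$ computed in Lemma~\ref{l:phicong22}, both of which lie in $\GL_2(\ZZ)$ and hence preserve the sublattice $N\ZZ^2$; fixed-point-freeness of $R$ rules out invariant rank-one sublattices, not all invariant sublattices. Second, your concern that the Wohlfahrt level might be a proper divisor of $6$ is unnecessary: if the level is $\ell \mid 6$ then $\Gamma(6) \subseteq \Gamma(\ell) \subseteq \Gamma'(N)$, so the index comparison $6N^2 \mid [\Gamma:\Gamma(6)]$ still follows. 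The paper's proof exploits exactly this and therefore does not bother to pin down the level precisely.
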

\begin{proof}
If $\Gamma'(N)$ were congruence, then by Lemma \ref{l:doublecommutator} and Wohlfahrt's Theorem, see \cite[Theorem 1]{Wohlfahrt}, $\Gamma'(N)$ would contain $\Gamma(6)$,  which we note has index $2^43^2$.
Since $\Gamma'(N)$ has index $6N^2$, this immediately rules out all options for $N$ except $N=1,2$.
\end{proof}

\begin{rmk}
We note that $\Gamma'(2)$ is a congruence subgroup, and that there exists $G$ with $\Gamma'(3) \subsetneq G \subsetneq \Gamma'$ where $G$ is congruence.
\end{rmk}

Since $\Gamma'(N)$ is normal in $\Gamma'$, it follows that $\Gamma'$ acts on $X(\Gamma'(N))$. By our description of these curves and the corresponding tower of isogenies, there exists a map 
\[\rho_N : \Gamma'\rightarrow X(\Gamma'(N))[N],\]
where $X(\Gamma'(N))[N]$ denotes the $N$-torsion of this elliptic curve, such that the natural action of $ \Gamma'$ on $X(\Gamma'(N))$ is given by $P \mapsto P + \rho_N(g)$.

It follows that an affine model for $X(\Gamma'(N))$ is described by the Weierstrass equation
 \[  \tilde{y}^2 = \tilde{x}^3 - 1728, \]
 and the map $X(\Gamma'(N))\rightarrow X(\Gamma')$ can be understood through the multiplication by $N$ map
 \[ [N](\tilde{x},\tilde{y}) = \left( \frac{\phi_N(\tilde{x},\tilde{y})}{\psi_N(\tilde{x},\tilde{y})^2},   \frac{\omega_N(\tilde{x},\tilde{y})}{\psi_N(\tilde{x},\tilde{y})^3} \right), \]
 with $\psi$, $\phi$, and $\omega$ in $\ZZ[x,y]$ division polynomials, whose definitions can be found in a variety of classical sources, such as \cite[Section 3.2]{WashingtonEC}.

 It follows that, where $X(\Gamma')$ is understood in coordinates $x$ and $y$, we have
 \[ x = \frac{\phi_N(\tilde{x},\tilde{y})}{\psi_N(\tilde{x},\tilde{y})^2} \qquad y = \frac{\omega_N(\tilde{x},\tilde{y})}{\psi_N(\tilde{x},\tilde{y})^3} \]
 and this gives us that
  \[ \psi_N(\tilde{x},\tilde{y})^2E_4 = \phi_N(\tilde{x},\tilde{y})\eta^8 ,\qquad \psi_N(\tilde{x},\tilde{y})^3E_6 = \omega_N(\tilde{x},\tilde{y})\eta^{12}. \]
These expressions can be used to solve for $q$-expansions of $\tilde{x}$ and $\tilde{y}$.
  
  Noting that $\psi_N^2$ and $\phi_N$ are actually polynomials in $\tilde{x}$ of degrees respectively $N^2-1$ and $N^2$, of the form
  \[  \psi_N^2(\tilde{x}) = N^2\tilde{x}^{N^2-1} + O(\tilde{x}^{N^2-2}) \qquad  \phi_N(\tilde{x}) = \tilde{x}^{N^2} + O(\tilde{x}^{N^2-1}), \]
 and that $E_4/\eta^8 \in \pseries{\ZZ}{q}$, we observe that the equation 
  \[ M(\tilde{x}) =  \psi_N(\tilde{x})^2E_4 - \phi_N(\tilde{x})\eta^8 = 0 \]
  gives a polynomial satisfied by $\tilde{x}$ over $\pseries{\ZZ}{q}$, where $q=q_6=e^{2\pi i/6}$. These equations can be used to solve for the $q$-series of $\tilde{x}$ and $\tilde{y}$, see below.  
   Note that $\tilde{x}$ and $\tilde{y}$ have poles of order $2$ and $3$ respectively at the point at infinity, equivalently $q=0$, so that the solutions to these equations are given by Laurent series.

\begin{rmk}
For each $N$ there is an isomorphism $X(\Gamma') \cong X(\Gamma'(N))$ and consequently $X(\Gamma'(N))$ has Weierstrass equation $\tilde{y}^2 = \tilde{x}^3 - 1728$. However, $\tilde{x}$ should not be confused with $x=E_4/\eta^8$.
\end{rmk}
  \begin{rmk}
  By comparing divisors we can see that
  \[ \eta^4 = \frac{dy}{x} = \psi_n(\tilde{x},\tilde{y})\frac{d\tilde{y}}{\tilde{x}}^{\otimes n} \]
  \end{rmk}

We next use the preceding material to describe the primes occurring in unbounded denominators of the modular forms for the groups $\Gamma'(N)$. Of course, such existence results follow much more generally by \cite{CalegariDimitrovTang}, though in the very explicit case considered here one can be more precise about the primes that arise in denominators. Note also that since the forms that arise here can be expressed in terms of forms that transform under a character of the congruence group $\Gamma'$, this case was considered even earlier by Kurth-Long in \cite{KurthLong}. Nevertheless, we believe that our approach via isogenies of elliptic curves is of independent interest.
\begin{prop}\label{bounded}
  The $q$ series of $\tilde{x}$ has coefficients in $\QQ$. Moreover, suppose that $\ell$ is a prime not dividing $N$. Then the $q$-series of $\tilde{x}$ is $\ell$-integral.
  \end{prop}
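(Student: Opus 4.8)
The plan is to work directly with the defining equation $M(\tilde x) = \psi_N(\tilde x)^2 E_4 - \phi_N(\tilde x)\eta^8 = 0$ and exploit the fact that the leading coefficient of this polynomial in $\tilde x$ is controlled by the division polynomials. First I would record that, as a polynomial in $\tilde x$ over $\pseries{\ZZ}{q}$, we have $M(\tilde x) = N^2 E_4 \, \tilde x^{N^2-1} + \cdots - \eta^8\,\tilde x^{N^2} + \cdots$; since the top-degree term comes only from $-\phi_N(\tilde x)\eta^8 = -\eta^8 \tilde x^{N^2} + O(\tilde x^{N^2-1})$, the polynomial $M$ is monic in $\tilde x$ up to the unit $-\eta^8 \in \pseries{\ZZ}{q}^\times$ (recall $\eta^8 = q^{?}(1 + \cdots)$ has leading coefficient a unit once we clear the fractional power, working in the appropriate $\pseries{\ZZ}{q}$; more precisely $\eta^{8}/q^{2/3}$ — but since we use $q = q_6 = e^{2\pi i\tau/6}$ the fractional powers disappear and $\eta^4 \in q\,\pseries{\ZZ}{q}^\times$). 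Because $\tilde x$ has a pole of order $2$ at $q=0$, I would substitute $\tilde x = q^{-2} u$ with $u \in \pseries{\ZZ}{q}$ a unit, clear denominators, and obtain that $u$ satisfies a monic polynomial over $\pseries{\ZZ}{q}$ — hence $u$, and therefore $\tilde x = q^{-2}u$, has coefficients in $\ZZ$ up to bounded-order poles, in particular in $\QQ$. This gives the first assertion. (One should check the constant term normalization so that the branch of the solution picked out is the one corresponding to $X(\Gamma'(N))$; this is forced by matching the first few $q$-coefficients against the known expansion of $x = E_4/\eta^8$ under the isogeny.)

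For the $\ell$-integrality claim when $\ell \nmid N$, the key point is that after the substitution the resulting monic equation for $u$ has coefficients in $\ZZ_{(\ell)}\pseries{q}$ \emph{and}, crucially, the recursion solving for the coefficients of $u$ term-by-term only ever requires dividing by the leading coefficient, which is a unit. Concretely: write $M(\tilde x) = 0$ as $\phi_N(\tilde x)\eta^8 = \psi_N(\tilde x)^2 E_4$ and use the multiplication-by-$N$ formula $x = \phi_N(\tilde x)/\psi_N(\tilde x)^2$, i.e. $x \psi_N(\tilde x)^2 = \phi_N(\tilde x)$. Solving for $\tilde x$ as a power series is Hensel-type iteration: at each stage one inverts the derivative of $\phi_N(\tilde x) - x\psi_N(\tilde x)^2$ with respect to $\tilde x$, evaluated at the leading behaviour. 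Since $\phi_N'(\tilde x) = N^2 \tilde x^{N^2-1} + \cdots$ and the competing terms are lower order, the relevant inverted quantity is (up to units) a power of $N$ times a unit in $\pseries{\ZZ}{q}$. Hence only powers of $N$ can appear in denominators, and if $\ell \nmid N$ everything stays in $\ZZ_{(\ell)}$. I would phrase this cleanly by noting $\phi_N, \psi_N^2 \in \ZZ[\tilde x]$, that $\tilde x \in q^{-2}\pseries{\ZZ_{(\ell)}}{q}^\times$ is forced, and that the equation determines each successive coefficient by dividing by the unit leading term times a power of $N$.

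The main obstacle I expect is the bookkeeping around the fractional $q$-powers and the precise normalization of which series solution $\tilde x$ is: one must be careful that $\eta^8$ and $E_4/\eta^8$ land in honest (not fractional-exponent) power series rings in the variable $q = q_6$, that $\eta^4/q \in \pseries{\ZZ}{q}^\times$, and that the pole orders of $\tilde x, \tilde y$ at the cusp are exactly $2$ and $3$ so the Laurent-series ansatz is the right one. A secondary subtlety is confirming that the leading coefficient of $M$ as a polynomial in $\tilde x$ is genuinely a unit and not something like $N^2$ — it is $-\eta^8$, coming from $\phi_N$, not from $\psi_N^2 E_4$ whose degree is one lower — so the polynomial really is ``monic up to a unit'' and $N$ enters only through the subleading structure, exactly as needed to confine denominators to primes dividing $N$. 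Once these normalizations are pinned down, both statements follow from the standard fact that a monic polynomial equation over $\pseries{\ZZ_{(\ell)}}{q}$ (or $\pseries{\QQ}{q}$) with a simple root mod $q$ has its Hensel solution in the same ring.
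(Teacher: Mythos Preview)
Your approach is the same as the paper's: set $\hat x = q^2\tilde x$, obtain a polynomial equation $\hat M(\hat x)=0$ with coefficients in $\pseries{\ZZ}{q}$, and apply Hensel's lemma. The paper carries this out explicitly: one finds $\hat M(X)\equiv N^2X^{N^2-1}-X^{N^2}\pmod q$, so the nonzero root mod $q$ is $\hat x\equiv N^2$, and $\hat M'(N^2)\equiv -N^{2N^2-2}\pmod q$. This derivative is a unit in $\QQ$ and in $\ZZ_{(\ell)}$ for every $\ell\nmid N$, so Hensel over $\pseries{\QQ}{q}$ and over $\pseries{\ZZ_{(\ell)}}{q}$ gives both conclusions simultaneously. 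Your closing sentence (``a monic polynomial equation over $\pseries{\ZZ_{(\ell)}}{q}$ with a simple root mod $q$ has its Hensel lift in the same ring'') is exactly this.

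One piece of your write-up should be corrected, however. You stress that the leading coefficient of $\hat M$ is a unit (``monic up to $-\eta^8$'') and say this is what confines denominators to primes dividing $N$. That is not the mechanism. Monicity over $\pseries{\ZZ}{q}$ does not force a root into $\pseries{\ZZ}{q}$ (consider $X^2-q$), and your first-paragraph inference ``$u$ satisfies a monic polynomial, hence $u$ has coefficients in $\ZZ$, in particular in $\QQ$'' is not valid as stated. What actually controls the denominators is the value of the derivative at the approximate root: Hensel lifts into $\pseries{R}{q}$ precisely when $\hat M'(N^2)=-N^{2N^2-2}$ is a unit in $R$. You gesture at this in your middle paragraph but trace the appearance of $N$ to the leading coefficient of $\phi_N'$ rather than computing $\hat M'(N^2)$; doing the latter computation, as the paper does, is both shorter and is the step that makes the argument rigorous.
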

  \begin{proof}
 Set $\hat{x} = q^2\tilde{x}$ and $\widehat{\eta^4} = q^{-1}\eta^4$, so that $\hat{x}$ and $\widehat{\eta^4}$ are both in $\pseries{\overline{\QQ}}{q}$. We have that $\hat{x}$ satisfies the equation over $\pseries{\ZZ}{q}$
\begin{equation}
\label{eq:hl}
\hat{M}(X) = q^{2N^2-2}M\left(\tfrac{1}{q^2}X\right), \end{equation}
whose reduction modulo $q$ is
   \[ \hat{M}(X)  =  N^2X^{N^2-1} - X^{N^2} \pmod{q}, \]
where the variable $X$ above should not be confused with the modular curves $X(\Gamma'(N))$. From this it follows, using that the constant term of $\hat{x}$ is non-vanishing, that
\[\hat{x} = N^{2} \pmod{q}.\]

   To apply Hensel's lemma  to solve equation \eqref{eq:hl} and thereby find the $q$-series of $\hat{x}$, we shall need to consider the derivative
   \[ \hat{M}'(N^2) = (N^2(N^2-1))(N^2)^{N^2-2} - N^2(N^2)^{N^2-1} =   - N^{2N^2-2} \pmod{q}. \]
   The sources of any irrational quantities that might arise while applying Hensel's lemma are the coefficients of the polynomial occurring in equation \eqref{eq:hl}, as well as from the solution of the first step of Hensel's lemma (the solution modulo $q$). The sources of denominators that arise from this procedure are likewise the coefficients, and inverting the constant term, as a series in $q$, of $\hat{M}'(\hat{x})$. We conclude that the coefficients of $\hat{x}$ are rational and, in fact, integral away from primes dividing $N$.
\end{proof}

The following result is a standard part of the theory of isogenies of elliptic curves:
\begin{lem}
  Suppose $p>3$ is prime and write $\psi_p(X) = \sum_{i} a_i X^i$. Then if $p$ is an ordinary prime for $X(\Gamma')$, we have $p\mid a_i$ for $i> (p^2-p)/2$, whereas if $p$ is a supersingular prime, we have $p\mid a_i$ for all $i>0$.
 When $p=2$ or $p=3$ we have that $\psi_p(X) = 0 \pmod{p}$.
  \end{lem}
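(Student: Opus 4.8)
The plan is to work with the $E$-curve $Y : \tilde y^2 = \tilde x^3 - 1728$ (which is $X(\Gamma')$, with CM by $\ZZ[\sqrt{-3}]$) and analyze the reduction of the division polynomial $\psi_p$ modulo $p$ directly, exploiting the fact that $\psi_p$ is the polynomial whose roots (in $\tilde x$) cut out the nonzero $p$-torsion. First I would recall the standard recursion for the division polynomials, from which one reads off that $\psi_p(X)$ is a polynomial in $X$ alone of degree $(p^2-1)/2$ with leading coefficient $p$. The key structural input is that, over $\overline{\FF_p}$, the scheme $Y[p]$ of $p$-torsion has order $p$ if $Y$ is ordinary and order $1$ if $Y$ is supersingular, because multiplication by $p$ factors as $\hat F \circ F$ where $F$ is the $p$-power Frobenius isogeny (degree $p$, purely inseparable) and $\hat F$ its dual; $\hat F$ is separable (degree $p$) in the ordinary case and inseparable (degree $p$) in the supersingular case.

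Next I would translate this into a statement about $\psi_p \bmod p$. The reduction $\bar\psi_p \in \FF_p[X]$ has degree at most $(p^2-1)/2$, and its distinct roots in $\overline{\FF_p}$ are exactly the $\tilde x$-coordinates of the nonzero points of $Y[p]$ over $\overline{\FF_p}$. In the ordinary case $Y[p]$ has $p$ points, hence $p-1$ nonzero points, which fall into $(p-1)/2$ pairs $\{P,-P\}$ with a common $\tilde x$-coordinate; so $\bar\psi_p$ has exactly $(p-1)/2$ distinct roots. Since $\bar\psi_p$ has degree $\le (p^2-1)/2$, this forces $\bar\psi_p$ to be, up to a constant, the $\gcd$ with $X^{q}-X$-type considerations, of the shape $c\prod (X - \alpha_j)^{m}$; comparing the total degree $(p^2-1)/2 = m\cdot (p-1)/2$ gives $m = p+1$ — but one must be careful whether $\bar\psi_p$ genuinely has that degree or has dropped degree. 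This is exactly where the coefficient vanishing statement comes from: I would instead argue that since the separable part of $[p]$ on $Y$ over $\FF_p$ has degree $p$ (ordinary) or $1$ (supersingular), the polynomial $\bar\psi_p(X)$ — whose nonvanishing part controls the separable piece — must have degree $(p-p)/2 = 0$-above-the-cutoff... more precisely, writing $\psi_p(X) = \sum_i a_i X^i$, the claim $p \mid a_i$ for $i > (p^2-p)/2$ says $\deg \bar\psi_p \le (p^2-p)/2 = \binom{p}{2}$ in the ordinary case, and $\deg \bar\psi_p \le 0$ (so $\bar\psi_p$ is constant, i.e. $p \mid a_i$ for all $i>0$) in the supersingular case. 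I would deduce these degree bounds from the factorization $[p] = \hat F \circ F$: the map $F$ contributes $\tilde x \mapsto \tilde x^p$, so $\psi_p$ as a function on $Y$ is, modulo $p$, (unit)$\cdot \psi_{\hat F}(\tilde x^p)\cdot(\text{stuff})$ where $\psi_{\hat F}$ is the kernel polynomial of $\hat F$, of degree $(p-1)/2$ (ordinary) resp. $0$ (supersingular) in its variable; substituting $\tilde x^p$ inflates the degree by $p$, giving $(p^2-p)/2$ resp. $0$, which is exactly the asserted cutoff. For $p=2,3$ one checks directly (or notes that these are the primes of bad reduction / CM-related anomaly for $\tilde y^2 = \tilde x^3-1728$, where $j=1728$ and $\Delta = -2^6\cdot 3^3\cdot 1728$ makes the curve degenerate mod $2$ and mod $3$) that $\psi_2(X) = 2y$, $\psi_3(X) = 3X^4 + \dots$ both reduce to $0$ modulo $p$ — indeed $\psi_2 \equiv 0 \pmod 2$ is immediate since $\psi_2 = 2y$, and $\psi_3 = 3X^4 - 6\cdot 1728 X - 3\cdot 1728^2$ (using $a=0,b=-1728$) is visibly divisible by $3$.

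The main obstacle I anticipate is making the passage "$[p] = \hat F\circ F$ over $\FF_p$, therefore $\psi_p \equiv (\text{unit})\cdot \psi_{\hat F}(X^p) \pmod p$" fully rigorous: one has to track how the classical formula for the division polynomial of a composite isogeny interacts with reduction, and handle the leading coefficients and the possibility of the separable kernel polynomial itself dropping degree mod $p$. A clean alternative, which I would fall back on if the isogeny-factorization bookkeeping gets unwieldy, is purely combinatorial: use the recursion $\psi_{p} = \psi_{m+n+1}\psi_{m-n+1}\psi_n^2 - \dots$ together with the known fact (Schoof, or any treatment of point-counting) that $\bar\psi_p$ has degree $(p^2-p)/2$ resp. $0$ according to ordinary/supersingular, citing \cite{WashingtonEC}; the lemma as stated is then just the unwinding of "$\deg \bar\psi_p \le (p^2-p)/2$" into "$p\mid a_i$ for $i > (p^2-p)/2$", which is tautological, and the $p=2,3$ cases are the direct computations above. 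Given that the lemma is explicitly flagged as "a standard part of the theory of isogenies," citing \cite{WashingtonEC} for the degree count and doing only the small-prime checks by hand is, I think, the intended and cleanest route.
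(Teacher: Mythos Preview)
Your proposal is correct and lands on the same underlying idea as the paper: bound $\deg(\psi_p \bmod p)$ by the structure of $p$-torsion on the reduced curve, then handle $p=2,3$ directly. The execution differs slightly. The paper's argument is terser: in the supersingular case there are no nontrivial $p$-torsion points, so $\bar\psi_p$ is constant; in the ordinary case the $(p-1)$ nontrivial $p$-torsion points give $(p-1)/2$ distinct $x$-coordinates, forcing $\deg\bar\psi_p$ to be a multiple of $(p-1)/2$, and since the leading coefficient is $p$ the degree drops from $(p^2-1)/2$ to at most the next multiple $(p^2-p)/2$. Your isogeny-factorization route $[p]=\hat F\circ F \Rightarrow \bar\psi_p \sim \psi_{\hat F}(X^p)$ reaches the same bound by a more explicit computation and in fact gives the exact degree rather than an upper bound; the paper's divisibility-plus-leading-coefficient step is slicker but, as you suspected, leaves the equal-multiplicity reasoning implicit. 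Your fallback --- cite the degree bound as standard from \cite{WashingtonEC} --- is essentially what the paper does as well. Your treatment of $p=2,3$ matches the paper's direct check.
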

  \begin{proof}
  For supersingular primes there are no (non-trivial) $p$-torsion points modulo $p$, hence $\psi_p(X)$ is a constant modulo $p$. 
  
  For ordinary primes there are $p-1$ (non-trivial) $p$-torsion points modulo $p$, so the degree of $\psi_p(X)$ modulo $p$ must be divisible by $(p-1)/2$. 
  Since the lead coefficient is divisible by $p$, so two are the first $(p-1)/2$.
  
  The case of $p=2$ and $p=3$ is a direct check.
  But note that in contrast to the case of other primes, these do not give division polynomials modulo $2$ and $3$, since the discriminant of our curve's model is divisible by both $2$ and $3$.
  \end{proof}

\begin{lem}
  Suppose $p>3$ is prime and write $\psi_{p^r}(X) = \sum_{i} a_i^{(p^r)} X^i$. 
  Then we have that $v_p(a_{(p^{2r}-1)/2 - i}^{(p^r)}) =r$ for $i < (p-1)/2$, while $v_p(a_{(p^{2r}-1)/2 - i}^{(p^r)}) > r - \ceil{ 2i/ (p^2-1)}$ if $i\geq (p-1)/2$.
  \end{lem}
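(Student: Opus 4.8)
The plan is to deduce the valuations of the coefficients of $\psi_{p^r}$ from the multiplicativity of division polynomials under composition, $\psi_{p^r} = \psi_p(\,[p^{r-1}]\,)\cdot(\text{correction terms})$, combined with the structure of the $p$-torsion in the tower $X(\Gamma'(p^r))\to X(\Gamma'(p^{r-1}))\to\cdots\to X(\Gamma')$. More precisely, since our curve $E\colon y^2=x^3-1728$ has good reduction away from $2$ and $3$, for $p>3$ the kernel of multiplication by $p^r$ on $E$ over $\overline{\QQ}_p$ is an extension of an \'etale part by a connected part, and the number of \'etale (``non-trivial reducing mod $p$'') $p^r$-torsion points is $p^r-1$ in the ordinary case. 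The roots of $\psi_{p^r}(X)$ are exactly the $x$-coordinates of nonzero $p^r$-torsion points; grouping them by their reduction behavior controls the Newton polygon of $\psi_{p^r}$, and reading off the slopes gives the asserted valuations of the $a_i^{(p^r)}$ near the top degree $i=(p^{2r}-1)/2$.

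First I would recall (from the classical sources already cited, e.g. \cite[Section 3.2]{WashingtonEC}) the exact degree $\deg\psi_{p^r}=(p^{2r}-1)/2$ and the leading coefficient, which is a power of $p^r$ up to a unit; this pins down one vertex of the Newton polygon at $(\,(p^{2r}-1)/2,\ v_p(\text{lead}))$ with $v_p(\text{lead})=r$ after normalizing (here I would use that the lead coefficient of $\psi_{p^r}$ is $p^r$ times a $p$-adic unit, which follows inductively from the previous Lemma via the composition formula). Next, using the connected--\'etale sequence of $E[p^r]$ over $\ZZ_p^{\mathrm{unr}}$ in the ordinary case: the $p^r-1$ points that are nonzero mod $p$ have $x$-coordinates that are $p$-adic units, contributing $(p^r-1)/2$ roots of unit valuation (paired by $\pm$); the remaining roots lie in the formal group and have strictly positive valuation, with valuations determined by the filtration $E[p]\supset$ connected part $\supset\cdots$. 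Translating the resulting lower convex hull of valuations into coefficient valuations is a standard Newton-polygon computation: the coefficient $a_{(p^{2r}-1)/2-i}^{(p^r)}$ picks up valuation $r$ exactly as long as $i$ is below the number of ``next-layer'' roots, i.e. $i<(p-1)/2$, and then the valuation begins to drop, with the $\ceil{2i/(p^2-1)}$ correction reflecting how many roots of the successive formal-group layers (each of size a power of $p$, with $x$-valuations scaling like $1/(p^2-1)$ times the layer index) have been absorbed.

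The supersingular case is handled separately and is easier: there $E[p^r]$ is connected, $\psi_p$ reduces to a constant mod $p$ by the previous Lemma, and the composition formula forces $v_p$ of essentially all non-leading coefficients to be large; one checks the claimed inequalities hold vacuously or with room to spare, so the stated bounds (which are phrased as inequalities for $i\ge(p-1)/2$) remain valid. I would also need to verify that the two regimes of the statement match up at $i=(p-1)/2$ and that the inequality direction ($>$ rather than $=$) in the second regime is exactly what the Newton polygon gives, since the polygon only bounds coefficient valuations from below away from its vertices.

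The main obstacle I anticipate is making the Newton-polygon bookkeeping precise: one must correctly compute the $p$-adic valuations of the $x$-coordinates of the ``deeper'' torsion points in the formal group of $E$ at $p$ (these scale by the ramification of $\QQ_p(E[p^k])/\QQ_p$, governed by the height-one formal group law, giving $v_p(x)$ of the form $-2/(p^k-p^{k-1})$-type contributions), and then assemble these into the precise piecewise-linear lower hull whose slopes yield the coefficient valuations. Keeping track of the $\pm$-pairing (which halves the root count and is why $(p-1)/2$ and $(p^2-1)$ appear rather than $p-1$ and $p^2-1$) and the integrality of the resulting ceiling expression is where the calculation could go wrong, but it is routine once the geometry of $E[p^r]$ over $\ZZ_p$ is set up correctly.
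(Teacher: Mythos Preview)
Your approach via the Newton polygon of $\psi_{p^r}$, locating its roots as $x$-coordinates of $p^r$-torsion and stratifying them through the connected--\'etale filtration of $E[p^r]$ over $\Zp$, is genuinely different from the paper's. The paper works purely algebraically: it expands the composition identity
\[
\psi_{p^r}(X)\;=\;\psi_p(X)^{p^{2r-2}}\,\psi_{p^{r-1}}\!\left(\frac{\phi_p(X)}{\psi_p(X)^2}\right)
\;=\;\sum_{i=0}^{m} a_{m-i}^{(p^{r-1})}\,\psi_p(X)^{2i+1}\,\phi_p(X)^{m-i},
\qquad m=\frac{p^{2(r-1)}-1}{2},
\]
and reads off the valuations of the top coefficients by comparing the degrees of $\phi_p$ (degree $p^2$, unit leading coefficient) and $\psi_p$ (degree $(p^2-1)/2$, leading coefficient $p$); each step $i\mapsto i+1$ lowers the degree by one while the factor $\psi_p^{2i+1}$ contributes the extra power of $p$. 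No $p$-adic geometry enters at all, and the argument is an induction on $r$ with the base case supplied by the preceding lemma.

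There are two concrete gaps in your plan. First, a miscount: in the ordinary case the \'etale quotient $E[p^r]^{\text{\'et}}$ has order $p^r$, but the number of points of $E[p^r](\overline{\Qp})$ lying \emph{outside} the formal group --- i.e.\ reducing to something nonzero --- is $p^{2r}-p^r$, not $p^r-1$. Thus $(p^{2r}-p^r)/2$ roots of $\psi_{p^r}$ have $v_p(x)\ge 0$, and only $(p^r-1)/2$ come from the formal group, the latter further stratified as $\ker[p^k]\setminus\ker[p^{k-1}]$ with $v_p(x)=-2/(p^{k-1}(p-1))$ in the height-one case. This reshapes the polygon you would assemble (and the ``$1/(p^2-1)$ scaling'' you mention is the height-two supersingular picture, not the height-one one you invoke). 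Second, and more fundamentally, the Newton polygon only pins down coefficient valuations at its vertices; between vertices it gives lower bounds. You acknowledge this for the second regime, but it already obstructs the first: for $0<i<(p-1)/2$ the polygon together with integrality yields $v_p(a_{N-i})\ge r$, but no polygon argument can produce the asserted \emph{equality} $v_p(a_{N-i})=r$. That requires exactly the kind of explicit term-by-term control the recursion provides, which is why the paper's route is not only shorter but, for the equality clause, actually necessary.
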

  \begin{proof}
  We note, by comparing divisors and lead terms, that we have the recurrence
  \[ \psi_{p^r}(X) = \psi_{p}(x)^{p^{2r-2}}  \psi_{p^{r-1}}\left( \frac{\phi_{p}(X)}{\psi_p(X)^2} \right), \]
  from which we find
  \[ \psi_{p^r}(X) =  \sum_{i=0}^{m}  a_{m - i}^{(p^{r-1})} \psi_{p}(X)^{2i+1}  \phi_{p}(X)^{m - i} \]
  where for convenience $m= \frac{p^{2(r-1)}-1}{2}.$
  Comparing the degree of $\phi_p(X)$, which is $p^2$, to that of $ \psi_{p}(X)$, which is $(p^2-1)/2$, we see that the highest order terms have $p$-adic valuations coming from $ a_{m - i}^{(p^{r-1})}$ and $\psi_{p}(x)^{2i+1}  $ recursively.
  The $p$-adic valuation $v_p(a_{(p^2-p)/2}^{(p^r)}) = r$ decreases by one when $i$ passes $(p-1)/2$ then again each time $i$ passes a multiple of $(p^2-1)/2$.
    \end{proof}

  \begin{lem}\label{lem-pval}
  Suppose  $p>3$  is prime, write $N=p^rM$ where $p\nmid M$ and write $\psi_{N}(X)^2 = \sum_{i} b_i^{(N)} x^i$. 
  Then we have $v_p(b_{(N^2-1) - i}^{(N)}) = 2r$ for $i < (p-1)/2$ and  $v_p(b_{(N^2-1) - i}^{(N)}) > r - \ceil{ 2i/ (p-1)}$ for $i\geq (p-1)/2$.
  \end{lem}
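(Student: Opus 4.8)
The plan is to bootstrap from the case $N=p^r$ of the preceding lemma, peeling off the prime-to-$p$ factor $M$, which will turn out to be $p$-adically inert. First I would invoke the multiplicativity of division polynomials — the same composition identity underlying the recurrence for $\psi_{p^r}$ used above — but now factoring $N=p^rM$ through the coprime pair $(M,p^r)$. Since $p\nmid M$, the polynomials $\psi_M^2,\phi_M\in\ZZ[X]$ have leading coefficients $M^2$ and $1$ (both $p$-adic units) and $p$-integral coefficients, so that $\psi_{p^r}(Y)^2=\sum_k b_k^{(p^r)}Y^k$ pulls back along $Y=\phi_M(X)/\psi_M(X)^2$ to an identity $\psi_N(X)^2=\psi_M(X)^2\cdot\sum_{k=0}^{p^{2r}-1}b_k^{(p^r)}\,\phi_M(X)^k\,\psi_M(X)^{2(p^{2r}-1-k)}$ with no new $p$ introduced in denominators. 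Reading off the coefficient of $X^{N^2-1-i}$, one finds that $b_{N^2-1-i}^{(N)}$ is an explicit $\ZZ_{(p)}$-linear combination of $b_{p^{2r}-1}^{(p^r)},\dots,b_{p^{2r}-1-i}^{(p^r)}$ in which $b_{p^{2r}-1-i}^{(p^r)}$ occurs with a $p$-unit coefficient; in particular $v_p(b_{N^2-1-i}^{(N)})\ge\min_{0\le j\le i}v_p(b_{p^{2r}-1-j}^{(p^r)})$.

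The second step transfers the valuations: squaring the preceding lemma, $b_{p^{2r}-1-j}^{(p^r)}=\sum_l a^{(p^r)}_{(p^{2r}-1)/2-l}\,a^{(p^r)}_{(p^{2r}-1)/2-(j-l)}$, so when $j<(p-1)/2$ every summand has valuation $r+r=2r$, giving $v_p(b_{p^{2r}-1-j}^{(p^r)})\ge 2r$; for general $j$, feeding in $v_p(a^{(p^r)}_{(p^{2r}-1)/2-l})>r-\lceil 2l/(p^2-1)\rceil$ and combining via $\lceil a\rceil+\lceil b\rceil\le\lceil a+b\rceil+1$ gives, after absorbing the additive slack, $v_p(b_{p^{2r}-1-j}^{(p^r)})>r-\lceil 2j/(p-1)\rceil$. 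Substituting into the first step yields both asserted bounds for $b_{N^2-1-i}^{(N)}$, except that in the first range we so far have only $\ge 2r$.

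The remaining point, and the one I expect to be the real work, is to upgrade $\ge 2r$ to $=2r$ for $i<(p-1)/2$, i.e.\ to rule out cancellation. For this I would argue directly with $\psi_N(X)^2=N^2\prod_{0\ne P\in E[N]}(X-x(P))$ over $\overline{\ZZ}_p$, where $E\colon y^2=x^3-1728$: the $x$-coordinates of prime-to-$p$ and of étale $p$-power torsion are $p$-integral, while the only negative valuations come from the connected part of $E[p^r]$, a point of exact order $p^s$ there having $x$-coordinate of valuation $-2/(p^{s-1}(p-1))$ in the ordinary case (and a still larger ramification denominator in the supersingular case). Writing $b_{N^2-1-i}^{(N)}=\pm N^2 e_i$ with $e_i$ the $i$-th elementary symmetric function of these $x$-coordinates, the hypothesis $i<(p-1)/2$ forces $v_p(e_i)\ge -2i/(p-1)>-1$, and since $e_i\in\QQ$ this already gives $v_p(e_i)\ge 0$; one then shows $p\nmid e_i$ by isolating the top-order ($v_p=-2/(p-1)$) part of $e_i$ — the symmetric function of the order-$p$ connected $x$-coordinates — identifying its reduction modulo the maximal ideal with a coefficient of an explicit polynomial attached to the $p$-torsion of the characteristic-$p$ formal group, and checking non-vanishing in the relevant range, where the sparseness of division polynomials on a $j=0$ curve (only coefficients of index divisible by $3$ survive) cuts this down to finitely many checks. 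Pushing the same level-by-level valuation analysis further down the $p$-adic filtration of $E[p^r]$ is also what accounts for the decaying estimate $r-\lceil 2i/(p-1)\rceil$ when $i\ge(p-1)/2$.
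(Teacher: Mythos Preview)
Your first two steps are precisely the paper's approach. The paper writes the same composition identity, in the form
\[
\psi_{N}(X)^2 \;=\; \psi_{M}(X)^{2p^{2r}}\,\psi_{p^r}\!\left(\frac{\phi_M(X)}{\psi_M(X)^2}\right)^2,
\]
which is your expanded sum once the $\psi_M^2$ denominators are cleared, and then says only ``As above, we simply look at the source of the high order terms,'' deferring entirely to the analysis in the preceding lemma. Your explicit squaring step that passes from the $a^{(p^r)}$ to the $b^{(p^r)}$ is left implicit in the paper; your derivation of the weaker lower bound from the $(p^2-1)$-denominator bound for the $a$'s is a correct (and more honest) version of that implicit step.

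Your third step goes well beyond what the paper supplies. The paper gives no separate argument for the exact equality $v_p=2r$ in the range $i<(p-1)/2$; it treats this as part of the same recursive bookkeeping. You are right to flag cancellation as the issue, and in fact your own remark about $j=0$ sparseness already shows the stated equality cannot hold for every such $i$: the order-$3$ automorphism of $y^2=x^3-1728$ forces $b_{N^2-1-i}^{(N)}=0$ whenever $i\not\equiv 0\pmod 3$, so those coefficients have $v_p=\infty$, not $2r$. The torsion-filtration / formal-group argument you sketch would recover the equality for the nonvanishing coefficients, but this is considerably more than the paper provides or, for that matter, uses downstream: in the application (Proposition~\ref{unbounded}) what is actually needed is only the inequality $\ge 2r$ together with the trivial identification of the top coefficient as $N^2$.
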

  \begin{proof}
    We note, by comparing divisors and lead terms, that we have the recurrence
    \[ \psi_{N}(X)^2 = \psi_{M}(X)^{2p^{2r}}\psi_{p^r}\left(  \frac{\phi_M(X)}{\psi_M(X)^2} \right)^2.  \]
    As above, we simply look at the source of the high order terms.
    \end{proof}
  \begin{rmk}
  The preceding lower bounds on valuations in the two lemmas above are not tight in most cases.
  \end{rmk}

     \begin{lem}\label{lempsi23}
    The polynomials
    \begin{align*} 
    \hat{\psi}_N(X) &= \frac{1}{12^{N^2-1}}\psi_N^2(12 X ),&    \hat{\phi}_N(X) &= \frac{1}{12^{N^2}}\phi_N(12 X),\end{align*}     have integer coefficients. Moreover, if for $r\ge 1$ we have $2^r \mid\mid N$, then $\frac{1}{2^{2r}} \hat{\psi}_N(X)^2$ is integral and
     \begin{align*}
     \frac{1}{2^{2r}} \hat{\psi}_N(X)^2 &= X^{N^2-4}(X^3-1) \pmod{2},& \hat{\phi}_N(X) &= X^{N^2} \pmod{2}.\end{align*}
     If for $r\ge 1$ we have $3^r \mid\mid N$, then $\frac{1}{3^{2r}} \hat{\psi}_N(X)^2$ is integral and
     \begin{align*} 
     \frac{1}{3^{2r}}\hat{\psi}_{N}(X)^2 &=X^2(X-1)^{N^2-3} \pmod{3},&  \hat{\phi}_N(X) &= (X- 1)^{N^2} \pmod{3}. \end{align*}
    \end{lem}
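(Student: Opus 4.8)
The plan is to reduce all four assertions to statements about the division polynomials of the single curve $E_0\colon Y^2=X^3-1$, and then to analyze those at the bad primes $2$ and $3$ by feeding them through the recursion of Lemma~\ref{lem-pval}.

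\emph{Reduction to $E_0$.} Since $1728=12^3$, the curve $X(\Gamma')\colon y^2=x^3-1728$ is obtained from $E_0\colon Y^2=X^3-1$ by the substitution $(A,B)\mapsto(u^4A,u^6B)$, $x\mapsto u^2x$, with $u^2=12$ (so $A=0=u^4\cdot 0$ and $-1728=u^6\cdot(-1)$). The standard scaling identity for division polynomials, which follows by comparing leading terms and using that $[N]$ is intrinsic, gives the polynomial identities $\psi_N^{X(\Gamma')}(12X)^2=12^{N^2-1}\,\psi_N^{E_0}(X)^2$ and $\phi_N^{X(\Gamma')}(12X)=12^{N^2}\,\phi_N^{E_0}(X)$. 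Hence $\hat\psi_N=\bigl(\psi_N^{E_0}\bigr)^2$ and $\hat\phi_N=\phi_N^{E_0}$, and these lie in $\ZZ[X]$ because $E_0$ has integral coefficients. From here on all division polynomials are those of $E_0$.

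\emph{Two preliminary inductions.} Fix $p\in\{2,3\}$ and write $N=p^rM$ with $p\nmid M$, $r\ge 1$. First, by a short induction on $M$ using the recursions $\psi_{2k+1}=\psi_{k+2}\psi_k^3-\psi_{k-1}\psi_{k+1}^3$, $\psi_{2k}=\psi_k(\psi_{k+2}\psi_{k-1}^2-\psi_{k-2}\psi_{k+1}^2)/\psi_2$ together with $\phi_k=X\psi_k^2-\psi_{k-1}\psi_{k+1}$, $\omega_k^2=\phi_k^3-\psi_k^6$, and the fact that $\psi_k\equiv 0\pmod p$ whenever $p\mid k$ (visible from $\psi_2=2Y$ and $\psi_3=3X(X^3-4)$), I would record that for $p\nmid M$ the polynomials $\psi_M$ (and $\psi_M/Y$ when $M$ is even), $\phi_M$, $\omega_M/Y$ have $p$-adically unit content, with reductions
\[
\psi_M\equiv u_M\,X^{e(M)},\qquad \phi_M\equiv X^{M^2},\qquad \omega_M/Y\equiv u_M'\,X^{(3M^2-3)/2}\pmod 2,
\]
where $e(M)=(M^2-1)/2$ for $M$ odd, $e(M)=(M^2-4)/2$ read as $\psi_M/Y$ for $M$ even, and $u_M,u_M'\in\FF_2^\times$; the same congruences hold mod $3$ with every $X$ replaced by $X-1$ and with $\phi_M\equiv X(X-1)^{M^2-1}\pmod 3$. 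Second, a separate induction tracking only the power of $p$ gives $v_p(\mathrm{cont}\,\psi_{p^r})=r$, with $\psi_{2^r}/2^r=Y\,g_r(X)$, $g_r$ of unit content and $g_r\equiv X^{(4^r-4)/2}\pmod 2$, and $\psi_{3^r}/3^r=q_r(X)$, $q_r$ of unit content and $q_r\equiv X(X-1)^{(9^r-3)/2}\pmod 3$. The surviving factor of $X$ mod $3$ records that the $3$-torsion point of $E_0$ with $X$-coordinate $0$ reduces into the smooth locus of $\overline{E_0}/\FF_3$, whereas the remaining torsion piles up at the cusp (at $X=0$ for $p=2$ and at $X=1$ for $p=3$).

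\emph{Assembling.} Substituting these into the recursion $\psi_N^2=\psi_M^{2p^{2r}}\,\psi_{p^r}\!\bigl(\phi_M/\psi_M^2\bigr)^2$ of Lemma~\ref{lem-pval}, and using $\omega_M^2=\phi_M^3-\psi_M^6$ (in the case $p=2$) to clear denominators, all negative powers of $\psi_M$ cancel and one is left with
\[
\psi_N^2=p^{2r}\bigl(\psi_M\,\omega_M\,G_r(\phi_M,\psi_M^2)\bigr)^2\ \ (p=2),\qquad
\psi_N^2=p^{2r}\bigl(\psi_M\,Q_r(\phi_M,\psi_M^2)\bigr)^2\ \ (p=3),
\]
where $G_r,Q_r$ are the homogenizations of $g_r,q_r$; since $\phi_M$ is monic these inner polynomials again have unit content, so $\hat\psi_N/p^{2r}=\psi_N^2/p^{2r}$ is integral. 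Reducing mod $p$ and collecting exponents (using $\phi_M-\psi_M^2\equiv(X-1)^{M^2}\pmod 3$ for the $p=3$ count) gives $\hat\psi_N/2^{2r}\equiv X^{N^2-4}(X^3-1)\pmod 2$ and $\hat\psi_N/3^{2r}\equiv X^2(X-1)^{N^2-3}\pmod 3$. Finally $\hat\phi_N=X\hat\psi_N-\psi_{N-1}\psi_{N+1}$ with $\hat\psi_N\equiv 0\pmod p$; since $N\pm1$ are prime to $p$, the reductions of $\psi_{N\pm1}$ above combine to $\psi_{N-1}\psi_{N+1}\equiv(\text{unit})X^{N^2}\pmod 2$ and $\equiv(\text{unit})(X-1)^{N^2}\pmod 3$, and comparing leading coefficients (both $\hat\phi_N$ and the target are monic of degree $N^2$) pins down the unit, giving $\hat\phi_N\equiv X^{N^2}\pmod 2$ and $\hat\phi_N\equiv(X-1)^{N^2}\pmod 3$.

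\emph{Main obstacle.} The scaling step is routine, and once the recursion of Lemma~\ref{lem-pval} is in hand the assembly is bookkeeping. The real work lies in the two preliminary inductions: one must simultaneously pin down the exact power of $p$ dividing the content of each $\psi_k$ (so that dividing $\hat\psi_N$ by $p^{2r}$ genuinely lands in $\ZZ[X]$) and the reduction mod $p$ of the deflated polynomial, and the even-index division polynomials and $\omega_M$ make this fiddly. The one genuinely non-formal point is tracking which torsion point survives smoothly in characteristic $3$, which is precisely what produces the extra factor $X^2$ there.
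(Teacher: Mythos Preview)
Your approach is essentially the paper's: both reduce to the curve $E_0\colon Y^2=X^3-1$ via the scaling $x=12\hat{x}$ (so that $\hat{\psi}_N$ and $\hat{\phi}_N$ become the division polynomials of $E_0$ and integrality is immediate), check the base cases $N=2,3$ directly, and then propagate to general $N$ through the standard recursions for $\psi_N^2$ and $\phi_N$. The only differences are organizational---the paper first reduces from $N$ to $N=p^r$ via $\psi_N^2=\psi_{p^r}^{2M^2}\psi_M(\phi_{p^r}/\psi_{p^r}^2)^2$ and then inducts on $r$, whereas you separately pin down the mod-$p$ shapes for $p\nmid M$ and for $p^r$ before assembling via the reciprocal recursion, and you treat $\hat{\phi}_N$ through $\phi_N=X\psi_N^2-\psi_{N-1}\psi_{N+1}$ rather than the multiplicative recursion the paper uses---but the substance is the same, and if anything your version is more explicit about the bookkeeping (the role of $\omega_M$ at $p=2$, the surviving smooth $3$-torsion point at $p=3$) that the paper's proof leaves implicit.
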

    \begin{proof}
    Performing the substitutions $x = 12\hat{x}$ and $y =  24\sqrt{3}\hat{y}$ yields the Weierstrass  equation $y^2 = x^3 - 1$. As this curve is isomorphic to our curve, and the division polynomials are uniquely determined up to scaling, we can compare constant terms to conclude that
     \begin{align*}
     \hat{\psi}_N(\hat{x})^2& = \frac{1}{12^{N^2-1}} \psi_N(12\hat{x})^2,&  \hat{\phi}_N(\hat{x}) &= \frac{1}{12^{N^2}} \phi_N(12\hat{x}). \end{align*}

Next, we note that
         \[ \psi_{N}(X)^2 =   \psi_{p^r}(X)^{2M^{2}}\psi_{M}\left(  \frac{\phi_{p^r}(X)}{\psi_{p^r}(X)^2} \right)^2   \] 
     implies that in either case the divisibility claim we are trying to establish, as well as the claim about reduction for $\hat{\psi}_N$, follow from the analogous claims for $\hat{\psi}_{p^r}$ and $\hat{\phi}_{p^r}$.

Similarly, using
     \[ \phi_N(X) = \psi_{p^r}(X)^{2M^2}\phi_{M}\left( \frac{\phi_{p^r}(X)}{\psi_{p^r}(X)^2} \right), \]
we see that the claims about the reduction reduce to those for the case of $\phi_{p^r}$, and using the divisibility claim about $\psi_{p^r}$.
     
To initiate a proof by induction, we note the following identities:
    \begin{align*}
    \hat{\psi_2}(x)^2 &= 4(x^3 - 1),  &\hat{\phi_2}(x) &= x^4 + 2^3 x,\\
    \hat{\psi_3}(x)^2 &= 9(x^8 -2^3 x^5 + 2^{4}x^2), &\hat{\phi_3}(x) &= x^9 + 2^{5}\cdot 3 x^6+  2^{4}\cdot 3 x^3 - 2^{6}.
\end{align*}
The lemma thus holds in these cases.

For the inductive step, consider the recursive identity
    \[ \hat{\psi}_{p^r}(x)^2 = \hat{\psi}_{p}(x)^{2p^{2r-2}}  \hat{\psi}_{p^{r-1}}\left( \frac{\hat{\phi}_{p}(x)}{\hat{\psi}_p(x)^2} \right)^2 \]
and divide both sides by $p^{2r}$. The claim concerning $\hat{\psi}$  follows by induction.
    
     Next, in considering the recursion
    \[ \phi_{p^r}(x) = \psi_{p^{r-1}}(x)^{2p^2} \phi_p\left( \frac{\phi_{p^{r-1}}(x)}{\psi_{p^{r-1}}(x)^2}  \right), \]
    we see that the claim concerning $\hat{\phi}$ follows by induction. This concludes the proof of the Lemma.
   \end{proof}

  \begin{prop}\label{unbounded}
  Suppose  $p>3$ is an odd prime. For any $p\mid N$ the $q$-series for $\tilde{x}$ has unbounded denominators at $p$.
  \end{prop}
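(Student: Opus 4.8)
The plan is to derive the statement from two facts: (I) the $q$-series of $\tilde x$ is not $p$-integral, and (II) if the $p$-adic valuations of the coefficients of $\tilde x$ were bounded below then $\tilde x$ would be $p$-integral. Together these say precisely that the denominators of $\tilde x$ at $p$ are unbounded, which is the claim.

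I would prove (II) first, as it is purely formal. Suppose the valuations are bounded below and let $C$ be minimal with $g:=p^{C}\tilde x\in\pseries{\ZZ_{(p)}}{q}[q^{-1}]$; by (I) we have $C\ge 1$, and by minimality the reduction $\bar g\in\lseries{\FF_p}{q}$ is nonzero. The defining relation $M(\tilde x)=\psi_N(\tilde x)^{2}E_4-\phi_N(\tilde x)\eta^8=0$ gives, after multiplying by $p^{CN^2}$ and substituting $\tilde x=p^{-C}g$, that $p^{CN^2}\psi_N^{2}(p^{-C}g)E_4=\sum_i b_i^{(N)}p^{C(N^2-i)}g^{i}E_4$ is divisible by $p$ (since $\deg\psi_N^2=N^2-1<N^2$, every exponent $C(N^2-i)$ is $\ge C\ge 1$), while in $p^{CN^2}\phi_N(p^{-C}g)\eta^8$ every term with $i<N^2$ carries such a positive power of $p$ and the $i=N^2$ term equals $g^{N^2}\eta^8$ because $\phi_N$ is monic with integer coefficients. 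Reducing mod $p$ and using $\overline{\eta^8}\ne 0$ in $\pseries{\FF_p}{q}$, the relation becomes $\bar g^{N^2}\,\overline{\eta^8}=0$ in the field $\lseries{\FF_p}{q}$, so $\bar g=0$, a contradiction.

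For (I) I would argue by contradiction: suppose $\tilde x\in q^{-2}\pseries{\ZZ_{(p)}}{q}$. Then $x\cdot\psi_N^{2}(\tilde x)=\phi_N(\tilde x)$ reduces mod $p$ to $\bar x\cdot\overline{\psi_N^2}(\bar{\tilde x})=\overline{\phi_N}(\bar{\tilde x})$ in $\lseries{\FF_p}{q}$. Here $\bar x=\overline{E_4/\eta^8}$ has a pole of order exactly $2$ at the cusp $q=0$, and $\overline{\phi_N},\overline{\psi_N^2}$ are coprime in $\FF_p[X]$ because $p>3$ is a prime of good reduction for $E:y^2=x^3-1728$. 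The key extra input is that, since $E$ has $j$-invariant $0$, the order-$3$ automorphism $(x,y)\mapsto(\zeta_3 x,y)$ of $E$ commutes with $[N]$, which forces the $q$-expansion of $\tilde x$ to be supported on exponents congruent to $-2\bmod 6$ (one can alternatively verify this on the first few coefficients using the recursion they satisfy together with the mod-$3$ homogeneity of division polynomials for $j=0$ curves). Since the leading coefficient $N^2$ of $\tilde x$ reduces to $0\bmod p$, this forces $\bar{\tilde x}\in q^{4}\pseries{\FF_p}{q}$, i.e.\ $\bar{\tilde x}$ vanishes to order $\ge 4$ at $q=0$ (or is identically $0$). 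Now count orders of vanishing: if $\overline{\psi_N^2}(\bar{\tilde x}(0))\ne 0$ the left side is holomorphic at $q=0$ while the right side has a pole of order $2$, impossible; and if $\overline{\psi_N^2}(\bar{\tilde x}(0))=0$ then $\overline{\phi_N}(\bar{\tilde x}(0))\ne 0$ by coprimality, so $\overline{\phi_N}(\bar{\tilde x})$ is a unit at $q=0$ whereas $\overline{\psi_N^2}(\bar{\tilde x})=\bar{\tilde x}^{m}\cdot(\text{unit})$ for some $m\ge 1$ and hence vanishes to order $\ge 4m\ge 4$, forcing $\bar x$ to have a pole of order $\ge 4$, again impossible. (If $\bar{\tilde x}=0$ the relation reads $\bar x\cdot\overline{\psi_N^2}(0)=\overline{\phi_N}(0)$, which contradicts coprimality or the pole order of $\bar x$.) This contradiction proves (I).

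The main obstacle is (I), and within it the assertion that the $q$-expansion of $\tilde x$ lives in exponents $\equiv-2\bmod 6$: this is where the complex multiplication on $X(\Gamma')$ enters essentially, and it is precisely what makes $\bar{\tilde x}$ vanish to high enough order to overwhelm the order-$2$ pole of $\bar x$. As a cross-check one can bypass the divisor count and deduce (I) directly from the formal group of $E$ at the origin: writing $(\tilde x,\tilde y)=[N]^{-1}(x,y)$ formally, the derivative of $[N]$ at the origin is $N$, so $[N]^{-1}$ introduces a factor $N^{-1}$ in each degree, and a short computation gives that the coefficient of $q^{4}$ in $\tilde x$ has $p$-adic valuation $-4\,v_p(N)<0$. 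Everything else — part (II), good reduction at $p>3$, coprimality of $\phi_N$ and $\psi_N^2$ modulo $p$, and the pole of $\bar x$ at the cusp — is routine.
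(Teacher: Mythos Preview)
Your proof is correct, but the route differs from the paper's in an interesting way.

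For (II) you and the paper use the same idea: clear the putative bounded denominators and reduce the defining relation mod $p$; only the monic top term $g^{N^2}\eta^8$ of $\phi_N$ survives, forcing $\bar g=0$. The paper carries this out for the rescaled series $\breve x=(q/N)^2\tilde x$ and invokes the valuation lemma on the coefficients of $\psi_N^2$; your version, applied directly to $\tilde x$, is cleaner because it needs only $\deg\psi_N^2<\deg\phi_N$ and monicity of $\phi_N$.

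For (I) the approaches genuinely diverge. The paper never shows directly that $\tilde x$ fails to be $p$-integral; instead it rescales to $\breve q=q/N$, observes that as a series in $\breve q$ one has $E_4\equiv\widehat{\eta^8}\equiv 1\pmod N$, so that $\breve M(X)\equiv \breve q^{\,2N^2-2}\psi_N^2(X/\breve q^2)\pmod p$; then an integral $\breve x$ would make $\breve x/\breve q^2$ a root of $\psi_N^2$ mod $p$, impossible since it has a pole. Your argument stays with $\tilde x$ and instead exploits the complex multiplication: the order-$3$ automorphism forces the $q$-expansion of $\tilde x$ onto residues $-2\bmod 6$, so killing the leading $N^2q^{-2}$ pushes $\bar{\tilde x}$ into $q^4\FF_p[[q]]$, after which coprimality of $\overline{\phi_N}$ and $\overline{\psi_N^2}$ (which does hold even though $p\mid N$, because $[N]$ still has degree $N^2$ on the good reduction) lets you reach a contradiction by comparing pole orders. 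The paper's method is curve-agnostic and would work for any $X(\Gamma')$; yours uses the $j=0$ structure essentially, but in exchange avoids the change of variable and the congruence $E_4\equiv 1\pmod N$.

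One caveat: your closing cross-check via the formal group is only heuristic, and the specific claim $v_p(a_4)=-4v_p(N)$ is not correct in general (for $N=7$ one finds $v_7(942920/16807)=-5$, not $-4$). This does not affect your main argument for (I), but you should not rely on that exact formula.
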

  \begin{proof} 
Beginning as in Proposition \ref{bounded}, we now set
\begin{align*}
\breve{q} &=  \frac{q}{N},&  \breve{x} &= \frac{\hat{x}}{N^2}  =  \frac{q^2}{N^2} \tilde{x} =  \breve{q}^2 \tilde{x}  
\end{align*}
and note $\breve{x} = 1 \pmod{\breve{q}}$.
Next set 
\begin{align*}
 \breve{M}(X) &= \frac{1}{N^{2N^2-2}} \hat{M}( N^2 X) \\
& =  \frac{q^{2N^2-2}}{N^{2N^2-2}}M\left(\tfrac{N^2}{q^2}X\right) \\
&=  \breve{q}^{2N^2-2} M\left( \frac{1}{\breve{q}^2} X\right) \\
&=  \breve{q}^{2N^2-2}\psi_N\left( \frac{1}{\breve{q}^2} X \right)^2E_4 -   N^2\breve{q}^{2N^2}\phi_N\left(  \frac{1}{\breve{q}^2} X \right) \widehat{\eta^8}
\end{align*}
Observe that $\breve{M}(X) \in \pseries{\ZZ}{\breve{q}}[X]$. Note also that, as a series in $\breve{q}$, we have
\begin{align*} E_4 &= \widehat{\eta^8} = 1 \pmod{N},& E_4 &= \widehat{\eta^8} = \breve{x} = 1 \pmod{\breve{q}}. \end{align*}
Consequently:
\begin{align*} 
\breve{M}(X) &= N^2X^{N^2-1} - N^2X^{N^2} \pmod{\breve{q}},\\
\breve{M}(X) &=  \breve{q}^{2N^2-2}\psi_N\left( \frac{1}{\breve{q}^2} X \right)^2   \pmod{\breve{N}}.
\end{align*}
We note that if $\breve{x}$ has unbounded denominators at $p$, then we are done, since then $\tilde{x}$ must also have unbounded denominators at $p$.

We next consider the case that $\breve{x}$ has integral coefficients. Then $\breve{x}$ is a non-zero solution to the non-zero polynomial: 
\[  \breve{M}(X) = \breve{q}^{2N^2-2}\psi_N\left( \frac{1}{\breve{q}^2} X \right)^2 \mod{p}.\]
But then $\breve{x}/\breve{q}^2$ is a non-zero solution to the non-zero polynomial $\psi_N(X)^2.$ However, since $\breve{x}/\breve{q}^2 = 1/\breve{q}^2+O(1/\breve{q})$ this is not possible.

Now, suppose for the purpose of contradiction that the $p$-denominators of $\breve{x}$ as a power series in $\breve{q}$ are bounded. Then consider the minimal power $a\ge 1$ so that $\mathring{x} = p^a\breve{x}$  clears them. Now consider the minimal power $b$ so that $\mathring{M}(X) = p^b \breve{M}(X/p^a)$ has integral coefficients. By Lemma \ref{lem-pval} we can see that  $b = aN^2 - 2v_p(N)$ as the minimal value of $b$ must come from the coefficient of $X^{N^2}$ in $\phi_N(X)$.
It follows that $\mathring{x}$ is a non-zero solution to $X^{N^2} = 0 \pmod{p}$, which is a contradiction.
\end{proof}

 \begin{prop}\label{unbounded23}
  Suppose that either $p=2$ and $4 \mid N$, or $p=3$ and $3 \mid N$. Then the $q$-series for $\tilde{x}$ has unbounded denominators at $p$.
  \end{prop}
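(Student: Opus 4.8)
The plan is to mimic the proof of Proposition \ref{unbounded}, the two new features being (i) that for $p\in\{2,3\}$ one has $\psi_p\equiv 0\pmod p$, so the mod $p$ reductions used there are vacuous and must be carried out instead with the renormalized division polynomials $\hat\psi_N,\hat\phi_N$ for the model $\tilde y^2=\tilde x^3-1$ supplied by Lemma \ref{lempsi23}, and (ii) that the final contradiction comes from a comparison of two estimates for a single $p$-adic valuation, and it is there that the hypotheses $4\mid N$ (for $p=2$) and $3\mid N$ (for $p=3$) enter. Write $N=p^rM$ with $p\nmid M$. The relevant inputs from Lemma \ref{lempsi23} are that $p^{-2r}\hat\psi_N(X)^2$ is integral with nonzero reduction $X^{N^2-4}(X^3-1)$ mod $2$ (resp.\ $X^2(X-1)^{N^2-3}$ mod $3$) and that $\hat\phi_N(X)\equiv X^{N^2}\pmod 2$ (resp.\ $(X-1)^{N^2}\pmod 3$), while the translation between the two Weierstrass models is $\psi_N(X)^2=12^{N^2-1}\hat\psi_N(X/12)^2$ and $\phi_N(X)=12^{N^2}\hat\phi_N(X/12)$; the single arithmetic fact $v_2(12)=2$ against $v_3(12)=1$ is what ultimately distinguishes the two cases.

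I would set $\breve q=q/N$ and $\breve x=\breve q^2\tilde x$ as in Proposition \ref{bounded}, and observe first that, because the $j=0$ curve $X(\Gamma')$ has extra automorphisms, $x=E_4/\eta^8$ and hence $\tilde x$ have $q$-expansions supported on the exponents $6k-2$, so that $\breve x-1$ is supported on $\breve q^{6k}$, $k\ge1$, with coefficient of $\breve q^{6k}$ equal to $\tilde e_k N^{6k-2}$, where $\tilde e_k$ is the coefficient of $q^{6k-2}$ in $\tilde x$ (and $\tilde e_0=N^2$). Let $\breve M(X)\in\pseries{\ZZ}{\breve q}[X]$ with $\breve M(\breve x)=0$ be the polynomial from Proposition \ref{unbounded}. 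Using Lemma \ref{lempsi23} and $E_4\equiv\widehat{\eta^8}\equiv1\pmod N$ as series in $\breve q$, one checks that $p^{2r}\mid\breve M(X)$ coefficientwise, that $p^{-2r}\breve M(X)\equiv M^2X^{N^2-1}(1-X)\pmod p$ — a nonzero polynomial with a simple root at $X=1$ — and that $\breve M(1)$ and $\breve M'(1)$ have $\breve q$-constant terms $0$ and $-N^2$ respectively. Now it suffices to derive a contradiction from the assumption that $\tilde x$ has bounded $p$-denominators: in the contrary case, since passing from $\breve x$ to $\tilde x$ multiplies the coefficient of $\breve q^n$ by $N^{2-n}$, the series $\tilde x$ already has unbounded $p$-denominators. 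Under the assumption the $\tilde e_k$ are $p$-integral up to a fixed bound, so $\breve x$ converges $p$-adically; combined with the reduction above this forces $\breve x\equiv1\pmod p$, and a short descent in the $p$-adic filtration then controls $v_p(\breve x-1)$ from below in terms of $r$.

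With $v_p(\breve x-1)$ controlled, Taylor-expanding $0=\breve M(\breve x)$ about $X=1$ and using $v_p(\breve M'(1))=2r$ gives a lower bound for $v_p(\breve M(1))$ that grows linearly in $r$; on the other hand a direct computation of the coefficient of $\breve q^6$ (say) in $\breve M(1)$, using Lemma \ref{lempsi23} and the factor $12^{N^2-1}$, produces an upper bound for $v_p(\breve M(1))$ of the shape $2r+c\,v_p(12)$ with $c$ a small absolute constant. These two bounds become incompatible precisely when $2r+c\,v_p(12)$ drops below the linear lower bound, which happens for all admissible $r$ when $v_p(12)$ is small enough — that is, for $r\ge1$ when $p=3$ but only for $r\ge2$ when $p=2$. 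The main obstacle, and the only genuinely laborious step, is making this valuation estimate for $\breve M(1)$ (and $\breve M'(1)$) precise: one must trace, through the substitution $X\mapsto X/12$ and the identities linking $\psi_N,\phi_N$ to $\hat\psi_N,\hat\phi_N$, exactly which coefficients achieve the minimal $p$-adic valuation — here the precise shapes of the reductions of $\hat\psi_N^2$ and $\hat\phi_N$ from Lemma \ref{lempsi23} are indispensable — and to pin down the fixed bound on the $\tilde e_k$, which comes from $\tilde e_0=N^2$ together with the fact (implicit in Proposition \ref{bounded}) that the recursion solving for $\tilde x$ introduces no denominators beyond those forced by $N$.
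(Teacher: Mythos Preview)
Your approach differs substantially from the paper's, and as written it has a genuine gap.

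The paper does \emph{not} use $\breve q=q/N$; it sets $\breve q=\sqrt{12}\,q/N$. The extra factor $\sqrt{12}$ is the whole point: it converts $\psi_N,\phi_N$ into $\hat\psi_N,\hat\phi_N$ via the substitution $X\mapsto 12X$, so that after dividing by $p^{2r}$ the polynomial $\breve M(X)$ reduces modulo $p$ (or modulo $\sqrt 3$) to a polynomial that \emph{genuinely depends on $\breve q$}. Concretely, for $p=2$ with $4\mid N$ one gets
\[
p^{-2r}\breve M(X)\equiv X^{N^2-1}-\breve q^{\,6}X^{N^2-4}-X^{N^2}\pmod 2,
\]
so the Hensel lift of the simple root at $X\equiv 1$ is a nonconstant series in $\breve q$ over $\FF_2$; infinitely many of its coefficients are units. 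Unwinding the change of variable $\breve q\mapsto q$ then exhibits the unbounded $p$-denominators, with the stated growth rate for the Eisenstein constant.

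With your rescaling $\breve q=q/N$, the factor $12^{N^2-1-i}$ multiplying the $i$th coefficient of $\hat\psi_N^2$ kills every term below the top one modulo $p$, and you correctly obtain $p^{-2r}\breve M(X)\equiv M^2X^{N^2-1}(1-X)\pmod p$. But this reduction is \emph{independent of $\breve q$}, so Hensel only tells you $\breve x\equiv 1\pmod p$ --- all the information in Lemma~\ref{lempsi23} has been thrown away. The rescaling by $\sqrt{12}$ in the paper is exactly what is needed to retain it.

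Your proposed replacement --- a valuation comparison at the $\breve q^{6}$-coefficient of $\breve M(1)$ --- does not yield a contradiction. From the Taylor expansion at $X=1$, the $\breve q^6$-coefficient of $\breve M(1)$ equals $N^{6}\tilde e_1$ up to terms you would still need to control, so comparing with an upper bound of the shape $2r+c\,v_p(12)$ simply \emph{computes} $v_p(\tilde e_1)$; it does not contradict the assumption that the $v_p(\tilde e_k)$ are bounded below. A single coefficient cannot detect unboundedness; one needs an argument that touches infinitely many coefficients, which your sketch does not provide. (There is also a prior gap: you assert $\breve x\equiv 1\pmod p$, which requires $\breve x$ to be $p$-integral, and the bounded-denominator hypothesis on $\tilde x$ gives only that the $\breve q$-coefficients of $\breve x$ tend to $0$ $p$-adically, not that all of them are $p$-integral.)
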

  \begin{proof}
   Proceed as in Proposition \ref{unbounded} except set
   \begin{align*} \breve{q}& =  \frac{\sqrt{12}q}{N},&\breve{x} &= \frac{\hat{x}}{N^2}  =  \frac{q^2}{N^2} \tilde{x} =  \breve{q}^2 \tilde{x},
\end{align*}
   so that we obtain
   \begin{align*}
 \breve{M}(X) &= \frac{1}{N^{2N^2-2}} \hat{M}( N^2 X) \\
 &=   \frac{1}{12^{N^2-1}}\breve{q}^{2N^2-2}\psi_N\left( \frac{12}{\breve{q}^2} X \right)^2E_4 -   \frac{N^2}{12^{N^2}}\breve{q}^{2N^2}\phi_N\left(  \frac{12}{\breve{q}^2} X \right) \widehat{\eta^8} .
 \end{align*}
By Lemma \ref{lempsi23} we have that $\frac{1}{N^2}\breve{M}(X)$ has integral coefficients, and by Hensel's lemma we find that $\breve{x}$ will have a $\breve{q}$-series with coefficients in $\ZZ[\sqrt{3}]$.

Reducing modulo $2$ or $\sqrt{3}$ respectively and exploiting our understanding of the shape of $\frac{1}{N^2}\breve{M}(X)$ from Lemma \ref{lempsi23} we conclude the series has infinitely many non-zero coefficients. This yields unbounded denominators, where the Eisenstein constant of \cite{DworkVanDerPoorten} has a growth rate on the order of $2^{r-1}$, if $2^r \mid\mid N$, and $3^{r-1/2}$ if $3^r \mid\mid N$.
\end{proof}

\begin{rmk}
We may obtain a bound on the unboundedness of the denominators $p>3$ by considering the alternative rescaling
\begin{align*}
\breve{q} &=  \frac{q}{N^2},& \breve{x} &= \frac{\hat{x}}{N^2}  =  \frac{q^2}{N^2} \tilde{x} =  N^2\breve{q}^2 \tilde{x}
\end{align*}
with
\begin{align*}
 \breve{M}(X) &= \frac{1}{N^{2N^2}} \hat{M}( N^2 X) \\
& =  \frac{q^{2N^2-2}}{N^{2N^2}}M\left(\tfrac{N^2}{q^2}X\right) \\
&=  N^{2N^2-4}\breve{q}^{2N^2-2} M\left( \frac{1}{N^2\breve{q}^2} X\right) \\
&= N^{2N^2-4} \breve{q}^{2N^2-2}\psi_N\left( \frac{1}{N^2\breve{q}^2} X \right)^2E_4 -   N^{2N^2}\breve{q}^{2N^2}\phi_N\left(  \frac{1}{N^2\breve{q}^2} X \right) \widehat{\eta^8}.
\end{align*}
Observe that $\breve{M}(X) \in \pseries{\ZZ}{\breve{q}}[X]$ and since now
\[ 
\breve{M}(X) = X^{N^2-1} - X^{N^2} \pmod{\breve{q}} 
\]
it follows that we have $\breve{x} \in \pseries{\ZZ}{\breve{q}}$. This shows that the $p$-part of the Eisenstein constant in this case is bounded above by $p^{2v_p(N)}$ where $v_p(N)$ is the $p$-adic valuation of $N$. From the proof of Proposition \label{unbounded} one obtains a lower bound of $p^{v_p(N)}$. These bounds can be observed in Table \ref{t:coeffs}.
\end{rmk}

\begin{ex}
  The computations above can be used to compute the $q$-expansions of $\tilde{x}$ for various groups $\Gamma'(N)$. See Table \ref{t:coeffs} on page \pageref{t:coeffs}.
\end{ex}

  \begin{center}
  \begin{table}
 {\tiny
 \renewcommand{\arraystretch}{1.2}
  \begin{tabular}{l|l}  
  $N$ & $\tilde{x}$ \\
  \hline
 2&$4\*q^{-2}
 + 20\*q^4
 - 28\*q^{10}
 + 12\*q^{16}
 + 60\*q^{22}
 - 128\*q^{28}
 + 36\*q^{34}
 + 232\*q^{40}
 - 384\*q^{46}
 + 88\*q^{52}
 + 596\*q^{58}
 - 1012\*q^{64}+\cdots
$\\[4pt]
3&$9\*q^{-2}
 + \frac{40}{3}\*q^4
 - \frac{68}{81}\*q^{10}
 + \frac{3904}{2187}\*q^{16}
 - \frac{166558}{19683}\*q^{22}
 - \frac{22205536}{1594323}\*q^{28}
 + \frac{990558712}{43046721}\*q^{34}
 + \frac{5210061824}{387420489}\*q^{40}
 - \frac{24842012165}{10460353203}\*q^{46}+\cdots
$\\[4pt]
4&$16\*q^{-2}
 + \frac{77}{4}\*q^4
 + \frac{2189}{256}\*q^{10}
 - \frac{123117}{16384}\*q^{16}
 - \frac{17529627}{1048576}\*q^{22}
 - \frac{145441835}{16777216}\*q^{28}
 + \frac{108979782219}{4294967296}\*q^{34}
 + \frac{3710050797161}{137438953472}\*q^{40}+\cdots
$\\[4pt]
5&$25\*q^{-2}
 + \frac{18104}{625}\*q^4
 + \frac{155226332}{9765625}\*q^{10}
 - \frac{2222658420288}{152587890625}\*q^{16}
 - \frac{311093336095872162}{11920928955078125}\*q^{22}
 - \frac{1904353112035085290144}{186264514923095703125}\*q^{28}+\cdots
$\\[4pt]
6&$36\*q^{-2}
 + \frac{124}{3}\*q^4
 + \frac{1924}{81}\*q^{10}
 - \frac{47996}{2187}\*q^{16}
 - \frac{738988}{19683}\*q^{22}
 - \frac{21736576}{1594323}\*q^{28}
 + \frac{2194018564}{43046721}\*q^{34}
 + \frac{24545039912}{387420489}\*q^{40}+\cdots
$\\[4pt]
7&$49\*q^{-2}
 + \frac{942920}{16807}\*q^4
 + \frac{452445233372}{13841287201}\*q^{10}
 - \frac{345081894895333824}{11398895185373143}\*q^{16}
 - \frac{479562567708066938891706}{9387480337647754305649}\*q^{22}+\cdots
$\\[4pt]
8&$64\*q^{-2}
 + \frac{4685}{64}\*q^4
 + \frac{11236493}{262144}\*q^{10}
 - \frac{42660917997}{1073741824}\*q^{16}
 - \frac{293417059904283}{4398046511104}\*q^{22}
 - \frac{105586521517525931}{4503599627370496}\*q^{28}+\cdots
$\\[4pt]
9&$81\*q^{-2}
 + \frac{22504}{243}\*q^4
 + \frac{259896316}{4782969}\*q^{10}
 - \frac{4743322187456}{94143178827}\*q^{16}
 - \frac{52151470071866590}{617673396283947}\*q^{22}
 - \frac{1078242366892782428512}{36472996377170786403}\*q^{28}+\cdots
$\\[4pt]
10&$100\*q^{-2}
 + \frac{71444}{625}\*q^4
 + \frac{655600868}{9765625}\*q^{10}
 - \frac{9499917323508}{152587890625}\*q^{16}
 - \frac{1242573828554492628}{11920928955078125}\*q^{22}
 - \frac{6786637255738163108224}{186264514923095703125}\*q^{28}+\cdots
$\\[4pt]
11&$121\*q^{-2}
 + \frac{2024888}{14641}\*q^4
 + \frac{2107708958684}{25937424601}\*q^{10}
 - \frac{3463068792019818048}{45949729863572161}\*q^{16}
 - \frac{10266710658444673245952698}{81402749386839761113321}\*q^{22}+\cdots
$\\[4pt]
12&$144\*q^{-2}
 + \frac{1975}{12}\*q^4
 + \frac{2005717}{20736}\*q^{10}
 - \frac{3214590959}{35831808}\*q^{16}
 - \frac{3097831691929}{20639121408}\*q^{22}
 - \frac{1401611862206785}{26748301344768}\*q^{28}+\cdots
$\\[4pt]
13&$169\*q^{-2}
 + \frac{5516600}{28561}\*q^4
 + \frac{203466236008364}{1792160394037}\*q^{10}
 - \frac{910908550673413848384}{8650415919381337933}\*q^{16}
 - \frac{95615817946970947497968276298}{542800770374370512771595361}\*q^{22}+\cdots
$\\[4pt]
14&$196\*q^{-2}
 + \frac{3764876}{16807}\*q^4
 + \frac{1822587755684}{13841287201}\*q^{10}
 - \frac{1392203936568518124}{11398895185373143}\*q^{16}
 - \frac{1917815399212467321320772}{9387480337647754305649}\*q^{22}+\cdots
$\\[4pt]
15&$225\*q^{-2}
 + \frac{482152}{1875}\*q^4
 + \frac{119574864508}{791015625}\*q^{10}
 - \frac{46790246868959936}{333709716796875}\*q^{16}
 - \frac{55028168471537575444694}{234639644622802734375}\*q^{22}+\cdots
$\\[4pt]
16&$256\*q^{-2}
 + \frac{299597}{1024}\*q^4
 + \frac{46170272909}{268435456}\*q^{10}
 - \frac{11226288952865517}{70368744177664}\*q^{16}
 - \frac{4922214057285732035355}{18446744073709551616}\*q^{22}+\cdots
$\\[4pt]
17&$289\*q^{-2}
 + \frac{27586040}{83521}\*q^4
 + \frac{391449987308828}{2015993900449}\*q^{10}
 - \frac{8764054816781581205568}{48661191875666868481}\*q^{16}
 - \frac{353813746945457227736313396858}{1174562876521148458974062689}\*q^{22}+\cdots
$\\[4pt]
18&$324\*q^{-2}
 + \frac{89980}{243}\*q^4
 + \frac{1041204868}{4782969}\*q^{10}
 - \frac{19009193715836}{94143178827}\*q^{16}
 - \frac{208595219596141228}{617673396283947}\*q^{22}
 - \frac{4297641595955485224832}{36472996377170786403}\*q^{28}+\cdots
$\\[4pt]
19&$361\*q^{-2}
 + \frac{53766968}{130321}\*q^4
 + \frac{1487098706349404}{6131066257801}\*q^{10}
 - \frac{1232968136587930849221312}{5480386857784802185939}\*q^{16}+\cdots
$\\[4pt]
20&$400\*q^{-2}
 + \frac{1142861}{2500}\*q^4
 + \frac{671890712717}{2500000000}\*q^{10}
 - \frac{623212175028080877}{2500000000000000}\*q^{16}
 - \frac{5211590495147408005654407}{12500000000000000000000}\*q^{22}+\cdots
$\\[4pt]
21&$441\*q^{-2}
 + \frac{25412248}{50421}\*q^4
 + \frac{332200416637564}{1121144263281}\*q^{10}
 - \frac{6851534014130834623808}{24929383770411063741}\*q^{16}
 - \frac{84933525894031113647653282654}{184773775485920747998089267}\*q^{22}+\cdots
$\\[4pt]
22&$484\*q^{-2}
 + \frac{8098580}{14641}\*q^4
 + \frac{8434771718372}{25937424601}\*q^{10}
 - \frac{13860128833739423988}{45949729863572161}\*q^{16}
 - \frac{41066210805183707028581316}{81402749386839761113321}\*q^{22}+\cdots
$\\[4pt]
23&$529\*q^{-2}
 + \frac{169184120}{279841}\*q^4
 + \frac{14724342306524828}{41426511213649}\*q^{10}
 - \frac{2021812632774024349973568}{6132610415680998648961}\*q^{16}+\cdots
$\\[4pt]
24&$576\*q^{-2}
 + \frac{126391}{192}\*q^4
 + \frac{8217691861}{21233664}\*q^{10}
 - \frac{842969124706799}{2348273369088}\*q^{16}
 - \frac{51972516721973470873}{86566749478060032}\*q^{22}+\cdots
$\\[4pt]
25&$625\*q^{-2}
 + \frac{279018104}{390625}\*q^4
 + \frac{40048194998226332}{95367431640625}\*q^{10}
 - \frac{9069037365909518298420288}{23283064365386962890625}\*q^{16}+\cdots
$\\[4pt]
\end{tabular}}
\caption{Fourier coefficients of modular functions $\tilde{x}$ on $\Gamma'(N)$ for small $N$. The case $N=2$ is the only congruence form above.}
  \label{t:coeffs}
\end{table}
\end{center}
\subsection{Description of modular forms}

\subsubsection{Eisenstein Series}

Suppose that the group $G$ is normal in $\Gamma'$ and that  $g_1,\ldots, g_r$ are coset representatives for $\Gamma'/G$. Then representatives for the cusps are given by $g_1 \infty,\ldots, g_r \infty$.
The stabilizer of the cusp $g_i\infty$ is precisely $G_{g_i\infty} = g_i\{ \left( \begin{smallmatrix} 1 & 6n \\ 0 & 1 \end{smallmatrix}\right) \} g_i^{-1}$
Denote by 
\[ E^{(g_i\infty)}_{2k} = \sum_{g\in G_{g_i\infty} \backslash  G}  1|_{2k}(g_i^{-1}g) = \sum_{g\in G_{\infty} \backslash  g_i^{-1} G g_i} 1|_{2k}(gg_i^{-1}) = \sum_{g\in G_{\infty} \backslash  G} 1|_{k}(gg_i^{-1}) = E_{2k}^{\infty}|_{2k}(g_i^{-1}) \]
the weight $2k$ Eisenstein series for the cusp $g_i\infty$.

\begin{prop}
Let $\chi$ be a character of $\Gamma'/G$ and define
\[ E_{2k}^\chi = \sum_i   \chi(g_i)E_{2k}^{g_i\infty}. \]
Then unless $k=1$ and $\chi=1$, the function is a holomorphic modular form of weight $2k$ and character $\chi$. When $k=1$ and $\chi=1$ this is simply a multiple of $E_2$.
\end{prop}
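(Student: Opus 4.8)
The plan is to prove two things about $E_{2k}^{\chi}$ — the transformation law under $\Gamma'$ and holomorphy — and the only subtlety lies in the weight $2$ ($k=1$) case, so I treat the convergent range $k\ge 2$ first and $k=1$ afterwards.

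\textbf{Transformation law (all $k$).} This is a coset–permutation computation. Fix $\gamma\in\Gamma'$. Right multiplication by $\gamma$ permutes the cosets $\{Gg_i\}$ of $\Gamma'/G$, so there are a permutation $\sigma$ of $\{1,\dots,r\}$ and elements $h_i\in G$ with $g_i^{-1}\gamma=h_i\,g_{\sigma(i)}^{-1}$, and unwinding this gives $g_{\sigma^{-1}(j)}\equiv \gamma g_j\pmod G$. Using that $E_{2k}^{\infty}=\sum_{g\in G_\infty\backslash G}1|_{2k}g$ is invariant of weight $2k$ under $G$, that $E_{2k}^{(g_i\infty)}=E_{2k}^{\infty}|_{2k}(g_i^{-1})$, and that $\chi$ is a character trivial on $G$, we compute
\begin{align*}
E_{2k}^{\chi}|_{2k}\gamma &= \sum_i\chi(g_i)\,E_{2k}^{\infty}|_{2k}(g_i^{-1}\gamma)=\sum_i\chi(g_i)\,E_{2k}^{(g_{\sigma(i)}\infty)}\\
&= \sum_j\chi(\gamma)\chi(g_j)\,E_{2k}^{(g_j\infty)}=\chi(\gamma)\,E_{2k}^{\chi}.
\end{align*}
Hence $E_{2k}^{\chi}$ always transforms with weight $2k$ and character $\chi$ for $\Gamma'$; only holomorphy can fail.

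\textbf{Holomorphy for $k\ge 2$.} Here the series defining $E_{2k}^{\infty}$ converges absolutely and locally uniformly on $\uhp$, so each $E_{2k}^{(g_i\infty)}$ is holomorphic on $\uhp$; a term-by-term estimate shows it stays bounded as $\tau$ approaches any cusp of $X(G)$ (all of width $6$), hence is holomorphic there. Thus $E_{2k}^{\chi}$ is a holomorphic modular form, and it is nonzero since, taking $g_1=\id$, its constant term at the cusp $g_1\infty$ equals $\chi(g_1)=1$.

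\textbf{The case $k=1$.} Now the series diverges, and one first defines $E_2^{(g_i\infty)}$ by Hecke's trick (analytic continuation in an auxiliary variable $s$ to $s=0$); the result is real-analytic but still satisfies the $G$-transformation law, so the computation above still yields $E_2^{\chi}|_2\gamma=\chi(\gamma)E_2^{\chi}$. The key point is the non-holomorphic defect: each $E_2^{(g_i\infty)}$ has the shape (a holomorphic function) $+\,c/\Im\tau$, with the \emph{same} constant $c$ for every $i$, because all cusps of $X(G)$ share the common width $6$ and $c$ depends only on this width. Since $\Gamma'/G$ is a quotient of $\Gamma'/\Gamma''\cong\ZZ^2$, it is abelian, so character orthogonality gives $\sum_i\chi(g_i)=0$ whenever $\chi\ne 1$; thus the $c/\Im\tau$ terms cancel in $E_2^{\chi}=\sum_i\chi(g_i)E_2^{(g_i\infty)}$, which is therefore holomorphic and, by the transformation law, a genuine modular form of weight $2$ and character $\chi$. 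When $\chi=1$ the sum $\sum_i E_2^{(g_i\infty)}$ is exactly the trace down to $\Gamma'$ of the weight-$2$ Eisenstein series at $\infty$; as $\Gamma'$ has a single cusp this is a scalar multiple of $E_2$, which carries a nonzero non-holomorphic defect and is only quasimodular — whence the exclusion. The main obstacle is precisely this weight-$2$ bookkeeping: one must know that the Hecke-regularized (or $\widehat{E_2}$-completed) series picks up a defect $c/\Im\tau$ with $c$ independent of the cusp, so that the character sum annihilates it exactly when $\chi\ne 1$; the input that all cusps of $X(G)$ have width $6$, established in the discussion of étale covers above, is what makes this go through, and everything else is routine.
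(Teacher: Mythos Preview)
Your argument is correct. The paper itself does not give a proof here but simply cites standard references (Miyake for $k\ge 2$ and for the congruence weight-$2$ case, Kubota for the noncongruence weight-$2$ case); what you have written is precisely the standard content those references supply, so there is no real divergence in approach---you have just made explicit what the paper leaves to citations.

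One small sharpening is worth noting in the $k=1$ step. You justify the equality of the non-holomorphic defects $c/\Im\tau$ across cusps by appealing to the common width $6$. That is valid, but the more direct reason in this setting is the normality of $G$ in $\Gamma'$: the group $\Gamma'$ then acts transitively on the cusps, the slash by $\gamma\in\Gamma'$ sends $\widehat{E}_2^{(c)}$ to $\widehat{E}_2^{(\gamma c)}$, and a one-line computation shows that $(1/y)|_2\gamma = 1/y + (\text{holomorphic})$, so the $1/y$-coefficient is preserved. This yields the same conclusion without needing to know in advance how the defect depends on the width. Either way, your cancellation via $\sum_i\chi(g_i)=0$ for $\chi\neq 1$ is exactly the right mechanism, and the identification of the $\chi=1$ sum with a multiple of $E_2$ (hence only quasimodular) is correct.
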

\begin{proof} The result is standard;  the case $k \ge 2$ is addressed in \cite[Section 2.6]{miyake}. When  $k=1$,  the congruence case is considered in \cite[Theorem 7.2.12]{miyake}, and the  noncogruence case can be derived using methods in \cite[Chapters III-IV]{kubota}.
\end{proof}
%\andrew{
%\begin{align*}
% E_{2k}^\chi|_{2k}(g_j)
% &=  \sum_i   \chi(g_i)E_{2k}^{\infty} |_{2k}(g_i^{-1}g_j) \\
% &=   \sum_i   \chi(g_jg_i)E_{2k}^{\infty} |_{2k}(g_i^{-1})  \\
% &=  \chi(g_j) \sum_i  \chi( g_i)E_{2k}^{\infty} |_{2k}(g_i^{-1})  \\
% &=\chi(g_j)   E_{2k}^\chi
% \end{align*}
%}

\begin{rmk}
For $k>1$ the forms $E^{(g_i\infty)}_{2k}$ are themselves holomorphic modular forms, for $k=1$ they are only quasi-modular.
\end{rmk}

\begin{prop}
Let $G=\Gamma'(N)$. Let $\chi$ be any character of $G/\Gamma'$ and write $\chi(g) = \langle T, \rho_N(g) \rangle$ for some $T\in X(G)[N]$ and where the pairing is the Weil pairing.

For $F \in M_2(G,\chi)$, the divisor of $F$ is precisely
\[ \sum_{[N]R = T} R. \]
\end{prop}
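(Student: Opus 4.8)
The statement concerns a weight-$2$ modular form $F$ on $G = \Gamma'(N)$ transforming under a character $\chi$ of $\Gamma'/G$, and asserts that its divisor is the sum of the $N^2$ points $R$ on the elliptic curve $X(G) = X(\Gamma'(N))$ with $[N]R = T$, where $T$ is the $N$-torsion point determined by $\chi$ via the Weil pairing. The plan is to compute both sides as divisors on $X(G)$, which (as established above) is the elliptic curve $\tilde y^2 = \tilde x^3 - 1728$ with the cusp at infinity as origin, and to identify $F$ explicitly. The natural candidate is the pullback under $[N]\colon X(G) \to X(\Gamma')$ of an appropriate section; indeed, recall from the discussion of the tower of isogenies that $\eta^4$, viewed as the invariant differential $dy/x$ on $X(\Gamma')$, pulls back (up to the division polynomial) to the invariant differential on $X(G)$, and that weight-$2$ forms on $\Gamma'$ correspond to differentials.

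First I would pin down $F$ up to scalar. The space $M_2(G,\chi)$ has a one-dimensional Eisenstein part plus whatever cusp forms exist; but a weight-$2$ form on a genus-one curve with no elliptic points has divisor of degree equal to the number of cusps (here $N^2$, since each cusp has width $6$ and the standard valence formula gives $\deg \divv(F) = (2k-2)(g-1) + \sum_{\text{cusps}} k \cdot (\text{something})$ — more precisely, a weight-$2$ form is a section of $\omega^{\otimes 2}$ and on a genus-one curve $\omega = \Omega^1(\text{cusps})$, so $\deg = 2 \cdot (\#\text{cusps}) \cdot \tfrac{1}{?}$; I would get this right by noting $\eta^4$ on $\Gamma'$ is a nowhere-vanishing weight-$2$ form, the unique such up to scalar, reflecting that $X(\Gamma')$ has one cusp and $\omega^{\otimes 2} \cong \mathcal{O}$ there after accounting for the width-$6$ cusp). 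Then $F/[N]^*(\eta^4)$ is a rational function on $X(G)$ whose divisor is supported on the $N$-torsion and the cusps, and whose transformation behavior under $\Gamma'/G \cong X(G)[N]$ (acting by translation) is dictated by $\chi$. By the theory of theta functions / the Weil pairing, a function on an elliptic curve $E$ that transforms under translation-by-$E[N]$ via the character $R' \mapsto \langle T, R'\rangle$ and has divisor supported on $E[N]$ must, up to scalar, be the function with divisor $\sum_{[N]R = T} R - \sum_{[N]R = O} R$ (both sums of $N^2$ points); this is exactly the statement that the line bundle $\mathcal{O}(\sum_{[N]R=T} R - \sum_{[N]R=O}R)$ is the one classified by $T$ under $E \cong \Pic^0(E)$, pulled back appropriately. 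Combining, $\divv(F) = [N]^*\divv(\eta^4) + \sum_{[N]R=T}R - \sum_{[N]R=O}R = \sum_{[N]R=T}R$, since $[N]^*\divv(\eta^4) = \sum_{[N]R=O}R$ (the cusps of $X(G)$ are the kernel of the isogeny $[N]$, each appearing with the multiplicity forced by $\divv(\eta^4) = 0$ on $X(\Gamma')$ combined with the ramification data — but since $[N]$ is étale, there is no ramification, and $\eta^4$ being nonvanishing means $[N]^*(\eta^4)$ vanishes exactly on the preimage of the cusp, i.e. on $X(G)[N]$ with multiplicity one... wait — actually $\eta^4$ as a weight-$2$ form is nonvanishing including at the cusp, so $[N]^*\eta^4$ is also nonvanishing; the vanishing I want comes from comparing $\eta^4$ on $\Gamma'$ with $\eta^4$ on $G$, i.e. from the division polynomial factor $\psi_N$, whose divisor is the $N$-torsion minus $N^2-1$ times the origin). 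I would make this precise using the remark $\eta^4 = \psi_N(\tilde x, \tilde y)\, (d\tilde y/\tilde x)^{\otimes N}$ from the excerpt.

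The cleanest route, which I would actually write: identify $F$ with $\psi_N(\tilde x,\tilde y) \cdot g_T$ where $g_T$ is (a constant multiple of) the Weierstrass-sigma-type function on $X(G)$ with divisor $\sum_{[N]R=T}R - N^2(O)$, normalized so the product is a weight-$2$ form on $G$; check it transforms under $\chi$ using the Weil pairing compatibility of translation with $\psi_N$ and $g_T$; check it is holomorphic (the poles of $g_T$ at $O$ are cancelled by the zeros of $\psi_N$ there, since $\psi_N$ vanishes to order $N^2-1$ at $O$ and $g_T$ has a pole of order $N^2$, leaving a simple zero — hmm, I need order $N^2$ of vanishing from $\psi_N$-type data; I'd instead use $\psi_N^2$ appropriately or absorb a factor, matching the degree count $\deg\divv(F) = N^2$). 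Then conclude by dimension count that this spans $M_2(G,\chi)$ and read off the divisor.

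\medskip

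The main obstacle will be getting all the multiplicities and normalizations exactly right: reconciling the weight-$2$/differential dictionary on the curve $X(\Gamma')$ (where the width-$6$ cusp makes $\eta^4$ a \emph{nonvanishing} weight-$2$ form rather than a differential with a zero), the precise divisor of the division polynomial $\psi_N$ on the model $\tilde y^2 = \tilde x^3 - 1728$, and the translation-transformation law linking $\chi$ to $T$ through the Weil pairing. In particular one must verify that the assignment $\chi \leftrightarrow T$ in the proposition is the \emph{correct} one (not off by a sign or an automorphism of $E[N]$) so that the divisor comes out as $\sum_{[N]R=T}R$ and not $\sum_{[N]R=-T}R$; I expect this is forced once one fixes the identification $\rho_N\colon \Gamma' \to X(G)[N]$ and the convention for the Weil pairing, but it requires care. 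Everything else — holomorphy, the degree count, and the uniqueness up to scalar — is routine once the setup is fixed.
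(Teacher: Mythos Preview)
Your core idea in the second paragraph---take the Weil-pairing function $g$ on $X(G)$ with divisor $\sum_{[N]R=T}R - \sum_{[N]R=O}R$ transforming under $\chi$, and set $F = \eta^4 \cdot g$---is exactly the paper's argument, and your computation $\divv(F) = \divv(\eta^4) + \divv(g) = \sum_{[N]R=T}R$ is correct. The paper adds only two ingredients you leave implicit: that $\dim M_2(G,\chi)=1$ (so $F$ is determined up to scalar and the divisor is well-defined), and the separate trivial-character case where $F=\eta^4$ already works since then $T=O$.

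The one genuine slip is your parenthetical ``wait --- actually $\eta^4$ as a weight-$2$ form is nonvanishing including at the cusp.'' This is false: $\eta^4 = q\prod_{n\geq 1}(1-q^{6n})^4$ (with $q=e^{2\pi i\tau/6}$) has a \emph{simple zero} at the unique cusp of $X(\Gamma')$. Since the cover $[N]\colon X(G)\to X(\Gamma')$ is \'etale, the pullback of $\eta^4$ to $X(G)$ therefore vanishes simply at each of the $N^2$ cusps, i.e.\ has divisor $\sum_{[N]R=O}R$, precisely as you first wrote before second-guessing yourself. With that corrected, your second paragraph is a complete proof and the $\psi_N$-based ``cleanest route'' in the third paragraph is an unnecessary detour (and, as written, does not produce a weight-$2$ form: $\psi_N\cdot g_T$ is a rational function, not a section of $\omega$).
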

\begin{proof}
For $\chi \neq 1$ we use the standard results used to construct the Weil pairing. That is, there exists a function $g$ on $X(G)$ which has divisor
\[ \sum_{[N]R = T} R- \sum_{[N]R=0} R. \]
and this function transforms with character $\langle T, \rho_N(g) \rangle$.
The function $\eta^4g$ is then a basis for the one dimensional space $ M_2(G,\chi)$.

For $\chi=1$ the function $\eta^4$ is a basis of $M_2(G,\chi)$ and has the correct divisor.
\end{proof}

\begin{prop}\label{formofform}
Let $G=\Gamma'(N)$. Then every weight $2k$ modular form has the form
\[ ( P(\tilde{x})+Q(\tilde{x})\tilde{y})\left(\frac{\eta^4}{\psi_N(\tilde{x},\tilde{y})}\right)^k \]
where $P,Q\in \CC[X]$ respectively have degree at most $\floor{kN^2/2}$ and $\floor{(kN^2-3)/2}$.
\end{prop}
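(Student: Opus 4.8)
The plan is to convert the statement into Riemann--Roch on the elliptic curve $E=X(\Gamma'(N))$ with Weierstrass model $\tilde y^2=\tilde x^3-1728$ and origin $O$ equal to the cusp $\infty$. Recall that the cusps of $\Gamma'(N)$ are exactly the points of $E[N]$ (the kernel of the isogeny $[N]\colon E\to X(\Gamma')$) and all have width $6$; put $D=\sum_{P\in E[N]}(P)$, of degree $N^2$. First I would identify modular forms with sections of a line bundle on $E$. Since $\Gamma'(N)$ has no elliptic points, dividing by $\eta^{4k}$ sends a meromorphic weight-$2k$ form $f$ to the rational function $f/\eta^{4k}$ on $E$. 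Because $\eta^{24}=\Delta$ has a simple zero at the cusp of $\SL_2(\ZZ)$, it vanishes to order $6$ in the uniformizer at each width-$6$ cusp of $\Gamma'(N)$, so $\eta^4$ vanishes to order exactly $1$ at every cusp and is nonzero on $\uhp$. Hence $f$ is a holomorphic modular form iff $f/\eta^{4k}\in H^0\!\big(E,\cO_E(kD)\big)$, i.e.
\[
M_{2k}(\Gamma'(N))=\eta^{4k}\cdot H^0\!\big(E,\cO_E(kD)\big).
\]

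Next I would compute the divisor of the division polynomial. The classical description gives that, as a function on $E$, $\psi_N(\tilde x,\tilde y)$ has a pole of order $N^2-1$ at $O$ and a simple zero at each nonzero $N$-torsion point, so
\[
\divv_E\!\big(\psi_N(\tilde x,\tilde y)\big)=\sum_{P\in E[N]\setminus\{O\}}(P)-(N^2-1)(O)=D-N^2(O);
\]
for $N$ odd this is the statement that $\psi_N\in\ZZ[\tilde x]$ has degree $(N^2-1)/2$ with simple zeros exactly at the $x$-coordinates of the nonzero $N$-torsion, and for $N$ even one writes $\psi_N=\tilde y\cdot(\text{polynomial in }\tilde x)$ and uses $\divv_E(\tilde y)=\sum_i(P_i)-3(O)$ over the $2$-torsion points $P_i$ (see \cite[Section 3.2]{WashingtonEC}). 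Consequently, for any rational function $h$ on $E$ one has $\divv(h\,\psi_N^{\,k})+kN^2(O)=\divv(h)+kD$, so multiplication by $\psi_N^{\,k}$ is an isomorphism $H^0(E,\cO_E(kD))\xrightarrow{\ \sim\ }H^0(E,\cO_E(kN^2\cdot O))$. Combining with the previous display,
\[
M_{2k}(\Gamma'(N))=\Big(\tfrac{\eta^4}{\psi_N(\tilde x,\tilde y)}\Big)^{\!k}\cdot H^0\!\big(E,\cO_E(kN^2\cdot O)\big).
\]

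Finally I would make the last Riemann--Roch space explicit. Writing $m=kN^2$ and using $\tilde y^2=\tilde x^3-1728$ to remove higher powers of $\tilde y$, the space $H^0(E,\cO_E(m\cdot O))$ is spanned by the functions $\tilde x^i$ (pole of order $2i$ at $O$) and $\tilde x^i\tilde y$ (pole of order $2i+3$), hence equals $\{P(\tilde x)+Q(\tilde x)\tilde y:\deg P\le\lfloor m/2\rfloor,\ \deg Q\le\lfloor (m-3)/2\rfloor\}$; its dimension $m$ matches $\dim M_{2k}(\Gamma'(N))$ as computed from the genus, the $N^2$ cusps, and the absence of elliptic points. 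Substituting $m=kN^2$ yields precisely the asserted form. I expect the one step needing genuine care to be the divisor of $\psi_N$ — especially the even-$N$ case and the reconciliation of orders at the cusps with their width $6$ — while checking that $\eta^4/\psi_N$ is actually holomorphic is then immediate, since $\divv_E(\psi_N)=D-N^2(O)\le D$.
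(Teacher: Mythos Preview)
Your proposal is correct and is essentially the same argument as the paper's, just with the details spelled out. The paper's proof is very terse: it simply asserts that the divisor of $\eta^4/\psi_N(\tilde x,\tilde y)$ is $N^2[\infty]$ (which is exactly your two divisor computations combined, $\divv(\eta^4)=D$ and $\divv(\psi_N)=D-N^2(O)$), observes that the displayed forms are therefore holomorphic, and then appeals to a dimension count --- precisely your Riemann--Roch description of $H^0(E,\cO_E(kN^2\cdot O))$ --- to conclude.
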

\begin{proof}
One readily verifies that the divisor of $\frac{\eta^4}{\psi_N(\tilde{x},\tilde{y})}$ is precisely $N^2[\infty]$, so that the collection of forms given are indeed all holomorphic. A dimension count then verifies that this gives the whole space.
\end{proof}

\begin{prop}\label{boundedpsi}
The Laurent series for $\frac{1}{\psi_N(\tilde{x},\tilde{y})}$ has bounded denominators for all primes $\ell$ with $(\ell,N)=1$.
\end{prop}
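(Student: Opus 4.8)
The plan is to prove the stronger assertion that for every prime $\ell$ with $\ell\nmid N$ the Laurent series of $1/\psi_N(\tilde x,\tilde y)$ has all of its coefficients in $\ZZ_\ell$, which certainly implies bounded denominators at $\ell$. The only external input I would need is Proposition \ref{bounded}: for such $\ell$ the series $\hat x=q^2\tilde x$ lies in $\pseries{\ZZ_\ell}{q}$, and the first step of the Hensel argument in its proof shows $\hat x\equiv N^2\pmod q$. Since $\ell\nmid N$, the constant term $N^2$ is a unit of $\ZZ_\ell$, so $\hat x\in\pseries{\ZZ_\ell}{q}^\times$; equivalently $\tilde x\in q^{-2}\pseries{\ZZ_\ell}{q}^\times$, with a pole of order exactly $2$ at $q=0$.

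The main step would be to control $\psi_N^2$. As already noted above, $\psi_N(\tilde x,\tilde y)^2$ is a polynomial in $\tilde x$ with integer coefficients, of degree $N^2-1$ and leading coefficient $N^2$; write $\psi_N(\tilde x,\tilde y)^2=\sum_{i=0}^{N^2-1}b_i\tilde x^{\,i}$ with $b_i\in\ZZ$ and $b_{N^2-1}=N^2$. Substituting $\tilde x=q^{-2}\hat x$ one gets
\[
q^{2(N^2-1)}\psi_N(\tilde x,\tilde y)^2=\sum_{i=0}^{N^2-1}b_i\,\hat x^{\,i}\,q^{\,2(N^2-1-i)}\ \in\ \pseries{\ZZ_\ell}{q},
\]
whose reduction modulo $q$ retains only the $i=N^2-1$ term, namely $b_{N^2-1}(N^2)^{N^2-1}=N^{2N^2}$, a unit of $\ZZ_\ell$. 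Hence $q^{2(N^2-1)}\psi_N(\tilde x,\tilde y)^2$ is a unit of $\pseries{\ZZ_\ell}{q}$, and therefore
\[
\frac{1}{\psi_N(\tilde x,\tilde y)^2}=q^{2(N^2-1)}\bigl(q^{2(N^2-1)}\psi_N(\tilde x,\tilde y)^2\bigr)^{-1}\ \in\ q^{2(N^2-1)}\pseries{\ZZ_\ell}{q}\ \subseteq\ \pseries{\ZZ_\ell}{q}.
\]

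It would then remain to pass from $1/\psi_N^2$ to $1/\psi_N=\psi_N(\tilde x,\tilde y)\cdot\bigl(1/\psi_N(\tilde x,\tilde y)^2\bigr)$, that is, to check that $\psi_N(\tilde x,\tilde y)$ is itself $\ell$-integral. If $N$ is odd this is immediate, since $\psi_N(\tilde x,\tilde y)=\psi_N(\tilde x)$ is a $\ZZ$-polynomial evaluated at the $\ell$-integral series $\tilde x$. If $N$ is even, then the hypothesis $\ell\nmid N$ forces $\ell$ to be odd, and $\psi_N(\tilde x,\tilde y)=\tilde y\,g(\tilde x)$ for some $g\in\ZZ[X]$, so it suffices to see that $\tilde y$ is $\ell$-integral: here $\tilde y\in\lseries{\QQ}{q}$ because $\tilde x^3-1728$ is a square in $\lseries{\QQ}{q}$, and $q^3\tilde y$ is a power series with $(q^3\tilde y)^2=\hat x^3-1728\,q^6$, a unit of $\pseries{\ZZ_\ell}{q}$ with constant term $N^6$; a coefficient-by-coefficient induction, using that $2\in\ZZ_\ell^\times$ and that the constant term of $q^3\tilde y$ is a square root of the $\ell$-unit $N^6$, then forces $q^3\tilde y\in\pseries{\ZZ_\ell}{q}$. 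Combining these gives $1/\psi_N(\tilde x,\tilde y)\in\pseries{\ZZ_\ell}{q}$, as desired.

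I do not anticipate a genuine obstacle here: the argument is essentially bookkeeping with $q$-adic valuations, together with the observation that every leading coefficient that occurs ($N^2$, $N^{2N^2}$, $N^6$) is a power of $N$ and hence an $\ell$-adic unit whenever $\ell\nmid N$. The one place that needs care is the even-$N$ case, where $\psi_N$ genuinely involves $\tilde y$; there it matters that $(\ell,N)=1$ excludes $\ell=2$, so that extracting the square root defining $\tilde y$ introduces no $\ell$-denominator.
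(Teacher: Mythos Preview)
Your proof is correct and follows essentially the same approach as the paper: both arguments hinge on the observation that the leading Laurent coefficient of $\psi_N^2$ is $N^{2N^2}$, an $\ell$-adic unit when $\ell\nmid N$, so that $q^{2(N^2-1)}\psi_N^2$ is a unit of $\pseries{\ZZ_\ell}{q}$ and inversion introduces no $\ell$-denominators. You have simply been more explicit than the paper about the passage from $1/\psi_N^2$ to $1/\psi_N$, handling the even-$N$ case (where $\psi_N$ genuinely involves $\tilde y$) by checking separately that $q^3\tilde y\in\pseries{\ZZ_\ell}{q}$ via a square-root extraction; the paper leaves this step implicit.
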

\begin{proof}
From the proof of Proposition \ref{bounded} we verify that the Laurent series of $\psi_N^2$ is of the form 
\[ N^{2N^2}q^{-2N^2-2} + O( q^{-2N^2-1} ) \]
from which if follows that inverting $\psi_N$ does not introduce denominators for primes not dividing $N$.
\end{proof}

\begin{prop}
The unbounded denominators for modular forms on $\Gamma'(N)$ are precisely at odd primes dividing $N$, and if $4|N$, also at $2$.
Specifically, there are modular forms with unbounded denominators at each of these primes, and these are the only primes which can contribute unbounded denominators.
\end{prop}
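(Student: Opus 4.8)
The assertion splits into two parts: \textbf{(A)} if $\ell$ is a prime not in the stated set, then every modular form on $\Gamma'(N)$ has bounded $\ell$-denominators; and \textbf{(B)} for each prime $p$ in the set, some modular form on $\Gamma'(N)$ has unbounded $p$-denominators. For (B) it is enough to exhibit one witness, and the plan is to use the weight $4$ form $\tilde x\,\eta^8$. This lies in $M_4(\Gamma'(N))$: it is the case $k=2$, $Q=0$, $P(X)=X\psi_N(X)^2$ of Proposition \ref{formofform} (here $\psi_N^2$ is viewed as a polynomial in $\tilde x$, of degree $N^2-1$, so $\deg P=N^2\le N^2$), equivalently the order $2$ pole of $\tilde x$ at the cusp is cancelled by the contribution of $[\infty]$ to $\divv(\eta^8)=2\divv(\eta^4)$. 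Since $\eta^8=q^2\prod_{n\ge1}(1-q^{6n})^8\in\pseries{\ZZ}{q}$ has leading coefficient $1$, the Laurent series $\eta^{-8}\in q^{-2}\pseries{\ZZ}{q}$ is integral at every prime; hence if $\tilde x\,\eta^8$ had bounded $p$-denominators so would $\tilde x=(\tilde x\,\eta^8)\,\eta^{-8}$, contradicting Proposition \ref{unbounded} for odd $p>3$ with $p\mid N$ and Proposition \ref{unbounded23} for $p=3\mid N$ and for $p=2$ when $4\mid N$. As these are exactly the primes in the set, (B) follows.

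For (A), suppose first $\ell\nmid N$. By Proposition \ref{formofform} every weight $2k$ form on $\Gamma'(N)$ is a finite $\overline{\QQ}$-linear combination of the Laurent series $\tilde x^{\,i}(\eta^4/\psi_N)^k$ and $\tilde x^{\,j}\tilde y\,(\eta^4/\psi_N)^k$, so it suffices to bound the $\ell$-denominators of these. Now $\eta^4,E_4\in\pseries{\ZZ}{q}$, the series $\tilde x$ is $\ell$-integral by Proposition \ref{bounded}, and $1/\psi_N$ has bounded $\ell$-denominators by Proposition \ref{boundedpsi}; the only input needing comment is that $\tilde y$ has bounded $\ell$-denominators. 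For $\ell$ odd this comes from $\tilde y^2=\tilde x^3-1728$: writing $\tilde x^3-1728=N^6q^{-6}g$ with $g\in 1+q\,\pseries{\ZZ_{(\ell)}}{q}$, one has $\tilde y=N^3q^{-3}\sum_{n\ge0}\binom{1/2}{n}(g-1)^n$, and since $\binom{1/2}{n}=(-1)^{n-1}C_{n-1}/2^{2n-1}$ with $C_{n-1}$ a Catalan number, these coefficients are $\ell$-integral, whence $\sqrt{g}\in\pseries{\ZZ_{(\ell)}}{q}$. For $\ell=2$ — where necessarily $N$ is odd — I would instead use the $y$-coordinate of the multiplication-by-$N$ isogeny on $X(\Gamma')$: from $E_6/\eta^{12}=\omega_N(\tilde x,\tilde y)/\psi_N(\tilde x,\tilde y)^3$ and $\omega_N=\tilde y\,\Omega_N(\tilde x)$ with $\Omega_N\in\ZZ[X]$ monic, one gets $\tilde y=E_6\,\psi_N(\tilde x)^3/(\eta^{12}\,\Omega_N(\tilde x))$, which is $2$-integral because $\eta^{-12}\in q^{-3}\pseries{\ZZ}{q}$ and $1/\Omega_N(\tilde x)$ is $2$-integral ($\Omega_N$ monic with integral coefficients and the leading Fourier coefficient of $\tilde x$ is the $2$-adic unit $N^2$). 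This disposes of (A) for all $\ell\nmid N$.

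It remains to treat $\ell=2$ with $2\parallel N$, say $N=2M$ with $M$ odd; this is the technical heart of the statement, since Propositions \ref{bounded} and \ref{boundedpsi} no longer apply. Here the identity $2M\,\ZZ[\sqrt{-3}]=2\,\ZZ[\sqrt{-3}]\cap M\,\ZZ[\sqrt{-3}]$ gives $\Gamma'(2M)=\Gamma'(2)\cap\Gamma'(M)$, so $X(\Gamma'(2M))\cong X(\Gamma'(2))\times_{X(\Gamma')}X(\Gamma'(M))$ and $M_k(\Gamma'(2M))$ is spanned by products $f\cdot g$ with $f$ a modular form on the congruence subgroup $\Gamma'(2)$ and $g$ a rational function on $X(\Gamma'(M))$ holomorphic away from the cusps, i.e.\ a polynomial in $\tilde x_M,\tilde y_M$. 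The $q$-expansion of $f$ has bounded $2$-denominators because $\Gamma'(2)$ is congruence, and $\tilde x_M,\tilde y_M$ have bounded $2$-denominators by the case already handled ($2\nmid M$); since the covering maps are unramified at the cusps, the $q$-expansion of $f\cdot g$ is the product of the two (bounded) $q$-expansions, which finishes (A). Alternatively one can rerun the Hensel argument of Proposition \ref{unbounded23} with the reductions of the rescaled division polynomials recorded in Lemma \ref{lempsi23}: when $v_2(N)=1$ that rescaling yields a power-series solution whose $2$-denominators are bounded by a fixed power of $2$ rather than growing. The step I expect to demand the most care is exactly this $2\parallel N$ case — in particular justifying the fibre-product decomposition of $M_k$ (that the products $f\cdot g$ exhaust it) and controlling the bounded power of $2$ introduced when passing between the $x^3-1728$ and $x^3-1$ Weierstrass models that Lemma \ref{lempsi23} uses.
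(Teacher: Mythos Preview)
Your argument for (B) and for (A) at primes $\ell\nmid N$ matches the paper's exactly. For $\ell=2$ when $4\nmid N$, the paper dispatches both subcases with the single line ``$\tilde{x}$ generates the function field as an extension of a congruence curve, hence $\tilde{y}$ can be expressed as a polynomial in $\tilde{x}$ with coefficients modular functions with bounded denominators.'' Your $\omega_N$ formula for $N$ odd is precisely the explicit form of this, with the congruence curve being $X(\Gamma')$ itself: the identity $\tilde y = (E_6/\eta^{12})\,\psi_N(\tilde x)^3/\Omega_N(\tilde x)$ is exactly ``$\tilde y$ as a polynomial in $\tilde x$ with congruence-modular coefficients.''

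For $2\parallel N$, however, your fibre-product route has a real gap: the assertion that products $f\cdot g$ with $f\in M_k(\Gamma'(2))$ and $g$ a rational function on $X(\Gamma'(M))$ span $M_k(\Gamma'(2M))$ is not obvious, and as stated probably requires as much work as the result itself. The cleaner path---which is what the paper's one-liner encodes and what your ``rerun Hensel'' alternative is pointing toward---is to take the congruence curve to be $X(\Gamma'(2))$ and use the odd-degree isogeny $[M]$ with $M=N/2$. The relation $\phi_M(\tilde x_N)=\tilde x_2\,\psi_M(\tilde x_N)^2$ is a monic equation of degree $M^2$ for $\tilde x_N$ with coefficients in $\pseries{\ZZ_{(2)}}{q}$ (since $\tilde x_2$ is congruence hence $2$-integral), so the Hensel argument of Proposition~\ref{bounded} applies verbatim with $M$ in place of $N$ and gives $\tilde x_N$ with bounded $2$-denominators directly; then $\tilde y_N=\tilde y_2\,\psi_M(\tilde x_N)^3/\Omega_M(\tilde x_N)$ finishes exactly as in your odd-$N$ case. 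This bypasses the spanning issue entirely and is what I would recommend you write instead.
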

\begin{proof}
That no other primes appear in denominators follows from Propositions \ref{bounded}, \ref{formofform} and \ref{boundedpsi}, and using that from the equation 
\[ \tilde{y}^2 = \tilde{x}^3 - 1728 \]
the denominators for $\tilde{y}$ cannot be worse than those for $\tilde{x}$, except hypothetically at $p=2$. 
However, in the case of $4\nmid N$ then $\tilde{x}$ generates the function field as an extension of a congruence curve, hence $\tilde{y}$ can be expressed as a polynomial in $\tilde{x}$ with coefficients modular functions with bounded denominators.

That unbounded denominators exist follows immediately from \ref{unbounded}, \ref{unbounded23}, \ref{formofform}, and \ref{boundedpsi}.
\end{proof}

\subsubsection{Dimension Formulas}

\begin{prop}
Suppose $\Gamma'' \subset G \subset \Gamma'$. Denote by $S_k$ and $\cE_k$, respectively, the space of cusp forms of weight $k$, and the space generated by Eisenstein series. Then the following facts hold:
\begin{enumerate}
\item The dimension for the space of modular forms $M_{2k}(G)$ is precisely $k[G:\Gamma']$, and, for $k>1$, we have that $S_{2k}(G)$ has dimension precisely $(k-1)[G:\Gamma']$.

\item The dimension for the space of modular forms $M_{2k}(G,\chi)$ is precisely $k$.

\item For $\chi\neq 1$ we have $S_{2k}(G,\chi) = \eta^4 M_{2(k-1)}(G,\chi)$, it is of dimension $k-1$, and $\cE_{2k}(G,\chi)$ has dimension $1$.

\item For $\chi=1$ we have $S_{2k}(G,\chi) = S_{2k}(\Gamma')$. For $k>1$ this has dimension $k-1$, and for $k=1$ this has dimension $1$. The space $\cE_{2}(G,\chi) = \cE_2(\Gamma')$ has dimension $0$, whereas $\cE_{2k}(G,\chi) = \cE_{2k}(\Gamma')$ has dimension $1$ for $k>1$.
\end{enumerate}
\end{prop}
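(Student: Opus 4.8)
The plan is to turn every assertion into a computation of $h^0$ of a line bundle on a curve of genus $1$ and then quote Riemann--Roch. Write $d=[\Gamma':G]$. From the discussion of the tower of isogenies, $X(G)$ has genus $1$, the cover $X(G)\to X(\Gamma')$ is \'etale (an isogeny), and $X(G)$ has exactly $d$ cusps, all of width $6$; let $C$ be the reduced divisor supported on these cusps, so $\deg C=d$, and let $\Omega$ be the canonical bundle of $X(G)$, of degree $0$. On a genus-$1$ curve, $h^0(\cL)=\deg\cL$ whenever $\deg\cL>0$, while $h^0(\cL)$ is $1$ or $0$ according as $\cL$ is trivial or not when $\deg\cL=0$, and $h^0(\Omega)=1$. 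Since $G$ has no elliptic points, the standard dictionary (see \cite[Chapter 3]{DS}) identifies $M_{2k}(G)\cong H^0(X(G),\Omega^{\otimes k}(kC))$ and $S_{2k}(G)\cong H^0(X(G),\Omega^{\otimes k}((k-1)C))$, the shift by $kC$ (resp.\ $(k-1)C$) recording that $f\,(d\tau)^{\otimes k}$ has a pole of order $\le k$ (resp.\ $\le k-1$) at a cusp. Since $\deg\Omega^{\otimes k}(mC)=md$, this gives $\dim M_{2k}(G)=kd$ for $k\ge1$ and $\dim S_{2k}(G)=(k-1)d$ once $(k-1)d>0$, i.e.\ for $k>1$; for $k=1$ the cuspidal space is $H^0(X(G),\Omega)$, of dimension $1$, which is why that case is excluded. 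This is part~(1) (the cusp-form count is also confirmed below).

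For parts~(2)--(4) I assume in addition $G\triangleleft\Gamma'$, so that $A=\Gamma'/G$ acts on $M_{2k}(G)$ and $S_{2k}(G)$ and these split as direct sums of eigenspaces $M_{2k}(G,\chi)$, $S_{2k}(G,\chi)$ over $\chi\in\widehat A$. For $\chi=1$ a form transforming trivially under $\Gamma'$ is simply a form for $\Gamma'$, and since the cover is \'etale above the unique cusp of $X(\Gamma')$ we get $M_{2k}(G,1)=M_{2k}(\Gamma')$ and $S_{2k}(G,1)=S_{2k}(\Gamma')$; applying the first paragraph with $\Gamma'$ in place of $G$ (so $d=1$) yields $\dim M_{2k}(\Gamma')=k$, $\dim S_{2k}(\Gamma')=k-1$ for $k>1$ and $\dim S_2(\Gamma')=1$, whence $\dim\cE_{2k}(\Gamma')=1$ for $k>1$ and $0$ for $k=1$. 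This is part~(4).

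For a nontrivial $\chi$ I would descend to $X(\Gamma')$: a weight-$2k$ form transforming by $\chi$ under $\Gamma'$ is a meromorphic section of $\Omega_{X(\Gamma')}^{\otimes k}(kc_0)\otimes\cN_\chi$, where $c_0$ is the cusp of $X(\Gamma')$ and $\cN_\chi\in\Pic^0(X(\Gamma'))$ is the degree-$0$ line bundle attached to $\chi$ (for $G=\Gamma'(N)$ this is $\cO(T_\chi-0)$ with $T_\chi$ the torsion point paired with $\chi$ under the Weil pairing, as in the preceding proposition; in general $A$ is a quotient of $\Gamma'/\Gamma''\cong\ZZ^2$ and $\cN_\chi$ comes from the dual isogeny). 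Holomorphy on the upper half-plane and at $c_0$ translates exactly into lying in $H^0$, so $M_{2k}(G,\chi)\cong H^0(X(\Gamma'),\Omega_{X(\Gamma')}^{\otimes k}(kc_0)\otimes\cN_\chi)$, a degree-$k$ bundle on a genus-$1$ curve; hence $\dim M_{2k}(G,\chi)=k$, which is part~(2). Part~(3) is then formal: $\eta^4\in M_2(\Gamma')$, viewed as a section of $\Omega_{X(\Gamma')}(c_0)$, has a single simple zero, located at the cusp, so multiplication by $\eta^4$ embeds $M_{2(k-1)}(G,\chi)$ into $M_{2k}(G,\chi)$ with image exactly the forms vanishing at every cusp of $X(G)$ --- indeed for $f\in S_{2k}(G,\chi)$ the quotient $f/\eta^4$ is holomorphic on the upper half-plane and has nonnegative order at each cusp. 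Thus $S_{2k}(G,\chi)=\eta^4 M_{2(k-1)}(G,\chi)$ has dimension $k-1$, and since the Eisenstein series $E_{2k}^\chi$ of the preceding proposition is a nonzero holomorphic non-cuspidal form when $\chi\ne1$, $\cE_{2k}(G,\chi)=\CC\cdot E_{2k}^\chi$ has dimension $1$. Finally, summing over $\chi\in\widehat A$ gives $\dim S_{2k}(G)=\sum_\chi\dim S_{2k}(G,\chi)=d(k-1)$ for $k>1$, matching part~(1).

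The only genuinely delicate step is the identification in part~(2): attaching to the character $\chi$ of $\Gamma'/G$ the correct degree-$0$ bundle $\cN_\chi$ on $X(\Gamma')$ and checking that holomorphy of a weight-$2k$ form at the width-$6$ cusps corresponds precisely to the divisor shift by $kc_0$. This is essentially the content of the preceding proposition, which I would adapt from the groups $\Gamma'(N)$ to an arbitrary $G$ with $\Gamma''\subset G\subset\Gamma'$, taking care at the low weights (e.g.\ $k=1$), where a degree-$0$ bundle intervenes and ``dimension $=$ degree'' must be replaced by the appropriate special case of Riemann--Roch.
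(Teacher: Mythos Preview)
Your proposal is correct, and in fact considerably more detailed than the paper's own treatment: the paper simply asserts that ``the claims all follow from general principles'' and cites Schoeneberg, giving no argument at all. Your Riemann--Roch computation on the genus-$1$ curves $X(G)$ and $X(\Gamma')$ is the right way to unpack that citation.

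Two small remarks. First, your added hypothesis $G\triangleleft\Gamma'$ for parts~(2)--(4) is automatically satisfied: since $\Gamma''\subset G\subset\Gamma'$ and $\Gamma'/\Gamma''\cong\ZZ^2$ is abelian, $G/\Gamma''$ is normal in $\Gamma'/\Gamma''$, hence $G$ is normal in $\Gamma'$. So you need not flag this as an extra assumption. Second, your closing worry about ``adapting'' the preceding proposition from $\Gamma'(N)$ to general $G$ is unnecessary. The assignment $\chi\mapsto\cN_\chi$ you need is just the standard correspondence between finite-order characters of $\pi_1(X(\Gamma'))\cong\Gamma'/\Gamma''$ and torsion line bundles in $\Pic^0(X(\Gamma'))$; it does not rely on the Weil-pairing description for $\Gamma'(N)$, and in particular you avoid any circularity with the later Proposition~\ref{prop:incluofspace} (whose proof invokes the very dimension formula you are establishing here).
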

The claims all follow from general principles, see for example \cite{Schoeneberg}.

\begin{prop}\label{prop:incluofspace}
Suppose $\Gamma'' \subset G \subset \Gamma'$ and fix $N$ such that $\Gamma'(N) \subset G$. Let $\chi$ be any character of $\Gamma'/G$, and denote by $\tilde{\chi}$ the character of $\Gamma'/\Gamma'(N)$ induced
by the map
\[ \Gamma'/\Gamma'(N) \rightarrow \Gamma'/G \overset{\chi}\rightarrow \CC^\times. \]
We have
\[ M_{2k}(G,\chi) = M_{2k}(\Gamma'(N),\tilde{\chi}). \]
\end{prop}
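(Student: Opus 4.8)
The plan is to prove the two inclusions $M_{2k}(G,\chi)\subseteq M_{2k}(\Gamma'(N),\tilde\chi)$ and $M_{2k}(\Gamma'(N),\tilde\chi)\subseteq M_{2k}(G,\chi)$ separately, and then observe that both spaces have the same dimension, so that equality on one side suffices. The forward inclusion is essentially formal: since $\Gamma'(N)\subseteq G$, any $F\in M_{2k}(G,\chi)$ is in particular a weakly holomorphic function on the upper half plane satisfying $F|_{2k}\gamma = \chi(\gamma)F = \tilde\chi(\gamma)F$ for $\gamma\in\Gamma'(N)$, by the very definition of $\tilde\chi$ as the pullback of $\chi$ along $\Gamma'/\Gamma'(N)\to\Gamma'/G$. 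Holomorphy at the cusps is inherited because the cusps of $X(\Gamma'(N))$ lie over those of $X(G)$, and all cusps have width $6$ on both curves (as noted in the discussion of the tower of isogenies, since $\Delta(6)^- \subseteq \Gamma'(N) \subseteq G$), so the local conditions match up. Hence $F\in M_{2k}(\Gamma'(N),\tilde\chi)$.

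For the reverse inclusion, the key point is to invoke the dimension formulas already established. By Proposition (the one labeled with part (2) above), $\dim M_{2k}(G,\chi) = k$, and likewise $\dim M_{2k}(\Gamma'(N),\tilde\chi) = k$, since $\tilde\chi$ is a character of $\Gamma'/\Gamma'(N)$ and the same proposition applies with $G$ replaced by $\Gamma'(N)$. (One should check that $\tilde\chi$ is nontrivial iff $\chi$ is, but the dimension formula gives $k$ in both the trivial and nontrivial cases, so this is not even needed.) Combining this with the forward inclusion of the previous paragraph, we have a containment of finite-dimensional vector spaces of equal dimension, hence equality.

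The only real content is thus the verification that the forward inclusion is genuinely well-defined — that is, that a form for $(G,\chi)$ really does transform under $\tilde\chi$ for $\Gamma'(N)$, and that the cusp conditions transfer. Both are immediate from the construction of $\tilde\chi$ and from the fact that $X(\Gamma'(N)) \to X(G)$ is an étale cover of curves whose cusps all have width $6$; no computation with $q$-expansions is required. I would therefore write the proof in two short paragraphs: first the transformation-and-holomorphy check giving $M_{2k}(G,\chi)\subseteq M_{2k}(\Gamma'(N),\tilde\chi)$, then the dimension count forcing equality. The main (very mild) obstacle is simply making sure the definition of $\tilde\chi$ is unwound correctly so that the transformation law is transported without sign or normalization errors; there is no deep step.
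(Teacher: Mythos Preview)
Your proof is correct and follows exactly the same approach as the paper: establish the natural inclusion $M_{2k}(G,\chi)\hookrightarrow M_{2k}(\Gamma'(N),\tilde\chi)$ and then invoke the dimension formula (both spaces have dimension $k$) to conclude equality. The paper's proof is a one-line version of what you wrote; your extra care with the cusp conditions is not strictly necessary but does no harm.
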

\begin{proof}
We have a natural inclusion $M_{2k}(G,\chi)  \rightarrow  M_{2k}(\Gamma'(N),\tilde{\chi})$, and the spaces have the same dimension.
\end{proof}

\begin{prop}
  \label{p:generation1}
Suppose $\Gamma'' \subset G \subset \Gamma'$ and that for every character $\chi$ of $\Gamma'/G$ there exist non-trivial characters $\chi_1$ and $\chi_2$ of $\Gamma'/G$ such that $\chi = \chi_1\chi_2$. Then the graded ring $M(G)=\oplus_{k \in \ZZ} M_k(G)$ is generated in weight $2$.
In particular this holds provided $\abs{\Gamma'/G} > 3$.
\end{prop}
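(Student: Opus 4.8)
The plan is to argue that every $M_{2k}(G)$ is spanned by products of lower-weight forms by reducing to the character-isotypic pieces and then using the known description of forms on $\Gamma'(N)$. First I would fix $N$ with $\Gamma'(N)\subseteq G$, and decompose each $M_{2k}(G)$ according to the characters $\chi$ of the finite abelian group $\Gamma'/G$; by Proposition~\ref{prop:incluofspace} each summand $M_{2k}(G,\chi)$ coincides with $M_{2k}(\Gamma'(N),\tilde\chi)$, so it suffices to show the graded ring is generated in weight $2$ after restricting to these isotypic components. By the dimension formula (the proposition preceding Proposition~\ref{prop:incluofspace}), $\dim M_{2k}(G,\chi) = k$, and $S_{2k}(G,\chi) = \eta^4 M_{2(k-1)}(G,\chi)$ for $\chi\neq 1$, so the content is really about producing enough Eisenstein-type forms.

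Next I would use the hypothesis: given $\chi$, write $\chi = \chi_1\chi_2$ with $\chi_i$ nontrivial. Pick $F_i \in M_2(G,\chi_i)$; then $F_1 F_2 \in M_4(G,\chi)$, and more generally I want to show that the weight-$2k$ space is spanned by $k$-fold products of weight-$2$ forms carrying the appropriate characters. The clean way is to invoke Proposition~\ref{formofform}: on $\Gamma'(N)$ every weight-$2k$ form is $(P(\tilde x)+Q(\tilde x)\tilde y)(\eta^4/\psi_N)^k$ with explicit degree bounds on $P,Q$. Since $M_2(\Gamma'(N),\tilde\chi)$ already contains a form of the shape $(p(\tilde x)+q(\tilde x)\tilde y)(\eta^4/\psi_N)$, and $M_2(G,\mathbf 1)=\CC\eta^4$, by multiplying $k-1$ copies of $\eta^4/\psi_N$-type weight-$2$ forms against a single weight-$2$ form of character $\chi$ I can realize $(\eta^4/\psi_N)^k$ times any monomial in $\tilde x$ (and $\tilde x^j\tilde y$) up to the allowed degrees — the point being that $\tilde x = E_4/\eta^8 \cdot(\text{unit})$ is itself, up to weight-$4$ modular forms and powers of $\psi_N$, accessible from weight-$2$ data, so ranging over products sweeps out a spanning set in each degree. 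A dimension count ($\dim = k$ in each character, $k[G:\Gamma']$ in total) then forces generation.

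The main obstacle is the genuinely low-weight check: one must verify that weight-$4$ is generated by products of weight-$2$ forms, i.e. that the multiplication maps $\bigoplus_{\chi_1\chi_2=\chi} M_2(G,\chi_1)\otimes M_2(G,\chi_2) \to M_4(G,\chi)$ are surjective for every $\chi$, and this is exactly where the hypothesis $|\Gamma'/G|>3$ enters — one needs at least two independent nontrivial characters (or a nontrivial character of order $\geq 3$, handled by $\chi_1=\chi_2$ when $|\Gamma'/G|\geq 4$ forces such a splitting to exist, being careful in the case $\Gamma'/G\cong(\ZZ/2)^2$ where orders are all $2$ but there are three distinct nontrivial characters so every product decomposition is available). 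Concretely: for $\chi=\mathbf 1$ one uses $E_4, E_6, \eta^4$ which already generate $M(\Gamma')$; for $\chi\neq\mathbf 1$ one checks the $2$-dimensional space $M_4(G,\chi)$ is hit by $\eta^4\cdot M_2(G,\chi)$ together with one product $F_1F_2$ with $F_i$ of nontrivial character, and these are linearly independent because $\eta^4$ is nonvanishing while a generic product $F_1F_2$ vanishes somewhere. Once weight $4$ is settled, an induction on $k$ using $M_{2k}(G,\chi) = M_2(G,\mathbf 1)\cdot M_{2(k-1)}(G,\chi) + (\text{products raising character})$ and comparing dimensions finishes it; I expect the only real care needed is bookkeeping the two sub-cases of $\Gamma'/G$ of order exactly $4$.
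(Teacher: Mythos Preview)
Your overall strategy matches the paper's: decompose $M_{2k}(G)$ into character pieces, use $S_{2k}(G,\chi)=\eta^4 M_{2(k-1)}(G,\chi)$ to handle the cuspidal part inductively, and then exhibit one non-cuspidal element of $M_{2k}(G,\chi)$ as a product of lower-weight forms. The paper does exactly this, but much more directly: writing $\chi=\chi_1\chi_2$ with $\chi_i$ nontrivial and $k=k_1+k_2$, the product $E_{2k_1}^{\chi_1}E_{2k_2}^{\chi_2}$ of Eisenstein series is the required non-cuspidal element, because each $E_{2k_i}^{\chi_i}$ takes the value $\chi_i(g_j)\neq 0$ at the cusp $g_j\infty$, so the product is non-vanishing at every cusp. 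That is the entire argument; no appeal to Proposition~\ref{formofform} or to the explicit coordinates $\tilde x,\tilde y,\psi_N$ is needed, and no special bookkeeping for $\abs{\Gamma'/G}=4$ arises.

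Where your proposal goes wrong is precisely at the key step. You want to show that your product $F_1F_2$ (which, since $\dim M_2(G,\chi_i)=1$, is just a scalar multiple of $E_2^{\chi_1}E_2^{\chi_2}$) lies outside $\eta^4 M_{2}(G,\chi)=S_4(G,\chi)$. Your stated reason --- ``$\eta^4$ is nonvanishing while a generic product $F_1F_2$ vanishes somewhere'' --- is backwards: $\eta^4$ \emph{is} a cusp form, so anything in $\eta^4 M_{2(k-1)}(G,\chi)$ vanishes at all cusps, and what you need is that $F_1F_2$ does \emph{not} vanish at some cusp. That is exactly what the Eisenstein description gives. Relatedly, your handling of $\chi=\mathbf{1}$ is incomplete: saying ``$E_4,E_6,\eta^4$ generate $M(\Gamma')$'' does not help, since $E_4$ and $E_6$ are not themselves in weight $2$; the point is rather that $E_2^{\chi_1}E_2^{\chi_1^{-1}}$ (for any nontrivial $\chi_1$) supplies the missing non-cuspidal class in $M_4(G,\mathbf{1})$, and similarly in higher weight. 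Once you replace your divisor-comparison heuristic with the cusp-value argument for Eisenstein series, the proof collapses to the paper's two-line version.
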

\begin{proof}
For $k>1$ we have $S_{2k}(G,\chi) = \eta^4 M_{2(k-1)}(G,\chi)$ which, using the dimension formulas, reduces the problem to finding at least one element of $M_{2k}(G,\chi)\setminus S_{2k}(G,\chi) $ coming from lower weights.
Letting $k = k_1+k_2$ we see that $E_{2k_1}^{\chi_1}E_{2k_2}^{\chi_2}$ is precisely such an element.
\end{proof}

\begin{rmk}
Notice that the condition on the group of characters in Proposition \ref{p:generation1} is necessary, as indeed $M(\Gamma')$ is not generated in weight $2$. 
\end{rmk}

\subsection{The groups $G_p$}
\label{ss:Gp}
There are many intermediate $\phi$-congruence subgroups that are not contained in $\Gamma'$, and so are not accounted for in the preceding discussion. One way to study modular forms coming from such groups $G$ is to relate forms on $G$ to forms on $G\cap \Gamma'$. For example, by noting that the associated curves are quotients of an elliptic curve by an action of the group $G/(G\cap \Gamma')$, we can already see via the Hurwitz genus formula that these curves would virtually all have genus $0$.

We will not pursue a detailed study of all these groups here, but instead we close this discussion by identifying a family of these genus zero $\phi$-congruence subgroups that are maximal subgroups of $\Gamma$.

For $p\equiv 5\pmod{12}$ a prime let
\begin{equation}
  \label{d:Gp}
  G_p\df \phi^{-1}_p\left\{ \twomat{u}{v}{0}{u^{-1}}\mid u^4=1, v\in \FF_p\right\}.
\end{equation}
Notice that $S,T^3 \in G_p$ for all primes $p$.
\begin{lem}
For each prime $p\equiv 5\pmod{12}$, the group $G_p$ is noncongruence in $\Gamma$ of index $3p$.
\end{lem}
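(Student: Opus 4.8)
The plan is to compute the index and the non-congruence property separately, using the explicit description of $\phi_p(\Gamma)$ from Lemma~\ref{l:pimage}. First I would establish the index. Since $G_p = \phi_p^{-1}(H_p)$ where $H_p = \{\stwomat{u}{v}{0}{u^{-1}} \mid u^4 = 1,\ v \in \FF_p\}$, and since $\phi_p$ is surjective onto the group $\phi_p(\Gamma) = \{\stwomat{u}{v}{0}{u^{-1}} \mid u^{12}=1,\ v\in\FF_{p^2}\}$ of order $12p^2$, one has $[\Gamma : G_p] = [\phi_p(\Gamma) : H_p]$ provided $H_p \subseteq \phi_p(\Gamma)$. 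For this containment I need that the fourth roots of unity lie in $\FF_{p^2}$ (automatic) and, crucially, that $\FF_p \subseteq \FF_{p^2}$, which is obvious; so $H_p$ is genuinely a subgroup of $\phi_p(\Gamma)$. Counting, $\abs{H_p} = 4p$, so $[\phi_p(\Gamma) : H_p] = 12p^2/(4p) = 3p$, giving $[\Gamma : G_p] = 3p$ as claimed. (I should double-check that $H_p$ is a subgroup: closure under multiplication $\stwomat{u_1}{v_1}{0}{u_1^{-1}}\stwomat{u_2}{v_2}{0}{u_2^{-1}} = \stwomat{u_1u_2}{u_1v_2 + v_1u_2^{-1}}{0}{(u_1u_2)^{-1}}$ has upper-right entry in $\FF_p$ since all quantities are, and $u_1u_2$ is again a fourth root of unity — fine.)

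For the non-congruence claim I would apply Wohlfahrt's criterion. The key observation is that $T^3 \in G_p$ but $T^{12}$ is the smallest power of $T$ that lies in $\ker\phi$ (indeed $\phi(T)$ has order $12$), while for the reduction $\phi_p$ one still has $\phi_p(T) = \diag(\zeta,\zeta^{-1})$ of order $12$ since $p \equiv 5 \pmod{12}$ implies $\zeta$ has order $12$ in $\FF_{p^2}^\times$. Thus $T^3 \in G_p$ but $T^k \notin G_p$ for $k \in \{1,2\}$, so the "level" of $G_p$ in the sense of Wohlfahrt (the lcm of cusp widths, equivalently the smallest $n$ with the normal closure of $T^n$ contained in $G_p$) — here I must be a little careful since $G_p$ need not be normal — is a multiple of $3$ but I need to pin it down. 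The cleanest route: suppose $G_p$ were congruence of level $n$; then $\Gamma(n) \subseteq G_p$, so $[\Gamma : G_p] = 3p$ divides $[\Gamma : \Gamma(n)]$. Now analyze which $n$ can occur: the cusp at $\infty$ has width equal to the smallest positive $k$ with $T^k \in G_p$, which is $3$ (since $\phi_p(T^3) = \diag(\zeta^3, \zeta^{-3})$ has $\zeta^3$ a primitive fourth root of unity — wait, that means $T^3 \in G_p$ indeed, as $u = \zeta^3$ satisfies $u^4 = 1$). So $3 \mid n$. Then I examine $\Gamma(n)\subseteq G_p$: reducing, this would force $\phi_p$ to factor through $\Gamma/\Gamma(n) \cong \prod_{\ell^r \| n}\SL_2(\ZZ/\ell^r\ZZ)$, and since $\phi_p(\Gamma)$ has order $12p^2$ with a $p^2$ present but $p \nmid n$ is forced (as in Lemma~\ref{l:pimage}'s setup, $p$ is coprime to the level by construction), we get a contradiction because the $p$-part of $\abs{\phi_p(\Gamma)}$ cannot be a quotient of $\prod_{\ell \mid n}\SL_2(\ZZ/\ell^r)$ with $p \nmid n$.

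The main obstacle I anticipate is making the last step airtight: precisely, showing that no choice of congruence level $n$ (necessarily coprime to $p$, since $\phi_p$ is the mod-$p$ reduction and $G_p$'s definition only involves congruences away from $p$... but wait — $G_p$ is defined via $\phi_p$, which \emph{is} the reduction mod $p$) can yield $\Gamma(n) \subseteq G_p$. The subtlety is that membership in $G_p$ is tested by a condition mod $p$, so a priori $G_p$ "looks like" it could be congruence of level divisible by $p$. The resolution must be that $\ker\phi_p \supseteq \Gamma'' = \Delta(6)^-$ and $\ker\phi_p \not\supseteq \Gamma(N)$ for any $N$ — i.e. $\ker\phi_p$ itself is non-congruence (cf. the Proposition showing $\Gamma'(N)$ non-congruence for $N>2$, since $\ker\phi_p$ sits between $\Gamma''$ and $\Gamma'$ with $\Gamma'/\ker\phi_p \cong \FF_{p^2}$, which for $p > 2$ is too big for the Wohlfahrt/index bound against $\Gamma(6)$). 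Granting that $\ker\phi_p$ is non-congruence, any group containing it — in particular $G_p$ — that were congruence would, intersected appropriately, still be problematic; more directly, if $\Gamma(n) \subseteq G_p$ then $\Gamma(n) \cap \Gamma' \subseteq G_p \cap \Gamma' = \phi_p^{-1}(U_p^4)$ where $U_p^4$ is the index-$(p^2/p) \cdot$... hmm, the cleanest is: $G_p \cap \Gamma'$ has index $3p/3 = p$ in $\Gamma'$ (using $[\Gamma:\Gamma']=6$, $[\Gamma:G_p]=3p$, and $\Gamma' \not\subseteq G_p$ with $[\Gamma'G_p : G_p]=3$), sits above $\Gamma''$, so by the same Wohlfahrt argument as for $\Gamma'(N)$ it would have to contain $\Gamma(6)$ if congruence, forcing its index $p$ to divide $[\Gamma':\Gamma(6)\cap\Gamma'] = [\Gamma':\Gamma(6)]$, and since $p \equiv 5 \pmod{12}$ has $p \nmid 6$ and $p > 3$, checking that $p \nmid [\Gamma' : \Gamma(6)\cap\Gamma']$ gives the contradiction. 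I would organize the write-up around this reduction to the already-established non-congruence of groups between $\Gamma''$ and $\Gamma'$.
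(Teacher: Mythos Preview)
Your index computation is correct and matches the paper's (implicit) reasoning. For the noncongruence claim, however, you take a substantially different route.

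The paper argues directly: since $S,T^3\in G_p$, the Wohlfahrt level is $3$; hence if $G_p$ were congruence it would contain $\Gamma(3)$. The paper then exhibits the explicit element
\[
M=\twomat{13}{-9}{3}{-2}\in\Gamma(3),\qquad \phi(M)=\twomat{1}{2\zeta^3-2\zeta}{0}{1},
\]
and observes that $\zeta^3-\zeta$ is itself a primitive $12$th root of unity; since $p\equiv 5\pmod{12}$, this upper-right entry does not reduce into $\FF_p$, so $M\notin G_p$. One explicit matrix does all the work.

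Your strategy instead passes to $G_p\cap\Gamma'$, shows it has index $p$ in $\Gamma'$ and contains $\Gamma''$, and then invokes the Wohlfahrt-plus-index argument already used for the groups $\Gamma'(N)$: if congruence it would contain $\Gamma(6)$, forcing $p\mid[\Gamma':\Gamma(6)]$, which fails since $p\geq 5$. This is correct in outline and has the virtue of reusing the machinery developed earlier in the section rather than producing a new explicit element. Two points need tightening: (i) your parenthetical ``$[\Gamma'G_p:G_p]=3$'' is mislabeled --- what you actually need (and what your arithmetic $3p/3=p$ uses) is $[\Gamma:\Gamma'G_p]=3$, whence $[\Gamma'G_p:G_p]=p$ and $[\Gamma':\Gamma'\cap G_p]=p$; and (ii) your earlier detour claiming that $\Gamma(n)\subseteq G_p$ would force $\phi_p$ to factor through $\Gamma/\Gamma(n)$ is invalid (that would need $\Gamma(n)\subseteq\ker\phi_p$, not merely $\Gamma(n)\subseteq G_p$), and you rightly abandon it. Once cleaned up, your reduction works; the paper's proof is simply shorter and self-contained.
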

\begin{proof}
  Since $S,T^3\in G_p$, the Wohlfahrt level is $3$. Therefore, $G_p$ is congruence if and only if $\Gamma(3) \subseteq G_p$. To see that this is not so, notice that
  \begin{align*}
    M&\df\twomat{13}{-9}{3}{-2} = T^4S^{-1}T^{-2}S^{-1}TS^{-1} \in \Gamma(3),\\
    \phi(M) &= \twomat{1}{2\zeta^3-2\zeta}{0}{1}.
  \end{align*}
Since $\zeta^3-\zeta$ is a primitive $12$th root of unity, our hypothesis $p\equiv 5\pmod{12}$ ensures that $\zeta^3-\zeta$ does not reduce mod $p$ to an element of $\FF_p$. We deduce that $M \not \in G_p$, which shows that $\Gamma(3)$ is not contained in $G_p$.
\end{proof}

\begin{lem}
Let $p\equiv 5 \pmod{12}$. The congruence closure of $G_p$ is the unique normal subgroup $\Gamma_{(3)}$ of $\Gamma$ of index $3$.
\end{lem}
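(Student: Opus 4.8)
The plan is to pin down the congruence closure $C$ of $G_p$ by sandwiching it, $G_p \subseteq C \subseteq \Gamma_{(3)}$, and then exploiting that $[\Gamma_{(3)} : G_p]$ is prime. I would begin by recording the relevant facts about $\Gamma_{(3)}$. Since the abelianization of $\Gamma$ is cyclic, it has a unique subgroup of index $3$, so there is a unique normal subgroup $\Gamma_{(3)} \trianglelefteq \Gamma$ of index $3$, namely the kernel of the unique surjection $\Gamma \twoheadrightarrow \ZZ/3\ZZ$. I would then check that $\Gamma_{(3)}$ is a congruence subgroup: as $\PSL_2(\FF_3) \cong A_4$ has the Klein four-group as its unique normal subgroup of index $3$, the preimage of that subgroup in $\Gamma$ is normal of index $3$ and contains $\Gamma(3)$, and by uniqueness it must equal $\Gamma_{(3)}$, so $\Gamma(3) \subseteq \Gamma_{(3)}$.

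Next I would show $G_p \subseteq \Gamma_{(3)}$. By Lemma \ref{l:pimage}, the rule $\stwomat{u}{v}{0}{u^{-1}} \mapsto u^4$ defines a homomorphism from $\phi_p(\Gamma)$ onto the group of cube roots of unity in $\FF_{p^2}^\times$; composing with $\phi_p$ yields a surjection $\Gamma \to \ZZ/3\ZZ$ whose kernel, being normal of index $3$, must equal $\Gamma_{(3)}$. Every $\gamma \in G_p$ has $\phi_p(\gamma) = \stwomat{u}{v}{0}{u^{-1}}$ with $u^4 = 1$ by the definition \eqref{d:Gp}, hence lies in this kernel. Thus $G_p \subseteq \Gamma_{(3)}$; since $\Gamma_{(3)}$ is a congruence subgroup and $C$ is by definition the smallest congruence subgroup containing $G_p$, we obtain $G_p \subseteq C \subseteq \Gamma_{(3)}$. (That $C$ is itself congruence is standard: $G_p$ has finite index, so $C$ is a finite intersection of congruence subgroups.)

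Finally, the preceding Lemma gives $[\Gamma : G_p] = 3p$, and $[\Gamma : \Gamma_{(3)}] = 3$, so $[\Gamma_{(3)} : G_p] = p$. By the tower law $[\Gamma_{(3)} : C]$ divides $p$, hence equals $1$ or $p$; if it equalled $p$ then $[C:G_p]=1$, i.e.\ $C = G_p$, contradicting that $G_p$ is noncongruence (again the preceding Lemma). Therefore $[\Gamma_{(3)}:C] = 1$, that is, $C = \Gamma_{(3)}$. The only genuinely load-bearing observation is that $[\Gamma_{(3)} : G_p]$ is prime, which is what forces the congruence closure all the way up to $\Gamma_{(3)}$; verifying that $\Gamma_{(3)}$ is congruence and that $G_p \subseteq \Gamma_{(3)}$ are both routine, so I do not anticipate a serious obstacle.
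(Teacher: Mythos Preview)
Your proof is correct and follows essentially the same route as the paper: show $G_p \subseteq \Gamma_{(3)}$ via the quotient map killing the $u^4=1$ part (the paper phrases this as $\phi_p^{-1}(L_p)=\Gamma_{(3)}$ for the index-$3$ normal subgroup $L_p\subseteq\phi_p(\Gamma)$), note that $\Gamma_{(3)}$ is congruence, and then use that $[\Gamma_{(3)}:G_p]=p$ is prime together with the noncongruence of $G_p$ to force $C=\Gamma_{(3)}$. You supply a bit more detail on why $\Gamma_{(3)}$ is congruence, but the argument is the same.
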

\begin{proof}
For this, let
\[
  L_p =\left\{ \twomat{u}{v}{0}{u^{-1}}\mid u^4=1, v\in \FF_{p^2}\right\}.
\]
This is a normal subgroup of $\phi_p(\Gamma)$ of index $3$. Therefore, $\phi^{-1}_p(L_p)$ must be the unique normal subgroup $\Gamma_{(3)}$ of $\Gamma$ of index $3$, which is congruence of level $3$. This shows that $G_p \subseteq \Gamma_{(3)}$. Since the index of $G_p$ inside $\Gamma$ is $3p$, and $p$ is prime, we see that the congruence closure of $G_p$ must be $\Gamma_{(3)}$ as claimed.
\end{proof}

We now briefly illustrate how to study modular forms on these intermediate groups by passing to $\Gamma'$ and using the preceding material. Let $\tilde{G}_p = \Gamma' \cap G_p$ then $X(\tilde{G}_p)$ is a genus $1$ elliptic curve with a $p$-isogeny to $X(\Gamma')$. So $X(\tilde{G}_p)$ has $p$ cusps, the kernel of the isogeny, and no elliptic points.
The group $\tilde{G}_p$ is index $2$ in $G_p$. The group $G_p/\tilde{G}_p$ acts on $X(\tilde{G}_p)$ by the hyper-elliptic involution and hence $X(G_p)$ is the quotient by this involution.

We conclude that $X(G_p)$ has $(p+1)/2$ cusps, the origin being the only $p$-torsion point fixed by the involution. Additionally, $X(G_p)$ has $3$ elliptic points of order $2$, these are the images of the non-trivial $2$-torsion points on $X(\tilde{G}_p)$. Finally, $X(G_p)$ has no elliptic points of order $3$.
The Hurwitz genus formula allows us to see that the genus is $0$.

The dimension formula for the spaces of modular forms of weight $2k$ on $X(\tilde{G}_p)$ is $kp$, whereas on $X(G_p)$ it is
\[ kp/2 + 1 - \begin{cases} \frac{3}{2} & 2 \nmid k  \\ 0 & \text{otherwise}. \end{cases} \]
Comparing with the dimension formula for $X(\Gamma_{(3)})$, where recall that $\Gamma_{(3)}$ is the unique normal subgroup of $\Gamma$ of index $3$,
\[ k/2 + 1 - \begin{cases} \frac{3}{2} & 2 \nmid k  \\ 0 & \text{otherwise} \end{cases} \]
we see that the dimension for the space of forms on $X(G_p)$ is essentially half of that on $X(\tilde{G}_p)$ 
with rounding issues relating to forms on $X(\tilde{G}_p)$ with trivial character, that is, forms in the subring $\CC[\eta^8,E_4,E_6]$. As the modular forms on $X(G_p)$ are those on $X(\tilde{G}_p)$ that are fixed by the hyperelliptic involution, we have in the notation of Proposition \ref{formofform}, and either using Proposition \ref{prop:incluofspace}, or adapting to $X(\tilde{G}_p)$ rather than $X(\Gamma'(p))$, the forms on $X(G_p)$ are of the form
\[ P(\tilde{x}) \left(\frac{\eta^4}{\Psi_p(\tilde{x},\tilde{y})} \right)^k.\]
Finally, noting that the hyperelliptic involution acts on characters by $\chi\mapsto \chi^{-1}$,
a natural basis for forms on $X(G_p)$ comes from adding together forms from $M_k(\tilde{G}_p,\chi)$ and $M_k(\tilde{G}_p,\chi^{-1})$ for each character $\chi$ of $\Gamma'/\tilde{G}_p$.

Though we focused on the specific case of $G_p$ above, the analysis is similar if the image of $G$ in $\Gamma/\Gamma'$ has order $2$ and $\Gamma'(N) \subset G$ with $N$ odd (when $N$ is even the group $G$ need not include the hyperelliptic involution). When the image has elements of order $3$, there is also a quotient by a degree three automorphism of the curve. In this case the analysis is simpler if $3\nmid n$, in which case the fixed points will be three of the $3$-torsion points on the elliptic curve, which leads to two elliptic points.

We conclude this section by illustrating in Figure \ref{f:dessin} the Belyi map corresponding to the smallest genus zero example $G_p$ for $p=5$, which is of index $15$ inside $\Gamma$. Descriptions on how to illustrate Belyi maps via dessins d'enfants are found in a variety of sources, such as \cite{JonesWolfart}.

\begin{figure}
\begin{tikzpicture}
\tikzset{
    bl/.style={circle,minimum size=0.1cm,fill=black,draw=black},
    wh/.style={circle,minimum size=0.1cm,fill=white, draw=black}}

\node[bl] (1) at (-5,0) {};
\node[wh] (4) at (-4,0) {};
\node[wh] (2) at (-3.5,1.5) {};
\node[wh] (3) at (-3.5,-1.5) {};

\draw (1)  to [out=90,in=180,looseness=1]  (2);
\draw (1) -- (4);
\draw (1) to  [out=270,in=180,looseness=1] (3);
\node[bl] (5)  at (-2,0) {};
\draw (2) to  [out=0,in=90,looseness=1] (5);
\draw (3) to  [out=0,in=270,looseness=1] (5);

\node[wh] (6) at (0,0) {};
\draw (5) -- (6);
\node[bl] (7)  at (2,0) {};
\draw (6) -- (7);

\node[wh] (8)   at (3.5,1.5) {};
\node[wh] (9)  at (3.5,-1.5) {};

\draw (7) to  [out=90,in=180,looseness=1] (8);
\draw (7) to  [out=270,in=180,looseness=1] (9);

\node[bl] (10)   at (5,0) {};
\draw (10)  to [out=90,in=0,looseness=1]  (8);
\draw (10)  to [out=270,in=0,looseness=1]  (9);

\node[wh] (11)   at (4.25,0) {};
\draw (10) -- (11);
\node[bl] (12)   at (3.5,0) {};
\draw (11) -- (12);

\node[wh] (13)   at (2.75,0.5) {};
\node[wh] (14)   at (2.75,-0.5) {};

\draw (12) -- (13);
\draw (12) -- (14);
 
\end{tikzpicture}
\caption{The dessin for $G_5$ with index $[\Gamma: G_5]=15$.}
\label{f:dessin}
\end{figure}
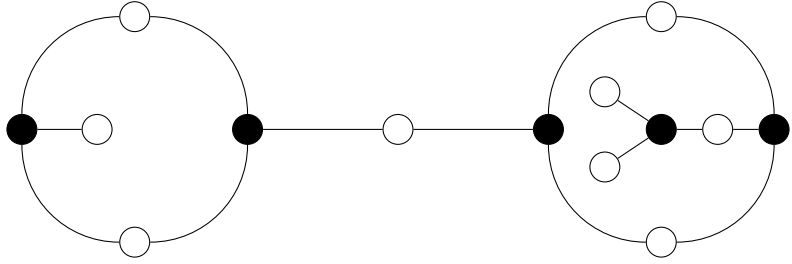

\section{Some symplectic families}
\label{s:symplectic}
The results of \cite{LS} imply that surjective homomorphisms $\Gamma \to \Sp_4(\FF_q)$ are somewhat rarer than for other classical groups. In this section we study families of subgroups arising from such homomorphisms obtained via the moduli results of \cite{TW} on irreducible representations of $\Gamma$ of rank $4$. Throughout this section we take $\Gamma = \SL_2(\ZZ)$, and we write $\rho$ in place of $\phi$ for our homomorphism, to distinguish this case from the previous one.

\subsection{Moduli of representations of rank $4$}
In \cite{TW}, equations for the moduli space of four-dimensional irreducible representations of $\Gamma$ were determined. The corresponding vector-valued modular forms were studied in \cite{FM}. Our goal now is to study which representations in this moduli space admit a symplectic structure. Let us recall from Proposition 2.6 of \cite{TW} that the moduli space of irreducible representations of $\Gamma$ has irreducible components determined by matrices
\begin{align*}
\rho(T) &= \left(\begin{matrix}
x&(1+D^{-1}+D^{-2})y&(1+D^{-1}+D^{-2})z&w\\
0&y&(1+D^{-1})z&w\\
0&0&z&w\\
0&0&0&w
\end{matrix}\right),\\
\rho(S) &= \left(\begin{matrix}
0&0&0&-(x^2yD^3)^{-1}\\
0&0&D(y^2z)^{-1}&0\\
0&-D^2(yz^2)^{-1}&0&0\\
(xw^2)^{-1}&&&
\end{matrix}\right),
\end{align*}
for nonzero complex numbers $x,y,z,w$, and where $D = \sqrt{yz/xw}$ denotes a choice of square-root. Moreover, the Corollary above 2.10 of \cite{TW} shows that necessarily $(xyzw)^3=1$, so that the space of irreducible representations has $6$ irreducible components, corresponding to the third root of unity $xyzw$ and the choice of sign in $D$. See Section 2.10 of \cite{TW} for polynomial conditions that the eigenvalues of $\rho(T)$ must satisfy in order to ensure that $\rho$ is irreducible -- since we will be interested in generic specializations of $\rho$, we will not be overly concerned with these conditions. Notice that $\rho$ is defined over a number field if and only if the eigenvalues of $\rho(T)$ are algebraic.

In order to simplify some of the commutative algebra to follow, we write $x = X^2$, $y=Y^2$, $z=Z^2$, $w=W^2$, so that $D= \pm \tfrac{YZ}{XW}$. Since we do not intend to be exhaustive below, we consider only the case $D =\tfrac{YZ}{XW}$ and omit discussion of the case $D=-\tfrac{YZ}{XW}$.

To begin, in terms of our new variables, we have
\begin{align*}
\rho(T) &= \left(\begin{array}{rrrr}
X^{2} & \frac{Y^{2} Z^{2} + X Y Z W + X^{2} W^{2}}{Z^{2}} & \frac{Y^{2} Z^{2} + X Y Z W + X^{2} W^{2}}{Y^{2}} & W^{2} \\
0 & Y^{2} & \frac{Y Z^{2} + X Z W}{Y} & W^{2} \\
0 & 0 & Z^{2} & W^{2} \\
0 & 0 & 0 & W^{2}
                 \end{array}\right),\\
  \rho(S) &= \left(\begin{array}{rrrr}
0 & 0 & 0 & \frac{-W}{X Y^{3} Z^{3}} \\
0 & 0 & \frac{1}{X Y^{3} Z W} & 0 \\
0 & \frac{-1}{X^{2} Z^{2} W^{2}} & 0 & 0 \\
\frac{1}{X^{2} W^{4}} & 0 & 0 & 0
\end{array}\right),
\end{align*}
where $(XYZW)^6=1$. Initially we look for an antisymmetric matrix
\[
  J = \left(\begin{matrix}
      0 & A & B & C\\
      -A & 0 & D & E\\
      -B & -D & 0 &F\\
      -C & -E & -F & 0
    \end{matrix}
  \right)
\]
such that $\rho(T)^tJ\rho(T) = J$ and $\rho(S)^tJ\rho(S)=J$. Using that $XYZW\neq 0$, this already reduces us to considering a matrix of the form:
\begin{align*}
J=&\left(\begin{array}{rrrr}
0 & \frac{-X^{2} W^{2} + 1}{X^{2} W^{2}}&0 & 1 \\
\frac{X^{2} W^{2} - 1}{X^{2} W^{2}}& 0 & \frac{-X Y^{2} Z^{2} W - Y Z - X W}{X Y^{2} Z^{2} W}& 0 \\
0 & \frac{X Y^{2} Z^{2} W + Y Z + X W}{X Y^{2} Z^{2} W}& 0 & -X^{4} Z^{2} W^{6} + X^{2} Z^{2} W^{4}\\
-1 & 0 & X^{4} Z^{2} W^{6} - X^{2} Z^{2} W^{4} & 0
         \end{array}\right)C+\\
  &\left(\begin{array}{rrrr}
0 & \frac{1}{X^{3} Y^{3} Z W^{5}}& \frac{-1}{X^{3} Y^{3} Z W^{5}}& 0\\
\frac{-1}{X^{3} Y^{3} Z W^{5}}& 0 &\frac{-X^{3} Y^{3} Z W^{5} + X^{3} Y Z W^{3} + 1}{X^{3} Y^{3} Z W^{5}}& 1 \\
\frac{1}{X^{3} Y^{3} Z W^{5}}& \frac{X^{3} Y^{3} Z W^{5} - X^{3} Y Z W^{3} - 1}{X^{3} Y^{3} Z W^{5}} & 0 & \frac{X Z W}{Y^{3}}\\
0& -1 & \frac{-X Z W}{Y^{3}} & 0
\end{array}\right)E
\end{align*}

Again, since we do not intend to be exhaustive, we shall concentrate on the case $E=0$. In this case, we can extract some polynomial equations that encode the condition that $J$ defines a symplectic form for $\rho$. These equations describe a variety with eight components, four that are one-dimensional, and four that are two-dimensional. The two-dimensional components are given by the ideals:
\begin{align*}
  I_1 &= (XW+1,YZ+1),\\
  I_2 &= (XW+1,Y^2Z^2-YZ+1),\\
  I_3 &= (XW-1,Y^2Z^2+YZ+1),\\
  I_4 &= (XW-1,YZ-1).
\end{align*}

In what follows we consider only the case $I_4$, where $E=0$, and we set $C=1$, which we are free to do since we may rescale $J$. Under these hypotheses, we can write things easily in terms of our original variables and we find that we are considering the following surface inside of the Tuba-Wenzl moduli space:
\begin{align*}
\rho(T) =& \left(\begin{array}{rrrr}
x & 3 y & 3y^{-1} & x^{-1} \\
0 & y& 2y^{-1} & x^{-1} \\
0 & 0 & y^{-1} & x^{-1} \\
0 & 0 & 0 & x^{-1}
                 \end{array}\right), &\rho(S) =& \left(\begin{array}{rrrr}
0 & 0 & 0 & -x^{-1} \\
0 & 0 & y^{-1} & 0 \\
0 & -y & 0 & 0 \\
x & 0 & 0 & 0
\end{array}\right).
\end{align*}
This family of representations preserves the symplectic form
\[
  J = \left(\begin{array}{rrrr}
0 & 0 & 0 & 1 \\
0 & 0 & -3 & 0 \\
0 & 3 & 0 & 0 \\
-1 & 0 & 0 & 0
\end{array}\right).
\]

\subsection{Combinatorics of some $\Sp_4$ families}
\label{ss:grassmanian}

We now have a generous supply of homomorphisms
\[
\rho \colon \Gamma \to \Sp_4(\QQ).
\]
Reducing modulo $p$ for all but finitely many primes $p$ --- we must avoid $3$ where $J$ has bad reducibility properties, as well as the divisors of $xy$ --- we obtain homomorphisms
\[
  \rho_p \colon \Gamma \to \Sp_4(\FF_p).
\]

As we will see in Theorem \ref{t:sp4surject}, by choosing $x$ and $y$ carefully, this homomorphism is surjective for (conjecturally\footnote{Obtaining surjectivity for infinitely many primes using our approach depends on the validity of Artin's primitive root conjecture, which is currently only known in full generality as a consequence of the Riemann hypothesis for Dirichlet $L$-functions.}) infinitely many primes $p$. We thereby obtain noncongruence subgroups $\ker \rho_p$ of $\Gamma$ of finite index:
\[
  [\Gamma \colon \ker\rho_p] = \abs{\Sp_4(\FF_p)} = p^{4}(p^4-1)(p^2-1),
\] which grows like $O(p^{10})$. This index is too large and poses a problem for explicit study. To circumvent this problem, we compose $\rho_p$ with a primitive action of $\Sp_4(\FF_p)$ on the set of Lagrangian subspaces in $\FF_p^4$, which will pick out a maximal noncongruence subgroup of $\Gamma$ as a point stabilizer. These symplectic point stabilizers are the analogues in our setting of the classical congruence groups $\Gamma_0(p)$.

Assume $p \geq 5$ (later we will require $p\geq 11$) and let our symplectic form be given by $J$ as above. Let $X$ denote the corresponding symplectic Grassmanian. If $p > 3$ is prime, then either $3$ or $-3$ is a quadratic residue mod $p$. Using this, one can show that $X$ is isomorphic mod $p$ to the usual symplectic Grassmanian, and so
\[
  \abs{X(\FF_p)} = (p^2+1)(p+1),
\]
which grows instead like $O(p^3)$. Point stabilizers under this action are known to be isomorphic with $\GL_2(\FF_p)$ semi-direct producted with the additive group $\Sym_2(\FF_p)$ of symmetric $2\times 2$-matrices. An explicit computation with the symplectic form defined by $J$ gives:
\[
  X = \left\{\left \langle\left(\begin{matrix}
          1\\
          0\\
          a\\
          b
        \end{matrix}
      \right),\left(\begin{matrix}
          0\\
          1\\
          c\\
          -3a
        \end{matrix}\right) \right\rangle \right\}\cup \left\{\left \langle\left(\begin{matrix}
          1\\
          a\\
          0\\
          b
        \end{matrix}
      \right),\left(\begin{matrix}
          0\\
          0\\
          1\\
          3a
        \end{matrix}\right) \right\rangle \right\}\cup\left\{\left \langle\left(\begin{matrix}
          0\\
          1\\
          a\\
          0
        \end{matrix}
      \right),\left(\begin{matrix}
          0\\
          0\\
          0\\
          1
        \end{matrix}\right) \right\rangle \right\}\cup \left\{\left \langle\left(\begin{matrix}
          0\\
          0\\
          1\\
          0
        \end{matrix}
      \right),\left(\begin{matrix}
          0\\
          0\\
          0\\
          1
        \end{matrix}\right) \right\rangle \right\}
\]
Let $A(a,b,c)$, $B(a,b)$, $C(a)$ and $D$ denote these Lagrangian subspaces.  We wish to describe how $\rho(S), \rho(R)$ and $\rho(T)$ act on $X$.

\subsubsection{Action of $S$} In this case one easily checks that
\[
  \rho(S)A(a,b,c) = \begin{cases}
    A\left(-\frac{axy}{3a^2+bc},-\frac{cx^2}{3a^2+bc},-\frac{by^2}{3a^2+bc}\right) & 3a^2+bc \neq 0,\\
    B\left(-\frac{ax}{by},-\frac{x^2}{b}\right)& b\neq 0,~ c=-\frac{3a^2}{b},\\
    C\left(-\frac{y^2}{c}\right) & a=b=0,~ c\neq 0,\\
    D& a=b=c=0.
  \end{cases}
\]
Similarly one easily checks:
\[
  \rho(S)B(a,b) =\begin{cases}
    A\left(\frac{axy}{b},-\frac{x^2}{b},\frac{3a^2y^2}{b}\right) & b\neq 0,\\
    B\left(-\frac{x}{3ay},0\right) & a\neq 0,~ b=0,\\
    C(0) & a=b=0.
  \end{cases}
\]
and finally:
\[
  \rho(S)C(a) =\begin{cases} 
    A\left(0,0,-\frac{y^2}{a}\right) & a\neq 0,\\
    B(0,0) & a=0,\\
  \end{cases}
\]
and $\rho(S)D=A(0,0,0)$.

\begin{lem}
  \label{l:sgrassmanian}
Let $\veps_2$ denote the number of fixed points of $\rho(S)$ in its action on the symplectic Grassmanian $X(\FF_p)$. Then
\[
  \veps_2 =  p+2+\left(\frac{-1}{p}\right)=\begin{cases}
p+3 & p\equiv 1 \pmod{4},\\
p+1 & p\equiv 3\pmod{4}.  
\end{cases}
\]
In particular, $\veps_2$ is independent of $x$ and $y$.
\end{lem}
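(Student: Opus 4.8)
The plan is to carry out a direct fixed-point count, organized by the four families $A(a,b,c)$, $B(a,b)$, $C(a)$, $D$ that exhaust $X(\FF_p)$ (they are pairwise disjoint, since their parameter counts already total $p^3+p^2+p+1=\abs{X(\FF_p)}$), using the explicit formulas for the action of $\rho(S)$ recorded above. The observation that makes this tractable is that $\rho(S)$ carries most strata off themselves: $\rho(S)D$ lies in the $A$-stratum, $\rho(S)C(a)$ lies in the $A$- or $B$-stratum, and $\rho(S)B(a,b)$ lies in the $A$-stratum unless $b=0$. Hence $D$ and every $C(a)$ contribute no fixed points, and the only candidates in the $B$-stratum are the points $B(a,0)$ with $a\neq 0$.

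For the $B$-stratum I would solve $\rho(S)B(a,0)=B(a,0)$, which by the formula reduces to $a=-x/(3ay)$, i.e.\ $3a^2y=-x$. Since $-x/(3y)$ is a nonzero residue, this equation has $1+\left(\frac{-3xy}{p}\right)$ solutions, all necessarily with $a\neq 0$; here I use $\left(\frac{-x/(3y)}{p}\right)=\left(\frac{-3xy}{p}\right)$. So the $B$-stratum contributes exactly $1+\left(\frac{-3xy}{p}\right)$ fixed points.

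The $A$-stratum is the main computation. A point $A(a,b,c)$ with $3a^2+bc=0$ is carried into the $B$-, $C$- or $D$-stratum, so it is never fixed; assume $\delta:=3a^2+bc\neq 0$. Then the fixed-point condition becomes the system $a\delta=-axy$, $b\delta=-cx^2$, $c\delta=-by^2$, and the first equation forces $a=0$ or $\delta=-xy$. If $a=0$ then $\delta=bc$, the other two equations give $b^2=-x^2$ and $c^2=-y^2$ — these automatically have $b,c$ nonzero, so $\delta\neq 0$ holds automatically — and there are $(1+\left(\frac{-1}{p}\right))^2$ such triples. If instead $a\neq 0$ and $\delta=-xy$, the last two equations collapse to the single relation $c=by/x$, and the equation $\delta=-xy$ becomes the affine conic $3x\,a^2+y\,b^2=-x^2y$ in $(a,b)$; by the standard count for a nondegenerate diagonal conic $\alpha u^2+\beta v^2=\gamma$ with $\alpha\beta\gamma\neq 0$, which has $p-\left(\frac{-\alpha\beta}{p}\right)$ points, this conic has $p-\left(\frac{-3xy}{p}\right)$ points, from which I subtract its $1+\left(\frac{-1}{p}\right)$ solutions with $a=0$. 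Adding the two cases, the $A$-stratum contributes $(1+\left(\frac{-1}{p}\right))^2 + p - \left(\frac{-3xy}{p}\right) - 1 - \left(\frac{-1}{p}\right) = p + 1 + \left(\frac{-1}{p}\right) - \left(\frac{-3xy}{p}\right)$, using $\left(\frac{-1}{p}\right)^2=1$.

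Summing the contributions of all four strata gives $\veps_2 = p + 1 + \left(\frac{-1}{p}\right) - \left(\frac{-3xy}{p}\right) + 1 + \left(\frac{-3xy}{p}\right) = p + 2 + \left(\frac{-1}{p}\right)$; the $\left(\frac{-3xy}{p}\right)$ terms cancel, which is exactly why the answer is independent of $x$ and $y$. I expect the only delicate point to be the bookkeeping inside the $A$-stratum: keeping the subcases $a=0$ and $\delta=-xy$ disjoint and subtracting the right overlap from the conic count, together with correctly invoking the point count for the binary conic. A useful consistency check throughout is that $\rho(S)^2=\rho(-I)$ is a scalar matrix and hence acts trivially on $X$, so $\rho(S)$ is an involution on $X(\FF_p)$ and $\veps_2$ must have the same parity as $\abs{X(\FF_p)}=(p^2+1)(p+1)$, which it does since $p$ is odd.
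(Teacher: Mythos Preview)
Your proof is correct and follows essentially the same approach as the paper: both use the explicit action formulas to reduce to the $B(a,0)$ and $A(a,b,c)$ strata, obtain the same contribution $1+\left(\frac{-3xy}{p}\right)$ from the $B$-stratum and $2+2\left(\frac{-1}{p}\right)$ from the $a=0$ case of the $A$-stratum, and then handle the $a\neq 0$ case of the $A$-stratum via the same relation $c=by/x$ (equivalently $b=cx/y$). The only cosmetic difference is that you invoke the standard point count for a nondegenerate binary conic, whereas the paper parameterizes by $c$ and evaluates the character sum $\sum_c \left(\frac{y^2+c^2}{p}\right)$ directly; these are equivalent computations.
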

\begin{proof}
By the computation above, the fixed points can only possibly come from $B(a,0)$ with $a\neq 0$ or $A(a,b,c)$ with $3a^2+bc \neq 0$. Notice that $B(a,0)$ is fixed by $\rho(S)$ if and only if $a^2 \equiv -\frac{x}{3y}\pmod{p}$. If this is true then we obtain two elliptic points from these values of $a$. We can write this uniformly in terms of the Legendre symbol as a contribution of $1+\left(\frac{-3xy}{p}\right)$ fixed points.

Now we consider $A(a,b,c)$ for $3a^2+bc\neq 0$. If $a\neq 0$, then $A(a,b,c)$ is fixed if $xy=-3a^2-bc$ and $b=cx/y$. This means, in particular, that $a^2 = -\frac{x(y^2+c^2)}{3y}$. Therefore, in this case we obtain a contribution of:
\begin{align*}
  \sum_{\substack{c=1\\ c^2\not \equiv -y^2\pmod{p}}}^{p}\left(1+\left(\frac{-3xy(y^2+c^2)}{p}\right)\right)=& p-\left(1+\left(\frac{-1}{p}\right)\right)+\left(\frac{-3xy}{p}\right)\sum_{c=1}^{p}\left(\frac{y^2+c^2}{p}\right)\\
  =&p-1-\left(\frac{-1}{p}\right)-\left(\frac{-3xy}{p}\right)
\end{align*}
fixed points.

If $a=0$, then $A(a,b,c)$ is fixed if $b^2=-x^2$ and $c^2=-y^2$. So if $p\equiv 1 \pmod{4}$ this contributes $4$ fixed points, while if $p\equiv 3 \pmod{4}$ it contributes none. This can be written uniformly as $2+2\left(\frac{-1}{p}\right)$. This concludes the proof.
\end{proof}

\subsubsection{Action of $R$} In this case one easily checks that
\begin{align*}
 & \rho(R)A(a,b,c)=\\
 & \begin{cases}
    A\left(-\frac{xy(axy+by^2+3a^2+bc)}{3a^2+bc},\frac{x^2(6axy+3by^2+6a^2+2bc-cx^2)}{3a^2+bc},-\frac{y^2(by^2+6a^2+2bc)}{3a^2+bc}\right) & 3a^2+bc \neq 0,\\
    B\left(-\frac{x(ax+by)}{by^2},\frac{x^2(6a^2x^2+6abxy+2b^2y^2-bx^2y^2)}{b^2y^2}\right)& b\neq 0,~ c=-\frac{3a^2}{b},\\
    C\left(-\frac{y^4+2cy^2}{c}\right) & a=b=0,~ c\neq 0,\\
    D& a=b=c=0.
  \end{cases}
\end{align*}
Similarly one easily checks:
\[
  \rho(R)B(a,b) =\begin{cases}
    A\left(\frac{xy(3a^2y^2+axy-b)}{b},\frac{x^2(2b-9a^2y^2-6axy-x^2)}{b},\frac{y^2(3a^2y^2-2b)}{b}\right) & b\neq 0,\\
    B\left(-\frac{x(3ay+x)}{3ay^2},\frac{x^2(18a^2y^2+18axy+6x^2)}{9a^2y^2}\right) & a\neq 0,~ b=0,\\
    C(-2y^2) & a=b=0,
  \end{cases}
\]
and finally:
\[
  \rho(R)C(a) =\begin{cases} 
    A\left(-\frac{xy(y^2+a)}{a},\frac{x^2(3y^2+2a)}{a},-\frac{y^2(y^2+2a)}{a}\right) & a\neq 0,\\
    B\left(-\frac{x}{y},2x^2\right) & a=0,
  \end{cases}
\]
and $\rho(R)D=A(-xy,2x^2,-2y^2)$.

One could use these explicit formulas to work out the number of elliptic points of order $3$ as in the case of elliptic points of order $2$, but it is somewhat messier than in the preceding case. Therefore we defer this computation until Lemma \ref{l:rgrassmanian} below, when we are prepared to give a simpler representation theoretic proof.

\subsubsection{Action of $T$} Recall that the cusps of the point stabilizers correspond to the cycles in the action of $T$ on $X(\FF_p)$. However, unlike with $S$ and $R$, the order and cycle types of $T$  depend on $x$ and $y$, and the explicit calculations are not as easy or insightful. However, we can still extract useful information   as follows.  Let $\chi$ be the character of the permutation representation above, so that $\chi(g)$ is the number of fixed points of $\rho(g)$ acting on $X(\FF_p)$. The  number of cusps of width $n$ equals the number cycles of length $n$ in the cycle structure of $\rho(T^n)$. In particular, $c_1=\chi(T)$, and by induction for $n>1$, we get the formula 
\[
  c_n = \frac{1}{n}\left(\chi(T^n)-\sum_{\substack{d\mid n\\ d < n}}dc_d\right).
\]
Equivalently,  $\chi(T^n)=\sum_{d\mid n}dc_d$. Performing Mobius inversion yields
\[
  c_n = \frac{1}{n}\sum_{d\mid n} \mu(n/d)\chi(T^d).
\]
The following result is well-known, but we include the proof for completeness.
\begin{lem}
\label{l:cusps}
Let $c$ denote the number of cusps, and let $N$ denote the order of the mod $p$ reduction of $\rho(T)$. Then
  \[
  c =\frac{1}{N}\sum_{d\mid N}\varphi(N/d)\chi(T^d),
\]
where $\varphi$ denotes Euler's totient function.
\end{lem}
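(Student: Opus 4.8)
The plan is to obtain the closed form by summing the per-width cusp counts and rearranging with the Möbius inversion already derived. By the dictionary between cusps and $T$-orbits recalled just above, a cusp of width $n$ is the same thing as an orbit of $\langle \rho_p(T)\rangle$ on $X(\FF_p)$ of size $n$; since $\rho_p(T)$ has order $N$, every such orbit size divides $N$, so $c=\sum_{n\mid N}c_n$. Into this I would substitute the formula $c_n=\frac1n\sum_{d\mid n}\mu(n/d)\chi(T^d)$ and interchange the order of summation. For a fixed divisor $d$ of $N$, the divisors $n$ of $N$ with $d\mid n$ are exactly $n=de$ with $e\mid N/d$, which gives
\[
c=\sum_{d\mid N}\chi(T^d)\sum_{e\mid N/d}\frac{\mu(e)}{de}=\sum_{d\mid N}\frac{\chi(T^d)}{d}\sum_{e\mid N/d}\frac{\mu(e)}{e}.
\]

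The inner sum is the elementary identity $\sum_{e\mid m}\mu(e)/e=\prod_{\ell\mid m}\bigl(1-\ell^{-1}\bigr)=\varphi(m)/m$, applied with $m=N/d$. Plugging this in and simplifying $\frac{\chi(T^d)}{d}\cdot\frac{\varphi(N/d)}{N/d}=\frac{\varphi(N/d)\chi(T^d)}{N}$ yields
\[
c=\frac1N\sum_{d\mid N}\varphi(N/d)\chi(T^d),
\]
which is the claim. This is the route I would write up in detail, since the Möbius inversion it continues has already been set up in the text.

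Alternatively — and more transparently — one can invoke Burnside's orbit-counting lemma directly: the cyclic group $C=\langle\rho_p(T)\rangle$ of order $N$ acts on $X(\FF_p)$, its orbits are exactly the cusps, so $c=\frac1N\sum_{g\in C}\abs{\mathrm{Fix}(g)}$. For $g=\rho_p(T)^j$ one has $\langle g\rangle=\langle\rho_p(T)^{\gcd(j,N)}\rangle$, and a point fixed by $g$ is fixed by the whole subgroup it generates, so $\abs{\mathrm{Fix}(\rho_p(T)^j)}=\chi(T^{\gcd(j,N)})$; grouping the indices $j\in\{0,\dots,N-1\}$ according to the value $d=\gcd(j,N)$, of which there are exactly $\varphi(N/d)$ for each $d\mid N$, recovers the displayed identity. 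I do not expect a genuine obstacle in either version: the only points needing care are the identification of cusps with $\langle\rho_p(T)\rangle$-orbits on $X(\FF_p)$ (standard, via orbit–stabilizer for the transitive $\Sp_4(\FF_p)$-action on $X(\FF_p)$ together with the usual cusp–coset correspondence) and the observation that $\rho_p(T)^j$ and $\rho_p(T)^{\gcd(j,N)}$ have the same fixed-point set; everything else is bookkeeping.
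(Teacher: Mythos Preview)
Your primary argument is correct and mirrors the paper's proof essentially line for line: substitute the M\"obius-inverted expression for $c_n$ into $c=\sum_{n\mid N}c_n$, swap the order of summation via $n=de$, and evaluate the inner sum using $\sum_{e\mid m}\mu(e)/e=\varphi(m)/m$. Your alternative via Burnside's lemma is a pleasant bonus that the paper does not include, but the main route is the same.
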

\begin{proof}
  Since $N$ is the least common multiple of the cycle lengths of $\rho(T)$ acting on $X$, and these cycle lengths are the widths of the cusps, we have $c=\sum_{n \mid N} c_n$ and therefore
  \begin{align*}
    c=&\sum_{n\mid N}\frac{1}{n}\sum_{d\mid n}\mu(n/d)\chi(T^d)\\
    =&\sum_{d\mid N}\sum_{n\mid (N/d)}\frac{1}{dn}\mu(n)\chi(T^d)\\
    =&\sum_{d\mid N}\frac{1}{d}\chi(T^d)\sum_{n\mid (N/d)}\frac{\mu(n)}{n}\\
    =&\frac{1}{N}\sum_{d\mid N}\varphi(N/d)\chi(T^d)
  \end{align*}
\end{proof}

Thus, to compute the number of cusps, and therefore also the genus, it remains to understand the order of $T$ mod $p$, as well as the character $\chi$. We start this by imposing constraints on $p$, $x$ and $y$, which forces $\rho_p$ to be surjective, as follows.

\begin{thm}
\label{t:sp4surject}
  Suppose that
  \begin{enumerate}
  \item $p > 7$;
  \item $x$ reduces mod $p$ to a generator of $\FF_p^{\times}$;
  \item $x\equiv y^{-1} \pmod{p}$.
  \end{enumerate}
Then $\rho_p(T)$ has order $p(p-1)$, and $\langle \rho_p(T),\rho_p(S)\rangle = \Sp_4(\FF_p)$.
\end{thm}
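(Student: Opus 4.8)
\emph{The order of $\rho_p(T)$.} Hypothesis (3) forces $y\equiv x^{-1}\pmod p$, so $\rho_p(T)$ is the explicit upper-triangular matrix with diagonal $(x,x^{-1},x,x^{-1})$ and characteristic polynomial $(t-x)^2(t-x^{-1})^2$. Since $x$ reduces to a generator of $\FF_p^\times$ and $p>5$ we have $x^2\not\equiv\pm 1$; in particular $x\ne x^{-1}$. Write the Jordan--Chevalley decomposition $\rho_p(T)=su=us$ with $s$ semisimple and $u$ unipotent. Then $\mathrm{ord}(s)=\lcm(\mathrm{ord}(x),\mathrm{ord}(x^{-1}))=p-1$ by hypothesis (2). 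To identify $u$ I would compute $\ker(\rho_p(T)-xI)$ directly from the matrix; a short calculation using $x^2\not\equiv\pm1$ shows this space is one-dimensional, so the generalized $x$-eigenspace $E_x$ (and likewise $E_{x^{-1}}$) is a single Jordan block of size $2$. Hence $u\ne I$ and $(u-I)^2=0$, so $\mathrm{ord}(u)=p$; as $s$ and $u$ commute with coprime orders, $\mathrm{ord}(\rho_p(T))=p(p-1)$.

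\emph{Setting up the generation statement.} Put $H=\langle\rho_p(S),\rho_p(T)\rangle\le\Sp_4(\FF_p)$. A direct check gives $\rho_p(S)^2=-I$, so $-I\in H$ and $H$ is the full preimage in $\Sp_4(\FF_p)$ of its image $\bar H\le\PSp_4(\FF_p)$; thus it suffices to prove $\bar H=\PSp_4(\FF_p)$, equivalently that $\bar H$ lies in no maximal subgroup. I would invoke King's classification \cite{King} of the maximal subgroups of $\PSp_4(p)$, sorted by the Aschbacher classes $\cC_1,\dots,\cC_8$ (for $p$ prime only $\cC_1,\cC_2,\cC_3$, and possibly $\cC_6$, are nonempty for $\Sp_4$) together with the almost simple class $\cS$. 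The two key inputs are: from the order computation, $p\mid |H|$ and $H$ contains an element of order $p(p-1)$; and $H$ acts irreducibly on $\FF_p^4$ --- this last fact I would extract from the Tuba--Wenzl matrices by listing the (finitely many) $\rho_p(T)$-invariant subspaces and checking that none of them other than $0$ and $\FF_p^4$ is also $\rho_p(S)$-invariant.

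\emph{Eliminating the maximal subgroups.} The element of order $p(p-1)$ together with $p\mid |H|$ disposes of: class $\cC_6$ (the extraspecial-type normaliser has order prime to $p$); class $\cC_3$ (for $p>5$ neither $\Sp_2(p^2).2$ nor its unitary counterpart has an element of order $p(p-1)$); and --- once $p>7$, so that $p\nmid |A_7|$ and the other relevant small orders --- the entire class $\cS$, whose members either have order prime to $p$ or (like $\PSL_2(p)$, or $\PGL_2(p)$, via $\Sym^3$) have all element orders at most $p+1<p(p-1)$. Irreducibility of $H$ rules out the maximal parabolics $P_1,P_2$ and all reducible $\cC_1$ subgroups. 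There remains the imprimitive class $\cC_2$ --- essentially $\Sp_2(p)\wr S_2$, and any stabiliser of a decomposition into two $2$-dimensional subspaces (complementary Lagrangians), or of a frame of four lines. If $H$ preserved a decomposition $\FF_p^4=V_1\oplus V_2$ into $2$-dimensional pieces, then each $V_i$ is $\rho_p(T)^2$-invariant (whether or not $\rho_p(T)$ itself fixes the $V_i$); since $\rho_p(T)^2$ is non-semisimple and, because $x$ has order $p-1>4$, has the same generalized eigenspaces $E_x,E_{x^{-1}}$ as $\rho_p(T)$, its $2$-dimensional invariant subspaces are exactly $E_x$, $E_{x^{-1}}$, and $L_x\oplus L_{x^{-1}}$ (where $L_x,L_{x^{-1}}$ are the genuine eigenlines), of which the only complementary pair is $\{E_x,E_{x^{-1}}\}$; so $\{V_1,V_2\}=\{E_x,E_{x^{-1}}\}$, and as $\rho_p(T)$ preserves each of $E_x,E_{x^{-1}}$, transitivity would force $\rho_p(S)$ to swap $E_x$ and $E_{x^{-1}}$, which is false (for instance $e_1\in E_x$ but $\rho_p(S)e_1=xe_4\notin E_{x^{-1}}$). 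The four-lines case is excluded because it would force $\rho_p(T)^4$ to be semisimple, impossible since its order is still divisible by $p$. Hence $\bar H$ is in no maximal subgroup, so $\bar H=\PSp_4(\FF_p)$ and therefore $H=\Sp_4(\FF_p)$.

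\emph{The main obstacle.} The genuine work is in (i) establishing irreducibility of $\rho_p$ mod $p$ uniformly in $p$ and (ii) the imprimitivity analysis; both come down to pinning the $\rho_p(T)$- and $\rho_p(S)$-invariant subspaces of $\FF_p^4$ explicitly, which is exactly where the hypothesis that $x$ is a primitive root (so $x^2\not\equiv\pm1$ and $\mathrm{ord}(x)=p-1$) is used. Over a general finite field the same scheme applies in principle, but the geometric configurations to be ruled out become far more intricate --- one would have to control stabilisers of symplectic spreads as in \cite{BallZieve} rather than the handful occurring here over $\FF_p$.
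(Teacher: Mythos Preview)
Your argument is correct and genuinely different from the paper's. Both proofs compute $\mathrm{ord}(\rho_p(T))=p(p-1)$ (the paper by an explicit conjugating matrix $P$, you by Jordan--Chevalley; both are fine) and then exclude maximal subgroups of $\PSp_4(\FF_p)$, but the organization diverges. The paper follows Mitchell's geometric list as presented in \cite{King}: it rules out the Siegel parabolic and the Lagrangian-pair stabiliser by an explicit fixed-point analysis of $R$ and $S$ on the Lagrangian Grassmannian $X(\FF_p)$; it eliminates two further cases of orders $p(p-1)^2(p+1)$ and $p(p-1)(p+1)^2$ by a Sylow-counting argument (exhibiting $p+2$ distinct subgroups of order $p$ in $G$ while those groups have at most $p+1$); and it disposes of the regular-spread case $\PSL_2(\FF_{p^2}).2$ by the element-order bound you also use. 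Your route is more uniform: irreducibility (via the finite list of $\rho_p(T)$-invariant subspaces) kills all of $\cC_1$; the single observation that $\rho_p(T)$ has order $p(p-1)$ kills $\cC_3$, $\cC_6$, and $\cS$; and a short invariant-subspace argument handles both $\cC_2$ groups at once. This is cleaner, though the paper's Grassmannian computations are not wasted --- they feed directly into the later genus and cusp formulas.

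Two small points. First, in the frame-of-lines step you should take $\rho_p(T)^{12}$ rather than $\rho_p(T)^{4}$: a permutation of four lines can be a $3$-cycle, so only $T^{\lcm(1,2,3,4)}=T^{12}$ is guaranteed to fix each line, and $p>7$ still gives $p\nmid 12$, so the non-semisimplicity argument goes through unchanged. Second, that frame case is in fact extraneous --- for $\Sp_4(q)$ with $q$ odd the $\cC_2$ class consists only of $\Sp_2(q)\wr S_2$ and $\GL_2(q).2$, both of which are covered by your two-block argument; but including it does no harm. Your verification that $E_{x^{-1}}\cap\langle e_3,e_4\rangle=0$ (so that $\rho_p(S)$ neither fixes nor swaps $\{E_x,E_{x^{-1}}\}$) ultimately needs $x^4+x^2+1\not\equiv 0$, i.e.\ $x^6\ne 1$; it is worth making explicit that this is exactly where the hypothesis $p>7$ (so $\mathrm{ord}(x)=p-1>6$) is used.
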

\begin{proof}
  Let us begin by working over the function field $\QQ(x)$, and write $t = \rho(T)$, $s=\rho(S)$, so that
  \begin{align*}
t =& \left(\begin{smallmatrix}
x & 3x^{-1} & 3x & x^{-1} \\
0 & x^{-1}& 2x & x^{-1} \\
0 & 0 & x & x^{-1} \\
0 & 0 & 0 & x^{-1}
                 \end{smallmatrix}\right), &s =& \left(\begin{smallmatrix}
0 & 0 & 0 & -x^{-1} \\
0 & 0 & x & 0 \\
0 & -x^{-1} & 0 & 0 \\
x & 0 & 0 & 0
\end{smallmatrix}\right).
  \end{align*}

  Let
  \[
P = \left(\begin{smallmatrix}
\frac{3(x^{4} - 1)}{2x(x^{4} + x^{2} + 1)} & 1 & \frac{3(x^{2} + 1)}{2x} & 0 \\
\frac{- x^{6} + x^{4} + x^{2} - 1}{2x(x^{4} + x^{2} + 1)} & 0 & 0 & 1 \\
0 & \frac{-(x^2-1)^2}{2(x^{4} + x^{2} + 1)} & 0 & \frac{x^{2} - 1}{2x^{2}} \\
0 & \frac{(x^{2} - 1)^3}{2(x^{4} + x^{2} + 1)} & 0 & 0
\end{smallmatrix}\right).
\]
and observe that
\[
  \det(P) = \left(\frac{3}{16}\right) \cdot x^{-4} \cdot (x - 1)^{6} \cdot (x + 1)^{6} \cdot (x^{2} - x + 1)^{-2} \cdot (x^{2} + x + 1)^{-2} \cdot (x^{2} + 1)^{2}.
\]
Therefore, since $p\geq 11$ and $x$ generates $\FF_p^\times$, it follows that $\det(P)$ is invertible mod $p$. Set $t_1 = P^{-1}tP$ and observe that
\begin{align*}
t_1 &= \left(\begin{smallmatrix}
\frac{1}{x} & 1 & 0 & 0 \\
0 & \frac{1}{x} & 0 & 0 \\
0 & 0 & x & 1 \\
0 & 0 & 0 & x
             \end{smallmatrix}\right).
\end{align*}
This shows that the reduction of $t_1$ mod $p$ has order $p(p-1)$, and hence the same holds for its conjugate $t=\rho(T)$.

Now, since $-1 =\rho(S^2)$, it will suffice to show that the image of our subgroup in $\PSp_4(\FF_p)$ is the entire group. Let $G\subseteq \PSp_4(\FF_p)$ be this image.

The maximal subgroups of $\PSp_4(\FF_p)$ were worked out by Mitchell -- see Theorem 2.8 of \cite{King} for a description. In the notation of that Theorem, we can ignore cases (a) and (b). For the remaining cases, notice that $\langle t\rangle \cap \langle sts^{-1}\rangle = \{\pm 1\}$. Therefore, inside of $\PSp_4$, both $t$ and $sts^{-1}$ generate subgroups of size $p(p-1)/2$ with trivial intersection. It follows that $G$ has order satisfying $\abs{G} \geq p^2(p-1)^2/4$. This is enough to eliminate the cases (i) and (j) just by order considerations alone.

Next, we consider cases (g) and (h), where the maximal subgroups have order $p(p-1)^2(p+1)$, and, respectively, $p(p-1)(p+1)^2$, and the $p$-Sylow subgroups have order $p$.  We eliminate the possibility that $G$ is a subgroup of one of these two maximal subgroups in two steps: first, we show that in each case there are at most $(p+1)$ $p$-Sylow subgroups, and second, we find  $(p+2)$ conjugate subgroups of order $p$ inside $G$, and thus reach a contradiction. 

For the first step,  let $H$ be a $p$-Sylow subgroup; we may assume $H=\langle  t^{p-1} \rangle$.  Then the number of $p$-Sylow subgroups $n_p \equiv 1 \pmod{p}$, and equals the index of the normalizer $N(H)$ inside the group. Note that $\langle t \rangle$, which has order $p(p-1)$, centralizes $H$, and therefore normalizes it, so $p(p-1)$ divides the order $|N(H)|$.
 
In case (g), the size of the maximal subgroup is $p(p-1)^2(p+1)$, so $n_p$ is a divisor of $(p-1)(p+1)$, congruent to $1 \pmod{p}$. Therefore, $n_p=1$ or $n_p=p+1$. In case $(h)$, the group is isomorphic to a quotient of  $\U_2(\FF_{p^2}).2$ \cite{King}, and since $p \ne 2$, has the same number of $p$-Sylow subgroups as the unitary group $\U_2(\FF_{p^2})$. The $p$-Sylow subgroup is the unipotent part $U$ of its normalizer $N(U)$. At the same time, the image $L$ of   $\left\{ \begin{pmatrix} \lambda & 0 \\ 0 & \lambda^{-1} \end{pmatrix}: \lambda \in \FF_{p^2}\right\}$ under a homomorphism from $\SL_2(\FF_{p^2})$ to $\U_2(\FF_{p^2})$ normalizes $U$ (see \cite[Ch. 6-8]{Carter72}). Since $|UL|=p(p^2-1)$, the index $n_p \equiv 1 \pmod{p}$ is a divisor of $(p+1)$, and is again either $1$ or $p+1$.

Next, we explicitly construct $p+2$ conjugate subgroups of $G$ of order $p$.  Let  $H = \langle t^{p-1}\rangle$,  $K=\langle st^{p-1}s^{-1}\rangle$, and $K_d= t^{d(p-1)}Kt^{-d(p-1)}$.  Then $H$ acts on the set of $\{K_d\}_{d=1}^p$. By the orbit-stabilizer theorem, the orbit of $K_1$ either has size $1$ or $p$. Since $|K|=p$ would be a $p$-Sylow subgroup, its normalizer would equal the centralizer. However, we can explicitly check that 
\[t^{p-1}=
\left(\begin{smallmatrix} 
1 & 0 & -3\frac{x^2+1}{x^2-1} & -6 \frac{x^2+1}{(x^2-1)^2} \\
0 & 1 & 0  & \frac{x^2+1}{(x^2-1)^2} \\
0 & 0 & 1 &0 \\
0 & 0 & 0 & 1
\end{smallmatrix}\right)
 \]
 does not centralize $st^{p-1}s^{-1}$. Therefore, the orbit of $K$ must have size $p$, and we have $p$ conjugate subgroups of order $p$. At the same time, if $H=K_d$ for some $d$, then $\langle t^{p-1} \rangle =\langle t^{d(p-1)}s t^{p-1} s^{-1} t^{-d(p-1)} \rangle$, and conjugating by $t^{d(p-1)}$ gives $H= K$, which is not true since $H$ is upper triangular, while $K$ is lower triangular. So far, we have $(p+1)$ distinct subgroups of $G$ of order $p$.

Finally, for $k, d \in \{0, \dots, p-1\}$,   consider the matrices of the form $C_{d,k}\df t^{d(p-1)}s t^{k(p-1)}s^{-1} t^{-d(p-1)}$. Each $C_{d,k}$ belongs to $K_d$, and each matrix in any $K_d$ is of this form. Using the form of $t^{p-1}$ given above, we can explicitly check that the $(3, 2)$-entry of this matrix is $0$. On the other hand, the $(3,2)$-entry of $ s C_{1,1} s^{-1}$ is  $-18\frac{(x^2+1)^3}{(x^2-1)^4}$. These matrices are therefore never equal, since $x$ is a generator modulo $p$, and $p > 3$. For the same reason, $s C_{1,1} s^{-1}$ does not belong to $H$, since $H$ is upper triangular. Therefore, $\langle s C_{1,1}s^{-1}\rangle$ gives the $(p+2)$-nd distinct subgroup of order $p$ inside $G$, and we have  eliminated cases (g) and (h).

It remains to eliminate cases (c) through (f), and this can be done by a direct computation, since $G$ is finitely generated by $t$ and $s$. For (c), this amounts to verifying that $t$ and $s$ do not simultaneously stabilize a line and a hypersurface, which is easy to check. Therefore, all that remains is to treat cases (d), (e) and (f).

Case (d) concerns the stabilizer of a self-polar line in $\PP^3(\FF_p)$, which corresponds to a two-dimensional Lagrangian subspace of $\FF_p^4$. This is the case of the (conjugates of) the Siegel parabolic. It will thus suffice to verify that $R$ and $S$ do not act with simultaneous fixed points on $X(\FF_p)$, with notation as in Section \ref{ss:grassmanian}. When $p\equiv 3 \pmod{4}$, already $R$ acts without fixed points by Lemma \ref{l:rgrassmanian}, so we may suppose that $p\equiv 1\pmod{4}$. The fixed points of $S$ and $R$ are among the $A(a,b,c)$ and among the $B(a,b)$. Those $B(a,b)$ fixed by $S$ all have $b=0$ and $3a^2+x^2=0$, but $B(a,0)$ is fixed by $R$ if only if $x^4+3ax^2+3a^2 = 0$ and $a \neq 0$. This leads to $3a^2-3a+1=0$, and coupled with $x^2=-3a^2$, one deduces that $x$ has order $6$. This contradicts the fact that $x$ is a primitive root in $\FF_p$ for $p > 7$, thus no $B(a,b)$ is simultaneously fixed by $R$ and $S$.

Next we verify that no $A(a,b,c)$ is simultaneously fixed by both $R$ and $S$, where again we may assume that $p\equiv 1 \pmod{4}$. In order to be fixed by $S$, we must have $3a^2+bc \neq 0$, and then we must have that $x$ satisfies the equations:
\begin{align*}
a &= -\frac{a}{3a^2+bc}, & b&= -\frac{cx^2}{3a^2+bc}, & cx^2&= -\frac{b}{3a^2+bc}. 
\end{align*}
The first equation implies that either $a=0$ or $3a^2+bc=-1$. If $a=0$ then $bc \neq 0$ and the last two equations simplify to $b^2=-x^2$, $c^2=-x^{-2}$. Since $p\equiv 1\pmod{4}$, there are four solutions to this to consider: $A(0,\pm \sqrt{-1}x, \pm \sqrt{-1}x^{-1})$, there the signs need not match. It is easily checked that none of these four subspaces are stabilized by $R$.

It thus remains to treat the case when $3a^2+bc=-1$ and $a\neq 0$, in which case we have $b=cx^2$. This leads to considering $A(a,0,0)$ and $A(a,cx^2,c)$ where $3a^2+c^2x^2=-1$. It is easily seen that $A(a,0,0)$ is not fixed by $\rho(R)$, while for the final cases we observe that
\[
  \rho(R)A(a,cx^2,c) = A(a+c-1,-x^2(6a+3c-cx^2-2),x^{-2}(c-2))
\]
Hence, to be fixed, we deduce that $c=1$ and $x^2=-1$. This contradicts the fact that $x$ is primitive mod $p$, and so again we see that no $A(a,b,c)$ is simultaneously fixed by $R$ and $S$. This verifies that $G$ is not contained in any conjugate of a Siegel parabolic and eliminates case (d) from consideration.

Next we treat the case of (e), which is the stabilizer of a pair of skew polar lines in $\PP^3(\FF_p)$. Such a pair corresponds to a pair of Lagrangian subspaces with trivial intersection. If $R$ fixes such a pair then, since it is of order $3$, it fixes each Lagrangian subspace in the pair individually. Since we have already seen that $R$ and $S$ do not simultaneously fix a Lagrangian plane, to eliminate case (e), it suffices to show that $S$ does not exchange any pair of Lagrangian planes with trivial intersection that are each fixed by $R$. Again, to ensure that $R$ has fixed points, we may suppose that $p\equiv 1 \pmod{4}$.

In our analysis of the Siegel case (d), we saw that $R$ only fixes subspaces $A(a,b,c)$ with $3a^2+bc \neq 0$. Thus, we only need to consider pairs $\{A(a,b,c),A(d,e,f)\}$ where $3a^2+bc\neq 0$, $3d^2+ef \neq 0$ and
  \[
    3a^2+bc+3d^2+ef-6ad-ce-bf \neq 0,
  \]
  where this final condition ensures that $A(a,b,c) \oplus A(d,e,f) = \FF_p^4$. Now, the conditions $\rho(S)A(a,b,c) = A(d,e,f)$, $\rho(R)A(a,b,c) = A(a,b,c)$, and $\rho(R)A(d,e,f)=A(d,e,f)$ (along with the various nonvanishing conditions above) are algebraic equations, and one can show that they have no simultaneous solutions in a straightforward but tedious manner using the aid of a computer. Therefore, case (e) also does not arise.

It remains to treat case (f), which is the case of a regular spread. By \cite{Dye84}, this corresponds to the semidirect product $\PSL_2(\FF_{p^2}).\langle \sigma \rangle$, where $\sigma$ is an order two automorphism. We will examine the orders of elements in this group; since $p \ne 2$, we can safely work with $\SL_2(\FF_{p^2})$. By \cite[Table 1.1]{BC}, the elements in $\SL_2(\FF_{p^2})$ with order divisible by $p$ must either have order $p$ or $2p$. However, the order of $t=\rho_p(T)$ is $p(p-1)$. Since $p$ is large enough, $t$ is not contained in any subgroup of this group, and we have eliminated the last case. Therefore, the only possibility that remains is that $\langle \rho_p(T),\rho_p(S)\rangle = \Sp_4(\FF_p)$
\end{proof}

\begin{cor}
  \label{c:sp4surject}
  Let $p > 7$ be a prime, and let $x \in \ZZ$ be a primitive root mod $p$. Then the reduction $\phi_p$ induces a surjective map
  \[
  \widehat\Gamma \to \Sp_4(\Zp).
  \]
\end{cor}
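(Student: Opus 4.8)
The plan is to bootstrap from the mod~$p$ surjectivity already established in Theorem~\ref{t:sp4surject}, feed it into the lifting results of Subsection~\ref{ss:lifting}, and then pass to the inverse limit. Throughout, $\phi$ denotes the symplectic representation of Theorem~\ref{t:symplecticmain}, i.e.\ the $y = x^{-1}$ specialization of the family constructed above; since $x$ is prime to $p$ it makes sense to reduce $\phi$ modulo $p^r$ for every $r$. Put $G_r = \Sp_4(\ZZ/p^r\ZZ)$ and let $L_r \subseteq G_r$ be the image of $\Gamma$ under $\phi$ reduced mod~$p^r$; the goal is to show $L_r = G_r$ for all $r \geq 1$.

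First I would observe that Theorem~\ref{t:sp4surject} applies directly: its hypotheses~(1) and~(2) are our hypotheses $p > 7$ and ``$x$ a primitive root mod $p$'', while~(3), $x \equiv y^{-1}\pmod p$, holds because $\phi$ is precisely the $y = x^{-1}$ specialization. Since $S$ and $T$ generate $\Gamma$, we get $L_1 = \langle \phi_p(S),\phi_p(T)\rangle = \Sp_4(\FF_p) = G_1$. I would then apply Theorem~\ref{thm:irred} with $G = \Sp_4$ and the prime $p$, whose three hypotheses I would verify as follows: (i) the maximum nilpotency degree of $\mathfrak{sp}_4$ is $4$ --- a regular nilpotent element acts on the standard $4$-dimensional representation as a single Jordan block --- so $p > 7$ prime forces $p \geq 11 > 8 = 2\cdot 4$; (ii) $G_{2,1}\cong \Lie(\Sp_4)\otimes\FF_p$, which holds for $\Sp_4$ away from a finite set of small primes by the first lemma of Subsection~\ref{ss:lifting}, all of which are $\leq 7$; and (iii) the adjoint action of $\Sp_4(\FF_p)$ on $\Lie(\Sp_4)\otimes\FF_p$ is irreducible and nontrivial, since the adjoint representation of a simple group of type $C_2$ is irreducible in every characteristic $\neq 2$. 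Theorem~\ref{thm:irred} then yields $L_r = G_r$ for all $r \geq 0$, i.e.\ $\phi$ induces surjections $\Gamma \to \Sp_4(\ZZ/p^n\ZZ)$ for every $n$.

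Finally, these surjections are compatible with the transition maps $\Sp_4(\ZZ/p^{n+1}\ZZ)\to\Sp_4(\ZZ/p^n\ZZ)$, so they assemble into a continuous homomorphism $\widehat\Gamma \to \varprojlim_n \Sp_4(\ZZ/p^n\ZZ) = \Sp_4(\Zp)$ with dense image; since $\widehat\Gamma$ is profinite the image is compact, hence closed, hence all of $\Sp_4(\Zp)$, which is the assertion.

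The only step I expect to require genuine care is the verification of the three hypotheses of Theorem~\ref{thm:irred} above --- in particular, confirming that the single bound $p > 7$ simultaneously clears the nilpotency-degree condition, the identification of $G_{2,1}$ with the Lie algebra, and the irreducibility of the adjoint module in characteristic $p$. The substantive work, surjectivity modulo $p$ via Mitchell's classification of the maximal subgroups of $\PSp_4(\FF_p)$, has already been carried out in Theorem~\ref{t:sp4surject}, so nothing further is needed there.
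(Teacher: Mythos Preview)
Your proposal is correct and takes essentially the same approach as the paper: invoke Theorem~\ref{t:sp4surject} for mod~$p$ surjectivity, then apply Theorem~\ref{thm:irred} to lift, and pass to the inverse limit. The paper's own proof is a two-sentence appeal to Theorem~\ref{thm:irred}, asserting that the hypotheses hold ``by the standard theory of linear algebraic groups'' and noting only that the symplectic form $J$ (which has $\pm 3$ entries) degenerates modulo~$3$ but is fine for $p>7$; your version spells out the nilpotency-degree bound, the identification $G_{2,1}\cong\mathfrak{sp}_4\otimes\FF_p$, and the adjoint irreducibility explicitly, which is a strict improvement in detail but not a different argument.
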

\begin{proof}
This is a direct application of Theorem \ref{thm:irred}.
The symplectic form we are using behaves poorly if reduced modulo $3$, however, for $p> 7$ the group is isomorphic with the usual symplectic group $\Sp_{4}(\FF_p)$, and the necessary hypotheses required to apply Theorem \ref{thm:irred} hold by the standard theory of linear algebraic groups.
\end{proof}

Next, we examine the character $\chi$. Let $p \ne 2, 3$.
To bridge the results in \cite{Srin} with our explicit computations for $\chi$, consider the isomorphism of symplectic spaces $\mu: V \rightarrow V'$, where $(V, J)$ is our symplectic space with invariant form $J$,  $(V', J')$ is the symplectic space in \cite{Srin} with form  \[J'=\left(\begin{smallmatrix}
0 & 1 & 0 & 0 \\
-1 & 0& 0 & 0 \\
0 & 0 &0 & 1 \\
0 & 0 & -1 & 0
                 \end{smallmatrix}\right),\] and $\mu$ has matrix 

\begin{equation}
\label{eq:ismsymplectic}
M=\left(\begin{smallmatrix}1 & 0 & 0 & 0 \\
0 & 0& 0 & 1 \\
0 & 0 & 1/3 &0\\
0 & 1 & 0 &0
                 \end{smallmatrix}\right)
\end{equation}
 with respect to the standard basis.  Let $\theta_i$  be characters as defined in \cite{Srin}. 
\begin{lem}
  \label{l:chardecomp}
Let $p\ne 2, 3$. If $\chi$ is the character of the permutation representation of $\Sp_4(\FF_p)$ on $X(\FF_p)$, then one has
\[
\chi= 1 + \theta_9 +\theta_{11}.
\]
\end{lem}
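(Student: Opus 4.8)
The plan is to realize $\chi$ as a parabolically induced character and then to pin down its constituents by comparison with \cite{Srin}. By Witt's theorem $\Sp_4(\FF_p)$ acts transitively on the set $X(\FF_p)$ of Lagrangian subspaces of $(\FF_p^4,J)$, with point stabilizer the Siegel parabolic $P\cong\GL_2(\FF_p)\ltimes\Sym_2(\FF_p)$, so that $\chi=\Ind_P^{\Sp_4(\FF_p)}1$. First I would check that $\chi$ is multiplicity free with exactly three irreducible constituents. Indeed $\langle\chi,\chi\rangle$ is the number of $\Sp_4(\FF_p)$-orbits on $X(\FF_p)\times X(\FF_p)$, and by Witt's theorem two Lagrangians $L_1,L_2$ lie in the same orbit precisely when $\dim(L_1\cap L_2)$ agrees, a quantity taking the values $0,1,2$; hence $\langle\chi,\chi\rangle=3$. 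As $\chi$ is a transitive permutation character, $\langle\chi,1\rangle=1$, and therefore $\chi=1+\chi'+\chi''$ with $\chi'\ne\chi''$ nontrivial and irreducible.

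To name $\chi'$ and $\chi''$: since $P$ contains a Borel subgroup, $\chi$ is a summand of $\Ind_B^{\Sp_4(\FF_p)}1$, so $\chi'$ and $\chi''$ are unipotent characters lying in the principal series. Reading the degrees of the unipotent characters off \cite{Srin} --- they are $1$, $p^4$, $\tfrac12 p(p+1)^2$, $\tfrac12 p(p-1)^2$, and $\tfrac12 p(p^2+1)$ with multiplicity two --- and using $\chi(1)=(p^2+1)(p+1)$, which forces $\chi'(1)+\chi''(1)=p^3+p^2+p$, one sees that the only admissible pair of degrees is $\{\tfrac12 p(p+1)^2,\ \tfrac12 p(p^2+1)\}$. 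The constituent of degree $\tfrac12 p(p+1)^2$ is then unambiguously $\theta_9$ in Srinivasan's labelling. To decide which of the two unipotent characters of degree $\tfrac12 p(p^2+1)$ is the remaining constituent, I would evaluate $\chi$ on one further conjugacy class and compare with \cite{Srin}: Lemma \ref{l:sgrassmanian} already gives $\chi(S)=p+2+\left(\frac{-1}{p}\right)$, and the orbit analysis of Section \ref{ss:grassmanian} together with Lemma \ref{l:rgrassmanian} yields $\chi$ on $R$ and on the powers of $T$. Transporting these class representatives to Srinivasan's model through the symplectic isomorphism $\mu$ of \eqref{eq:ismsymplectic} and matching values isolates $\theta_{11}$, whence $\chi=1+\theta_9+\theta_{11}$. (Alternatively, one can argue intrinsically: the constituents of $\chi$ correspond to the irreducible characters of $W(C_2)$ occurring in $\Ind_{W_P}^{W(C_2)}1=1+\chi_a+\rho$, with $\rho$ the $2$-dimensional reflection character and $\chi_a$ the character trivial on the reflection generating $W_P$; this identifies $\chi'$ and $\chi''$ as the unipotent characters labelled by $\chi_a$ and $\rho$, and it remains only to check that these carry Srinivasan's labels $\theta_9$ and $\theta_{11}$.)

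The step I expect to be the main obstacle is this final identification: reconciling Srinivasan's parametrization of the conjugacy classes and her closed formulas for the unipotent characters with the concrete group $\Sp_4(\FF_p)$ equipped with the nonstandard form $J$, and computing enough character values to separate $\theta_{11}$ from the other unipotent character of the same degree $\tfrac12 p(p^2+1)$ --- since the degree alone does not suffice there, the fixed-point formulas of Section \ref{ss:grassmanian} (via $S$, $R$, or the powers of $T$) must be brought to bear and read against Srinivasan's table.
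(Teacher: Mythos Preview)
Your approach is correct and somewhat different from the paper's. You establish $\langle\chi,\chi\rangle=3$ directly via Witt's theorem and orbit counting on pairs of Lagrangians, which is clean and self-contained; the paper instead quotes \cite{GM} for the full decomposition of $\Ind_B^{\Sp_4}1=1+2\theta_9+\theta_{11}+\theta_{12}+\theta_{13}$ and then invokes Frobenius reciprocity. Both routes lead to the same two candidates $1+\theta_9+\theta_{11}$ and $1+\theta_9+\theta_{12}$ after a degree check (one small slip: $\tfrac12 p(p-1)^2$ is the cuspidal unipotent $\theta_{10}$ and is \emph{not} in the principal series, though this does not affect your degree-matching). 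For the tiebreak, the paper chooses a tailor-made element $M$ that is visibly in Srinivasan's class $A_{31}$ and has an easy fixed-point count of $2p+1$, whereas you propose using $S$, $R$, or powers of $T$ via the formulas of Section~\ref{ss:grassmanian}. That works in principle, but two cautions: (i) invoking Lemma~\ref{l:rgrassmanian} here would be circular, since in the paper that lemma is \emph{proved} using the present decomposition; (ii) identifying the Srinivasan class of $\rho(S)$ splits into cases according to $p\bmod 4$. The paper's choice of $M$ avoids both issues, which is what it buys; your Witt-theorem argument, on the other hand, removes the dependence on the external reference \cite{GM}.
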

\begin{proof}
Recall that the Siegel parabolic subgroups of $\Sp_4(\FF_p)$ stabilize the maximal isotropic subspaces; fixing such a Siegel parabolic $P$, we have that the permutation representation is the induced representation $\Ind_P^{\Sp_4}1$. By \cite[Table 2.8]{GM}, given the Borel $B \subseteq P \subseteq \Sp_4(\FF_p)$, the character of $\Ind_B^{\Sp_4}1$ decomposes as $1+2\theta_9+\theta_{11}+\theta_{12}+\theta_{13}$. By  Frobenius reciprocity and transitivity of induction, $\chi$ decomposes as the trivial representation, and  some linear combination of $\theta_9, \theta_{11}, \theta_{12}$ and $\theta_{13}$.

Next, we note that the degree of the permutation representation is $ \abs{X(\FF_p)}=p^3+p^2+p+1$. Using the degrees of the characters $\theta_i$ described in \cite{Srin}, $\chi$ decomposes as either $1+\theta_9+\theta_{11}$ or $1+\theta_9+\theta_{12}$.

Consider the matrix \[M=\left(\begin{smallmatrix} 1&0&0&1 \\ 0&1&0&0 \\ 0&-1/3 & 1& 0\\ 0&0&0&1\end{smallmatrix}\right) \in \Sp_4(\FF_p).\] Then one easily check that the fixed points of $M$ are $A(1/3, 0, c), A(-1/3, 0, c)$ for all $c \in \FF_p$, and $B(0,0)$. Therefore, $M$ has $2p+1$ fixed points. One can also check that via the isomorphism $\mu$ defined in Equation (\ref{eq:ismsymplectic}), $M$ is conjugate to the class $A_{31}$ in \cite{Srin}. Examining the tables in \cite{Srin},  the value of $(1+\theta_9+\theta_{11})$  on $A_{31}$ is $2p+1$, and the value of $(1+\theta_9+\theta_{12})$ is $p+1$, so $\chi$ must decompose as $1+\theta_9+\theta_{11}$.
\end{proof}

Based on this decomposition and the tables in \cite{Srin}, we summarize in the following table  the values of $\chi$  on certain conjugacy classes in $\Sp_4(V)$ that will be useful for cusp calculations later on.

\begin{table}[ht]
{\renewcommand{\arraystretch}{1.2}
\begin{tabular}{c|c } 
 Class &  Value of $\chi$ \\ 
  \hline
 $A_1$, $A_1'$ & $p^3+p^2+p+1$  \\ 
  %\hline
$A_{31}$, $A_{31}'$ & $2p+1$   \\ 
  %\hline
$B_8'$ & $p+3$  \\ 
  %\hline
$B_9$ & $3$ 
\end{tabular}}
\caption{Values of $\chi$ on certain conjugacy classes in $\Sp_4(V)$}
\label{t:charvals}
\end{table}

\begin{prop}
\label{p:cusps}
  Suppose that $p$, $x$ and $y$ satisfy the hypotheses of Theorem \ref{t:sp4surject}. Then
  \[
c=2p+12.
  \]
\end{prop}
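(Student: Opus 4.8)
The plan is to apply Lemma~\ref{l:cusps}. By Theorem~\ref{t:sp4surject} the order of the mod~$p$ reduction of $\rho(T)$ is $N = p(p-1)$, so
\[
c = \frac{1}{p(p-1)}\sum_{d\mid p(p-1)}\varphi\!\left(\frac{p(p-1)}{d}\right)\chi(\rho_p(T)^d),
\]
and everything reduces to computing $\chi(\rho_p(T)^d)$, i.e.\ the number of fixed Lagrangians of $\rho_p(T)^d$ on $X(\FF_p)$, for each divisor $d$ of $p(p-1)$. By the proof of Theorem~\ref{t:sp4surject}, $\rho_p(T)$ is conjugate in $\Sp_4(\FF_p)$ to $t_1$, whose Jordan decomposition is $t_1 = su = us$ with $s = \diag(x^{-1},x^{-1},x,x)$ of order $p-1$ and $u$ a unipotent element of Jordan type $(2,2)$ of order $p$. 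Hence $\rho_p(T)^d$ is conjugate to $s^d u^d$; writing $d = p^a m$ with $a\in\{0,1\}$ and $m\mid(p-1)$, one has $x^d = x^m$ (as $x$ has order $p-1$ and $p\equiv 1\pmod{p-1}$) while $u^d = u^m\neq 1$ if $a=0$ and $u^d = 1$ if $a=1$. Since $x$ is a primitive root, $x^m = 1$ exactly when $m=p-1$ and $x^m=-1$ exactly when $m=(p-1)/2$.

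Next I would identify the conjugacy class of $\rho_p(T)^d$ and read off $\chi$ from Table~\ref{t:charvals}. For $a=1$ the element is the semisimple $\diag(x^{-m},x^{-m},x^m,x^m)$: it is $1$ (class $A_1$, $\chi = p^3+p^2+p+1$) when $m=p-1$; it is $-1$ (class $A_1'$, $\chi = p^3+p^2+p+1$) when $m=(p-1)/2$; and otherwise it has eigenvalues $(\lambda,\lambda,\lambda^{-1},\lambda^{-1})$ with $\lambda^2\neq 1$, and a direct count of its invariant Lagrangians (the two eigenplanes together with the $p+1$ isotropic graphs between them) gives $\chi=p+3$, class $B_8'$. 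For $a=0$ the element is $s^m u^m$ with $u^m\neq 1$: when $m=p-1$ it is the $(2,2)$-unipotent $u^{p-1}$; when $m=(p-1)/2$ it is $-u^{(p-1)/2}$; and otherwise its semisimple part has eigenvalues $(\lambda,\lambda,\lambda^{-1},\lambda^{-1})$ with $\lambda^2\neq 1$ and its unipotent part acts as a single Jordan block on each eigenplane, so a direct count gives exactly three invariant Lagrangians (the two eigenplanes and the span of the two eigenlines), i.e.\ $\chi=3$, class $B_9$. The one delicate point is that $u$, and hence each nonzero power of it, lies in the \emph{split} $(2,2)$-unipotent class $A_{31}$ rather than the other $(2,2)$-unipotent class (which would contribute $\chi=1$): all powers $u^k$ with $p\nmid k$ are mutually conjugate, since $u^k-1 = kN_u$ only rescales the associated quadratic form on a two-dimensional space by $k$, leaving its discriminant unchanged modulo squares, and the explicit matrix for $\rho_p(T)^{p-1}$ recorded in the proof of Theorem~\ref{t:sp4surject} is conjugate to the $(2,2)$-unipotent $M$ identified with $A_{31}$ in Lemma~\ref{l:chardecomp}. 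Thus $\chi(u^{p-1}) = 2p+1$, and since $-1$ acts trivially on $X(\FF_p)$ also $\chi(-u^{(p-1)/2}) = 2p+1$ (class $A_{31}'$).

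Finally I would assemble the sum, grouping the divisors of $p(p-1)$ as $d=m$ or $d=pm$ with $m\mid(p-1)$ and using $\varphi\bigl(p(p-1)/d\bigr) = \varphi((p-1)/m)$ when $a=1$ and $=(p-1)\varphi((p-1)/m)$ when $a=0$. Since the divisors of $p-1$ split as $\{p-1\}\sqcup\{(p-1)/2\}\sqcup D$ and $\sum_{m\mid p-1}\varphi((p-1)/m) = p-1$, one gets $\sum_{m\in D}\varphi((p-1)/m) = p-3$, whence
\[
\sum_{d\mid p(p-1)}\varphi\!\left(\tfrac{p(p-1)}{d}\right)\chi(\rho_p(T)^d) = 2(p^3+p^2+p+1) + (p+3)(p-3) + 2(p-1)(2p+1) + 3(p-1)(p-3),
\]
which simplifies to $2p^3 + 10p^2 - 12p = 2p(p-1)(p+6)$, giving $c = 2(p+6) = 2p+12$. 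The main obstacle is the conjugacy‑class bookkeeping in the middle step — especially verifying that the $(2,2)$‑unipotent powers land in class $A_{31}$ (value $2p+1$) rather than in the other $(2,2)$‑unipotent class (value $1$), and confirming via the invariant‑subspace counts that the mixed and semisimple powers are classes $B_9$ and $B_8'$; once each class is pinned down, the remaining arithmetic is routine.
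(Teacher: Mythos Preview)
Your proposal is correct and follows essentially the same approach as the paper: both apply Lemma~\ref{l:cusps}, identify the conjugacy class of $\rho_p(T)^d$ for each divisor $d\mid p(p-1)$ (splitting into the semisimple, unipotent, and mixed cases), read off $\chi$ from Table~\ref{t:charvals}, and sum. The only substantive difference is in the $A_{31}$ identification: the paper produces explicit conjugating matrices taking $T^{p-1}$ and $T^{(p-1)/2}$ to Srinivasan's representatives, whereas you argue via the discriminant invariance of the quadratic form under $N_u\mapsto kN_u$ together with an appeal to the matrix $M$ of Lemma~\ref{l:chardecomp}; your argument is sound in outline but, as you note, the step linking $\rho_p(T)^{p-1}$ to $M$ still requires a short direct check.
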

\begin{proof}

 In light of Lemma \ref{l:cusps}, we need to consider $T^d$ for $d | p(p-1)$. In particular, either $d$ is a divisor of $p-1$, or $d=p\cdot d'$ where $d'$ is a divisor of $p-1$. 
As in the proof of Theorem \ref{t:sp4surject}, note that $T$ is conjugate by $P$ to $t_1$. We calculate  that \[ t_1^d=\left(\begin{smallmatrix}
x^{-d} & dx^{1-d} & 0 & 0 \\
0 & x^{-d}& 0 & 0 \\
0 & 0 & x^d & dx^{d-1} \\
0 & 0 & 0 & x^{d}
                 \end{smallmatrix}\right).\] 

For $1 \le d < \frac{p-1}{2}$, we have that $x^d \neq x^{-d}$. In these cases, the minimal polynomial (in variable $u$) of $T^d$ is $(u-x^d)^2(u-x^{-d})^2$, and therefore $T^d$ belongs to the conjugacy class $B_9(i)$ as in \cite{Srin}. By Table \ref{t:charvals}, in this case $\chi(T^d)=3$.

For  $p-1 < d< \frac{p(p-1)}{2}$, where in particular $d=p\cdot d'$, again $x^d \neq x^{-d}$. Then $T^d$ has minimal polynomial $(u-x^d)(u-x^{-d})$, and therefore belongs to $B_8(i)$. In such cases, $\chi(T^d)=p+3$. 

Clearly, $T^{p(p-1)}$ belongs to $A_1$ and $T^{\frac{p(p-1)}{2}}$ belongs to $A_1'$, for which $\chi(T^d)=p^3+p^2+p+1$. We have left to find the classes to which $T^d$ belongs for $d=\frac{p-1}{2}$ and $d=p-1$.

Let $d=\frac{p-1}{2}$. Using the matrices $P$ and $t_1^d$, we have 
\[T^d=\left(\begin{smallmatrix}
-1 & 0 & \frac{3(x^2+1)}{2(x^2-1)} & \frac{3(x^2+1)}{(x^2-1)^2} \\
0 & -1& 0 & -\frac{x^2+1}{2(x^2-1)} \\
0 & 0 & -1& 0 \\
0 & 0 & 0 &-1
                 \end{smallmatrix}\right).\]
 Straightforward computations show that via the isomorphism $\mu$ in Equation (\ref{eq:ismsymplectic}), $T^d$ is conjugate to the class $A_{31}'$ by the matrix 

\[\left(\begin{smallmatrix}
-\frac{x^2-11}{4(x^2-1)} & 0& \frac{x^2+13}{4(x^2-1)}&0\\
 0& -\frac{2(x^2-1)}{x^2+1} & 0 & \frac{2(x^2-1)}{x^2+1}\\
0 & \frac{x^2+13}{2(x^2+1)}  & 0 & \frac{x^2-11}{2(x^2+1)}\\
-1 & 0 & -1 &0
                 \end{smallmatrix}\right).\]
 Since $x$ is a multiplicative generator modulo $p$, $x^2 \ne \pm 1 \pmod{p}$ and  one can check that this matrix belongs to $\Sp_4(V')$. Therefore, $\chi(T^{\frac{p-1}{2}})=2p+1$.

Likewise, for $d=p-1$, we have 
\[T^{p-1}=\left(\begin{smallmatrix}
1 & 0 & -\frac{3(x^2+1)}{(x^2-1)} & -\frac{6(x^2+1)}{(x^2-1)^2} \\
0 & 1& 0 & \frac{x^2+1}{x^2-1} \\
0 & 0 & 1& 0 \\
0 & 0 & 0 &1
                 \end{smallmatrix}\right),\]
 which is conjugate to $A_{31}$   via $\mu$  by the matrix 
\[\left(\begin{smallmatrix}
-\frac{x^2-5}{2(x^2-1)} & 0& \frac{x^2+7}{2(x^2-1)}&0\\
 0& -\frac{x^2-1}{x^2+1} & 0 & \frac{x^2-1}{x^2+1}\\
0 & \frac{x^2+7}{2(x^2+1)}  & 0 & \frac{x^2-5}{2(x^2+1)}\\
-1 & 0 & -1 &0
                 \end{smallmatrix}\right),\] which again belongs to $\Sp_4(V')$. Therefore, $\chi(T^{p-1})=2p+1$.

Finally, using Lemma \ref{l:cusps}, we have

\begin{align*}
p(p-1) c= &  \sum_{\substack{d| p-1 \\ 1< d < \frac{p-1}{2}}} 3\varphi\left(\frac{p(p-1)}{d}\right) + \sum_{d \in\{ \frac{p-1}{2}, \, p-1\}} (2p+1)\varphi\left(\frac{p(p-1)}{d}\right) +\\
 &  \sum_{\substack{d| p(p-1) \\ p-1< d < \frac{p(p-1)}{2}}} (p+3)\varphi\left(\frac{p(p-1)}{d}\right) + \sum_{d \in\{ \frac{p(p-1)}{2}, \,  p( p-1) \}} (p^3+p^2+p+1)\varphi\left(\frac{p(p-1)}{d}\right).
\end{align*}
Rearranging the terms,  using $\varphi(p)=\varphi(2p)=p-1$ for odd $p$, and the identity $\sum_{d|n} \varphi(n/d)=n$, we get 
\begin{align*}
p(p-1) c & = 3 \sum_{d|p(p-1)} \varphi\left(\frac{p(p-1)}{d}\right)+ 2(2p-2)(p-1)
 + p \sum_{d| (p-1)}\varphi\left(\frac{p-1}{d}\right) + 2 (p^3+p^2-2)\\
& = 3p(p-1)+4(p-1)^2+p(p-1)+2(p^3+p^2-2)\\
& =p(p-1)(2p+12),
\end{align*}
and the result follows.
\end{proof}

We can say more about the cusps in the following corollary.

\begin{cor}
There are $3$ cusps of width $1$, $4$ cusps of width $\frac{p-1}{2}$, one cusp of width $p$, and $2p+4$ cusps of width $\frac{p(p-1)}{2}$.
\end{cor}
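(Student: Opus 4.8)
The plan is to combine the M\"obius inversion of Lemma \ref{l:cusps} with the conjugacy-class data already extracted in the proof of Proposition \ref{p:cusps}. Recall that the cusps of the point stabilizer correspond to the cycles of $\rho_p(T)$ acting on $X(\FF_p)$, that the width of a cusp is the length of the corresponding cycle, and that every such length divides $N=p(p-1)$ by Theorem \ref{t:sp4surject}. Writing $c_n$ for the number of cusps of width $n$, the identity $\chi(T^d)=\sum_{n\mid d}nc_n$ inverts to $c_n=\tfrac1n\sum_{d\mid n}\mu(n/d)\chi(T^d)$, exactly as in the proof of Lemma \ref{l:cusps}. It therefore suffices to evaluate this for each divisor $n$ of $p(p-1)$, feeding in the values of $\chi(T^d)$ established in Proposition \ref{p:cusps}: $\chi(T^d)=3$ for $d\mid p-1$ with $1\le d<\tfrac{p-1}{2}$; $\chi(T^{(p-1)/2})=\chi(T^{p-1})=2p+1$; $\chi(T^d)=p+3$ for $d=pd'$ with $d'\mid p-1$ and $1\le d'<\tfrac{p-1}{2}$; and $\chi(T^{p(p-1)/2})=\chi(T^{p(p-1)})=p^3+p^2+p+1$.

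First I would read off the small widths. Since $1<\tfrac{p-1}{2}$, we get $c_1=\chi(T)=3$. If $1<n<\tfrac{p-1}{2}$ is a divisor of $p-1$, then every $d\mid n$ also satisfies $d\mid p-1$ and $d<\tfrac{p-1}{2}$, so $\chi(T^d)=3$ and $nc_n=3\sum_{d\mid n}\mu(n/d)=0$; similarly, for $n=pd'$ with $1<d'<\tfrac{p-1}{2}$, splitting the divisors of $n$ into those dividing $d'$ (where $\chi=3$) and those of the form $pe$ with $e\mid d'$ (where $\chi=p+3$) gives $nc_n=p\sum_{e\mid d'}\mu(d'/e)=0$. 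For $n=\tfrac{p-1}{2}$ the proper divisors contribute $3\sum_{d\mid n,\,d<n}\mu(n/d)=-3$ while the top term contributes $2p+1$, so $\tfrac{p-1}{2}c_{(p-1)/2}=2p-2$ and $c_{(p-1)/2}=4$. The same cancellation (using $\mu(2)=-1$ so that the two equal contributions $2p+1$ from $d=\tfrac{p-1}{2}$ and $d=p-1$ offset) gives $c_{p-1}=0$, and an identical computation gives $c_p=1$.

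At this stage every divisor of $p(p-1)$ strictly below $\tfrac{p(p-1)}{2}$ has been accounted for: these are exactly the $e\mid p-1$ together with the $pe$ for $e\mid p-1$, $e<\tfrac{p-1}{2}$, and among them the only nonzero multiplicities are $c_1=3$, $c_{(p-1)/2}=4$, $c_p=1$. To split the two remaining widths $\tfrac{p(p-1)}{2}$ and $p(p-1)$ I would avoid pushing the M\"obius sum further and instead invoke two global identities: the total cusp count $c=2p+12$ from Proposition \ref{p:cusps}, which gives $c_{p(p-1)/2}+c_{p(p-1)}=2p+4$, and the point count $\sum_n nc_n=\abs{X(\FF_p)}=p^3+p^2+p+1$, which after substituting the values above becomes $\tfrac{p(p-1)}{2}c_{p(p-1)/2}+p(p-1)c_{p(p-1)}=p(p-1)(p+2)$, i.e. $c_{p(p-1)/2}+2c_{p(p-1)}=2p+4$. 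Subtracting the two relations yields $c_{p(p-1)}=0$ and $c_{p(p-1)/2}=2p+4$, which is the assertion. The one step that genuinely needs care is the bookkeeping of the third paragraph --- checking that the divisor list of Proposition \ref{p:cusps} is exhaustive below the top width, so that the two global identities really do pin down the last two multiplicities; the arithmetic itself is routine.
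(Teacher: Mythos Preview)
Your proof is correct. Both you and the paper feed the conjugacy-class values of $\chi(T^d)$ from Proposition~\ref{p:cusps} into the cycle-counting formalism, so the overall strategy coincides; the differences are in bookkeeping.

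The paper restricts attention from the outset to divisors of $\tfrac{p(p-1)}{2}$, implicitly using that $T^{p(p-1)/2}$ lies in the class $A_1'$ (i.e.\ is $-I$), hence acts trivially on the Lagrangian Grassmannian; this forces every cycle length to divide $\tfrac{p(p-1)}{2}$ and makes $c_{p-1}=c_{p(p-1)}=0$ automatic. With only one width left undetermined, the paper closes with the single identity $c=2p+12$. It also phrases the vanishing of the intermediate $c_d$ via the recursive formula together with nonnegativity rather than the closed M\"obius sum.

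You instead work with all divisors of $p(p-1)$, verify $c_{p-1}=0$ by a direct M\"obius cancellation, and then pin down the two top widths by pairing the cusp count with the point count $\sum_n nc_n=\abs{X(\FF_p)}$. This is a legitimate and self-contained route; it trades the one-line observation about $-I$ for an extra linear equation. Your closing caution about the divisor bookkeeping is apt: the list you give of divisors of $p(p-1)$ strictly below $\tfrac{p(p-1)}{2}$ is indeed exhaustive, so the two global identities really do suffice.
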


\begin{proof}
We use the values $\chi(T^d)$ from the proof of Proposition \ref{p:cusps}. First, we have $c_1=\chi(T)=3$. 

Next, consider a divisor $d \mid \frac{p(p-1)}{2}$ such that $1 < d < \frac{p-1}{2}$. Then

\[ c_d=\frac{1}{d}\left(\chi(T^d) -\sum_{\substack{f|d \\ 1 \le f<d}} f c_f\right)=\frac{1}{d}\left(3-3-\sum_{\substack{f|d \\ 1 < f<d}} f c_f\right) \ge 0,\]
therefore $c_d=0$.
This implies that 
\begin{align*}
 c_{\frac{p-1}{2}}& =  \frac{2}{p-1} \left( \chi(T^{\frac{p-1}{2}}) -\sum_{\substack{f\mid \frac{p-1}{2} \\ 1 \le f < \frac{p-1}{2} }}fc_f \right) =  \frac{2}{p-1} \left( 2p-2-\sum_{\substack{f\mid \frac{p-1}{2} \\ 1 <f < \frac{p-1}{2} } }fc_f \right) =4.
\end{align*}

Next, we have
\[c_p=\frac{1}{p}\left( \chi(T^p)-c_1\right)=1.\]
On the other hand, suppose $d=p \cdot d'$ with $d'$ a divisor of $\frac{p-1}{2}$ such that $1 < d' < \frac{p-1}{2}$. Then
\[ c_d=\frac{1}{d}\left(\chi(T^d) -\sum_{\substack{f|d \\ 1 \le f<d}} f c_f\right)=\frac{1}{d}\left(p+3-3-p-\sum_{\substack{f|d \\ f \ne 1, p, d}} f c_f\right) \ge 0,\]
so again $c_d=0$.

The only other divisor  left is $\frac{p(p-1)}{2}$ itself, and
\[c_{\frac{p(p-1)}{2}}=2p+12-1-3-4=2p+4. \]
\end{proof}

Before we can calculate the genus of the curve corresponding to our noncongruence group, we need one last result on the number of elliptic points of order $3$.

\begin{lem}
  \label{l:rgrassmanian}
 Let $p > 3$, and let $\veps_3$ denote the number of fixed points of $\rho(R)$ in its action on the symplectic Grassmanian $X(\FF_p)$. Then
  \[
  \veps_3 = p+1+(p+1) \left( \frac{-3}{p} \right)= \begin{cases}
    2p+2 & p\equiv 1\pmod{3},\\
    0 & p\equiv 2\pmod{3}.
  \end{cases}
\]
In particular, $\veps_3$ is independent of $x$ and $y$.
\end{lem}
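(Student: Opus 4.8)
The plan is to avoid the explicit case analysis on the subspaces $A(a,b,c)$, $B(a,b)$, $C(a)$, $D$ and instead exploit the fact that $\rho(R)$ is a semisimple element of $\Sp_4(\FF_p)$ whose characteristic polynomial does not depend on $x$ and $y$, after which the fixed Lagrangians can be counted by pure linear algebra. The starting observation is that $R^3 = -I$ in $\SL_2(\ZZ)$ (a one-line matrix check), while $\rho(S)^2 = -I$, so $\rho(R)^3 = \rho(-I) = -I$ and $\rho(R)^6 = I$. Since $p>3$ is prime to $6$, the reduction $\rho_p(R)$ is semisimple with eigenvalues among $\{-1,\omega,\omega^{-1}\}$ for $\omega$ a primitive sixth root of unity. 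A direct computation gives $\tr\rho(R) = -1$, a value visibly independent of $x$ and $y$; since the characteristic polynomial of $\rho(R)$ is a product of factors $(\lambda+1)$ and $(\lambda^2-\lambda+1)$ of total degree $4$, the trace condition forces it to equal $(\lambda+1)^2(\lambda^2-\lambda+1)$, and hence the minimal polynomial is $(\lambda+1)(\lambda^2-\lambda+1)$.

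Consequently $V := \FF_p^4$ is the internal direct sum $V = V_{-1}\oplus W$ of the $\rho(R)$-stable, $\FF_p$-rational subspaces $V_{-1} = \ker(\rho(R)+1)$ and $W = \ker(\rho(R)^2-\rho(R)+1)$, both of dimension $2$; over $\overline{\FF_p}$ the plane $W$ splits into the $\omega$- and $\omega^{-1}$-eigenlines $V_\omega$ and $V_{\omega^{-1}}$, which are $\FF_p$-rational exactly when $p\equiv 1\pmod 3$. Next I would bring in the symplectic form $J$. Using $\rho(R)$-invariance of $J$ together with $\omega\neq -1$, pairing a vector of $V_{-1}$ against a vector of $W$ shows $V_{-1}\perp W$, so $J$ restricts to a nondegenerate symplectic form on each of the planes $V_{-1}$ and $W$; the same computation, using $\omega^2\neq 1$, shows that $V_\omega$ and $V_{\omega^{-1}}$ are isotropic, i.e.\ Lagrangian lines inside $W$. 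Because the projections onto $V_{-1}$ and $W$ are polynomials in $\rho(R)$, every $\rho(R)$-invariant subspace $L$ splits as $L = (L\cap V_{-1})\oplus(L\cap W)$, where $L\cap V_{-1}$ is an arbitrary subspace of $V_{-1}$ and $L\cap W$ is $0$ or $W$ when $p\equiv 2\pmod 3$ (so $\lambda^2-\lambda+1$ is irreducible over $\FF_p$), and is one of $0$, $V_\omega$, $V_{\omega^{-1}}$, $W$ when $p\equiv 1\pmod 3$.

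Then I would impose the Lagrangian condition $\dim L = 2$, $J|_L = 0$. Since $L\cap V_{-1}$ and $L\cap W$ are $J$-orthogonal, both summands must be isotropic, and as $J$ is nondegenerate on each plane neither summand can be the whole plane, so each is $0$ or a line. When $p\equiv 2\pmod 3$ there is no $\rho(R)$-invariant line in $W$, so no invariant $L$ of dimension $2$ can be isotropic and $\veps_3 = 0$. When $p\equiv 1\pmod 3$ the only options are $L = \ell\oplus V_\omega$ or $L = \ell\oplus V_{\omega^{-1}}$ with $\ell$ any line of $V_{-1}$ (automatically isotropic, being a line in a symplectic plane), giving $(p+1)$ choices of $\ell$ and two choices of eigenline, hence $\veps_3 = 2(p+1)$. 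Rewriting via $\left(\frac{-3}{p}\right) = \left(\frac{p}{3}\right)$, which is $1$ precisely when $p\equiv 1\pmod 3$, yields $\veps_3 = p+1+(p+1)\left(\frac{-3}{p}\right)$; and since the argument used only the characteristic polynomial of $\rho(R)$ and the form $J$, independence of $x$ and $y$ is automatic.

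I expect the only genuinely delicate point to be the verification that $J$ stays nondegenerate on $V_{-1}$ and on $W$ — equivalently, that $V_{-1}\perp W$ — but this drops out immediately from $\rho(R)$-invariance of $J$ and $\omega\neq -1$; everything else is bookkeeping about invariant subspaces of a semisimple transformation. As a sanity check one can note that this representation-theoretic count reproduces the pattern already visible for $\veps_2$ in Lemma~\ref{l:sgrassmanian}, where the same method (eigenspace decomposition of $\rho(S)$ under $J$) could be used in place of the explicit fixed-point computation.
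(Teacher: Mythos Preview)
Your argument is correct and complete. The only step stated without justification is the claim that the characteristic polynomial of $\rho(R)$ is a product of copies of $(\lambda+1)$ and $(\lambda^2-\lambda+1)$; this follows either because $\rho(R)$ is defined over $\QQ(x,y)$, which contains no primitive sixth root of unity, or from the palindromic shape of characteristic polynomials of symplectic matrices, and either observation is routine.

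Your route is genuinely different from the paper's. The paper does not count fixed Lagrangians directly. Instead it invokes Lemma~\ref{l:chardecomp}, which identifies the permutation character $\chi$ of $\Sp_4(\FF_p)$ on $X(\FF_p)$ as $1+\theta_9+\theta_{11}$ in Srinivasan's notation, and then determines which conjugacy class of $\Sp_4(\FF_p)$ contains $\rho(R)$ by matching the minimal polynomial $u^3+1$ against the list of classes. The value $\veps_3=\chi(\rho(R))$ is then read off from Srinivasan's character tables: $2p+2$ for the classes $C_3(i),C_3'(i)$ (which occur when $p\equiv 1\pmod 3$) and $0$ for $C_1(i),C_1'(i)$ (when $p\equiv 2\pmod 3$). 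Your approach replaces this appeal to the character table by a direct symplectic-geometry count on the eigenspace decomposition $V=V_{-1}\oplus W$, which is more elementary and self-contained. The paper's method, on the other hand, slots naturally into the framework already set up for the cusp computation in Proposition~\ref{p:cusps}, where evaluating $\chi$ on powers of $T$ via the same tables is the main tool; so their choice is one of uniformity rather than necessity.
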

\begin{proof}
We use the character $\chi=1+\theta_9+\theta_{11}$ defined earlier. In particular, $\varepsilon_3=\chi(R)$.
Note that the minimal polynomial (in variable $u$) of $TS$, which is conjugate to $R$ in $\Sp_4(V)$, is $u^3+1$. Since $\gamma^2 \neq \pm 1$ in $\FF_p$ for generators $\gamma$ of $\FF_p$ or $\FF_{p^2}$ , a straightforward computation shows that the only conjugacy classes in \cite{Srin} with minimal polynomial of degree $3$ are  $C_1(i), C_1'(i), C_3(i), C_3'(i)$, and $D_{21}, D_{22}, D_{23}, D_{24}$. We can exclude the classes $D_{21}, D_{22}, D_{23}, D_{24}$, whose minimal polynomials are fixed and do not equal $u^3+1$.

Note that the minimal polynomials of classes $C_1(i)$, $C_1'(i)$, $C_3(i)$ and $C_3'(i)$ have the general form $$u^3-\frac{\gamma^{2i}\pm \gamma^i+1}{\gamma}u^2\pm\frac{\gamma^{2i}\pm \gamma^i +1}{\gamma} \mp 1,$$ where $\gamma$ is either a generator for $\FF_{p^2}$ in case of conjugacy class $C_1$, or a generator for $\FF_p$ in case of class $C_3$. Clearly $R$ must belong to one of these conjugacy classes, and the minimal polynomial of at least one class must be $u^3+1$. Since $p \ne 2,3$, we have two cases. If $p \equiv 1 \pmod{3}$, the polynomial $v^2 \pm v+1$ has a root in $\FF_p$, and for no value of $\gamma$ as a generator of $\FF_{p^2}$ and $i$ as specified in \cite{Srin} is $\gamma^{2i}\pm \gamma^i +1 =0$. Therefore, $R$ must be in some class $C_3(i)$ or $C_3'(i)$, for which $\chi(C_3(i))=\chi(C_3'(i))=2p+2$.  If $p \equiv 2 \pmod{3}$, the root of the polynomial $v^2 \pm v+1$ must instead lie in $\FF_{p^2}$, so $R$ must be in some class $C_1(i)$ or $C_1'(i)$, for which $\chi(C_1(i))=\chi(C_1'(i))=0$. One can easily verify that these formulas agree with $p+1+(p+1) \left( \frac{-3}{p}\right)$ when $p > 3$. 
\end{proof}

\begin{thm}
  \label{t:genus}
  Suppose that $p$, $x$ and $y$ satisfy the hypotheses of Theorem \ref{t:sp4surject} and let $g$ be the genus of the curve defined by a point stabilizer for $\Gamma$ in its action on the symplectic Grassmanian $X(\FF_p)$ for a prime $p > 7$. Then
  \[
  g = \begin{cases}
    \frac{p^3 + p^2 - 22p - 76}{12} & p\equiv 1\pmod{12},\\
    \frac{p^3 + p^2 - 14p - 68}{12} & p\equiv 5\pmod{12},\\
 \frac{p^3 + p^2 - 22p - 70}{12} & p\equiv 7\pmod{12},\\
 \frac{p^3 + p^2 - 14p - 62}{12} & p\equiv 11\pmod{12}.
  \end{cases}
\]
\end{thm}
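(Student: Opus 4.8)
The plan is to apply the standard genus formula for finite-index subgroups of $\PSL_2(\ZZ)$. Note first that the action of $\Gamma=\SL_2(\ZZ)$ on the symplectic Grassmanian $X(\FF_p)$ factors through $\PSL_2(\ZZ)$, since $\rho_p(-1)=\rho_p(S)^2=-I$ acts trivially on subspaces; hence a point stabilizer $\Lambda$ contains $-1$, and the associated modular curve $X(\Lambda)$ is the quotient of $\uhp$ by the image of $\Lambda$ in $\PSL_2(\ZZ)$. If that image has index $\mu$, with $\varepsilon_2$ elliptic points of order $2$, $\varepsilon_3$ elliptic points of order $3$, and $\varepsilon_\infty$ cusps, then
\[
g = 1 + \frac{\mu}{12} - \frac{\varepsilon_2}{4} - \frac{\varepsilon_3}{3} - \frac{\varepsilon_\infty}{2}.
\]
So the proof is simply a matter of substituting into this formula the four quantities already computed above and carrying out a case analysis on $p$ modulo $12$.

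First I would record the inputs. By Theorem \ref{t:sp4surject}, $\rho_p$ is surjective onto $\Sp_4(\FF_p)$, which acts transitively on the set of Lagrangian subspaces of $\FF_p^4$ (Witt's theorem); hence the action of $\Gamma$ on $X(\FF_p)$ is transitive and the point stabilizer $\Lambda$ has index
\[
\mu = \abs{X(\FF_p)} = (p^2+1)(p+1) = p^3+p^2+p+1.
\]
The elliptic points of order $2$ (resp.\ $3$) of $X(\Lambda)$ correspond to the fixed points of $\rho_p(S)$ (resp.\ of $\rho_p(R)$, with $R=ST$ of order $3$) on $X(\FF_p)$, so by Lemma \ref{l:sgrassmanian} and Lemma \ref{l:rgrassmanian} we have $\varepsilon_2 = p+2+\left(\tfrac{-1}{p}\right)$ and $\varepsilon_3 = p+1+(p+1)\left(\tfrac{-3}{p}\right)$. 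Finally the cusps correspond to the cycles of $\rho_p(T)$ acting on $X(\FF_p)$, which number $\varepsilon_\infty = c = 2p+12$ by Proposition \ref{p:cusps}.

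It then remains only to split into residue classes modulo $12$, using that $\left(\tfrac{-1}{p}\right)=1$ exactly when $p\equiv 1\pmod 4$ and $\left(\tfrac{-3}{p}\right)=1$ exactly when $p\equiv 1\pmod 3$. Thus $\varepsilon_2 = p+3$ when $p\equiv 1\pmod 4$ and $\varepsilon_2=p+1$ when $p\equiv 3\pmod 4$, while $\varepsilon_3 = 2p+2$ when $p\equiv 1\pmod 3$ and $\varepsilon_3=0$ when $p\equiv 2\pmod 3$. Substituting into the genus formula and clearing denominators yields, for instance when $p\equiv 1\pmod{12}$,
\[
g = \frac{12 + (p^3+p^2+p+1) - 3(p+3) - 4(2p+2) - 6(2p+12)}{12} = \frac{p^3+p^2-22p-76}{12},
\]
and the remaining three congruence classes are handled in exactly the same way, giving the four stated expressions. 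There is no genuine obstacle in this final step — it is pure bookkeeping — so the real content of the theorem is carried by the earlier results: above all the cusp count of Proposition \ref{p:cusps} (which rests on the conjugacy-class analysis and the character decomposition $\chi = 1+\theta_9+\theta_{11}$ of Lemma \ref{l:chardecomp}) and the order-$3$ fixed-point count of Lemma \ref{l:rgrassmanian}.
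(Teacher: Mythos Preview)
Your proof is correct and follows essentially the same approach as the paper: both apply the standard genus formula $g = 1 + \mu/12 - \varepsilon_2/4 - \varepsilon_3/3 - c/2$ with the index $\mu=(p^2+1)(p+1)$ and the values of $\varepsilon_2$, $\varepsilon_3$, $c$ supplied by Lemmas \ref{l:sgrassmanian}, \ref{l:rgrassmanian}, and Proposition \ref{p:cusps}, then split according to the residue of $p$ mod $12$. Your extra remarks on why the action factors through $\PSL_2(\ZZ)$ and why transitivity (via Witt's theorem) gives the index are welcome additions but do not change the substance.
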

\begin{proof}
Since the group has index $(p^2+1)(p+1)$ in $\SL_2(\ZZ)$, we use the formula $$g=1+\frac{(p^2+1)(p+1)}{12}-\frac{\varepsilon_2}{4}-\frac{\varepsilon_3}{3}-\frac{c}{2}.$$ 
Putting together the formulas for $\varepsilon_2, \varepsilon_3$ and $c$ obtained in Lemmas   \ref{l:sgrassmanian},  \ref{l:cusps} and \ref{l:rgrassmanian}, we get 
$$ g = \frac{1}{12} \left( p^3+p^2-\left(18+4 \left( \frac{-3}{p}\right) \right) p -\left(69+3\left( \frac{-1}{p} \right) +4 \left( \frac{-3}{p} \right) \right) \right),$$ and  obtain the result by matching up with the four possible cases for $p$.
\end{proof}

The genus of the curves discussed in Theorem \ref{t:genus} are listed in Table \ref{tab:genus} for the first several primes.

\begin{table}[ht]
{\renewcommand{\arraystretch}{1.2}
\begin{tabular}{c|c } 
 Prime $p$&  Genus \\ 
  \hline
11 & 103  \\ 
  %\hline
13 & 167  \\ 
  %\hline
17 & 408 \\ 
  %\hline
19 &561 \\ 
  %\hline
23 & 1026 \\ 
  %\hline
29 & 2063 \\ 
  %\hline
31 &2500
\end{tabular}}
\caption{Genus values for small primes.}
\label{tab:genus}
\end{table}

\begin{rmk}
From the genus formula in Theorem \ref{t:genus}, we can see that for weights $k\ge 2$, the dimension of the corresponding spaces of modular forms grows on the  order $O(kp^3)$.
\end{rmk}

Finally, let us conclude this discussion by confirming that the groups considered above are indeed noncongruence.
\begin{thm}
Let $p$, $x$ and $y$ satisfy the hypotheses of Theorem \ref{t:sp4surject}. Let $H_p\subseteq \Gamma$ denote the inverse image under $\phi_p$ of a point stabilizer for the action of $\Sp_4(\FF_p)$ on the symplectic Grassmanian $X(\FF_p)$. Then $H_p$ is a noncongruence subgroup.
\end{thm}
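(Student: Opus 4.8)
The plan is to apply Wohlfahrt's theorem \cite[Theorem 1]{Wohlfahrt}. By the corollary above, the cusp widths of $H_p$ are $1$, $\tfrac{p-1}{2}$, $p$ and $\tfrac{p(p-1)}{2}$, so the Wohlfahrt level of $H_p$ is $N=\lcm\bigl(1,\tfrac{p-1}{2},p,\tfrac{p(p-1)}{2}\bigr)=\tfrac{p(p-1)}{2}$, and since $0<\tfrac{p-1}{2}<p$ we have $v_p(N)=1$. Wohlfahrt's theorem asserts that $H_p$ is congruence if and only if $\Gamma(N)\subseteq H_p$, so it suffices to rule out this inclusion.

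Assume for contradiction that $\Gamma(N)\subseteq H_p$. The one subtlety is that $H_p$ is not normal in $\Gamma$, so I would first pass to its image in $\Sp_4(\FF_p)$: write $P=\phi_p(H_p)$ for the point stabilizer (a Siegel parabolic), a proper subgroup of $\Sp_4(\FF_p)$. Since $\Gamma(N)$ is normal in $\Gamma$ and $\phi_p(\Gamma)=\Sp_4(\FF_p)$ by Theorem \ref{t:sp4surject}, the image $\phi_p(\Gamma(N))$ is a normal subgroup of $\Sp_4(\FF_p)$, and from $\Gamma(N)\subseteq H_p=\phi_p^{-1}(P)$ it lies inside $P$. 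As $p>7$, the group $\Sp_4(\FF_p)$ is quasisimple with center $\{\pm 1\}$, so its only normal subgroups are $\{1\}$, $\{\pm 1\}$ and $\Sp_4(\FF_p)$; being trapped in the proper subgroup $P$, we conclude $\phi_p(\Gamma(N))\subseteq\{\pm 1\}$. Consequently $\phi_p$ induces a surjection
\[
\SL_2(\ZZ/N\ZZ)\;\cong\;\Gamma/\Gamma(N)\;\twoheadrightarrow\;\Sp_4(\FF_p)/\{\pm 1\}\;=\;\PSp_4(\FF_p).
\]

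It then remains to extract a numerical contradiction. Writing $N=p\cdot\tfrac{p-1}{2}$ with $\gcd\bigl(p,\tfrac{p-1}{2}\bigr)=1$, the Chinese remainder theorem gives $\SL_2(\ZZ/N\ZZ)\cong\prod_{q^e\mid\mid N}\SL_2(\ZZ/q^e\ZZ)$. A nonabelian finite simple group that is a quotient of a direct product is a quotient of one of the factors (pairwise commuting normal subgroups generating a nonabelian group cannot all be nontrivial), so $\PSp_4(\FF_p)$ would be a quotient of some $\SL_2(\ZZ/q^e\ZZ)$ with $q^e\mid\mid N$; in particular $\abs{\SL_2(\ZZ/q^e\ZZ)}=q^{3e-2}(q^2-1)$ is divisible by $\abs{\PSp_4(\FF_p)}$, hence by $p^4$. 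If $q=p$ then $e=1$ because $v_p(N)=1$, so this order is $p(p^2-1)$, with $v_p$ equal to $1<4$; if $q\neq p$ then $q^e\mid\tfrac{p-1}{2}$, hence $q<p$ and the integers $q$, $q-1$, $q+1$ all lie strictly between $0$ and $p$, so $q^{3e-2}(q^2-1)$ is coprime to $p$. Either way $p^4$ cannot divide it, a contradiction; therefore $\Gamma(N)\not\subseteq H_p$ and $H_p$ is noncongruence. I do not expect a genuine obstacle here: the content is Wohlfahrt's criterion, the passage to the normal image in $\Sp_4(\FF_p)$ (which handles the non-normality of $H_p$), and elementary bookkeeping on the $p$-part of $\abs{\PSp_4(\FF_p)}$.
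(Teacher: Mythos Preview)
Your proof is correct and follows the same underlying strategy as the paper: invoke Wohlfahrt's theorem, use normality of $\Gamma(N)$ to pass from $H_p$ to something normal, and derive a size contradiction. Two minor points of comparison are worth noting.

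First, your Wohlfahrt level $N=\tfrac{p(p-1)}{2}$, read off from the cusp-width corollary, is the sharp value; the paper instead takes $N=p(p-1)$ (the order of $\phi_p(T)$), which is harmless since $\Gamma(p(p-1))\subseteq\Gamma\!\left(\tfrac{p(p-1)}{2}\right)$.

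Second, your endgame is more structural than necessary. The paper simply observes that if $\Gamma(N)\subseteq H_p$ then by normality $\Gamma(N)$ lies in every conjugate of $H_p$, hence in $\phi_p^{-1}(\{\pm I\})$, and then compares indices directly: $[\Gamma:\Gamma(N)]$ is $O(p^6)$ while $[\Gamma:\phi_p^{-1}(\{\pm I\})]=\abs{\PSp_4(\FF_p)}$ is $O(p^{10})$, an immediate contradiction. Your route via quasisimplicity, the CRT decomposition of $\SL_2(\ZZ/N\ZZ)$, and the fact that a nonabelian simple quotient of a product is a quotient of a factor reaches the same contradiction but with extra machinery. What your approach buys is a cleaner handling of the $\{\pm I\}$ ambiguity (the paper is slightly loose in identifying the intersection of conjugates of $H_p$ with $\ker\phi_p$ rather than $\phi_p^{-1}(\{\pm I\})$), at the cost of a longer argument where a single inequality of indices would do.
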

\begin{proof}
We have seen that since $\phi_p(T)$ has order $p(p-1)$, due to our hypotheses, the Wohlfahrt level is $p(p-1)$. Thus, if $H_p$ is congruence, we must have $\Gamma(p(p-1)) \subseteq H_p$. Since $\ker \phi_p$ is the intersection of the conjugates of $H_p$, this would imply that $\Gamma(p(p-1)) \subseteq \ker \phi_p$. But this is a contradiction as one sees by considering the indices of $\Gamma(p(p-1))$ and $\ker \phi_p$, which grow on the order of $p^6$ and $p^{10}$, respectively.
\end{proof}

\begin{rmk}
At this time we do not know how these groups behave as one fixes the prime $p$ but varies the primitive root $x$ modulo $p$. We can at least say that the groups are not necessarily conjugate. For example, for $p=11$ and $x=2$, we have $ST^{20}S^{-1}T^{33}S^{-1}T^{20}S^{-1}T^{33} \in \ker \phi_p$, but this is not true if $x=8$. Thus, the kernels in these two cases are distinct, and so a point stabilizer when $x=2$ cannot be conjugate to a point stabilizer when $x=8$. On the other hand, we can still ask the following: does one obtain groups that are equivalent under the action of the absolute Galois group $\Gal(\Qbar/\QQ)$ acting via outer automorphisms on the finite-index subgroups of $\Gamma$?
\end{rmk}

%Bibliography:
\bibliographystyle{plain}
\bibliography{Revised_families_of_subgroups}
\end{document}